\numberwithin{equation}{section}
\newcommand {\N}{\mathbb{N}}
\newcommand {\Z}{\mathbb{Z}}
\newcommand {\GG}{\mathcal{G}}
\newcommand {\G}{\mathcal{G}}
\newcommand {\F}{\mathbb{F}}
\newcommand{\lsp}{\operatorname{span}}
\newcommand {\al}{\alpha}
\newcommand{\p}{\mathfrak{p}}
\newcommand{\scj}{\subseteq}
\newcommand{\up}{\mathfrak{p}}
\newcommand{\Lc}{\operatorname{Lc}}
\newcommand{\la}[1]{\text{$\mathcal{#1}$}}
\newcommand{\lb}[1]{\text{$\mathscr{#1}$}}
\newcommand{\powerset}[1]{\text{$P(#1)$}}								
\newcommand{\eword}{\text{$\omega$}}											
\newcommand{\xia}{\text{$\xi^\alpha$}}											
\newcommand{\uset}[1]{\text{$\uparrow\hspace{-0.1cm}{#1}$}}				
\newcommand{\usetr}[2]{\text{$\uparrow_{\scriptscriptstyle{#2}}\hspace{-0.1cm}{#1}$}} 
\newcommand{\lset}[1]{\text{$\downarrow\hspace{-0.1cm}{#1}$}}				
\newcommand{\lsetr}[2]{\text{$\downarrow_{\scriptscriptstyle{#2}}\hspace{-0.1cm}{#1}$}} 
\newcommand{\dgraphg}[1]{\text{$\lb{#1}$}}									
\newcommand{\dgraphupleg}[3]{\text{$\dgraphg{#1}=(\dgraphg{#1}^0,\dgraphg{#1}^1,#2,#3)$}}	
\newcommand{\dgraph}{\text{$\dgraphg{E}$}}									
\newcommand{\dgraphuple}{\text{$\dgraphupleg{E}{r}{s}$}}					
\newcommand{\alfg}[1]{\text{$\lb{#1}$}}										
\newcommand{\acfg}[1]{\text{$\lb{#1}$}}										
\newcommand{\lbfg}[1]{\text{$\lb{#1}$}}										
\newcommand{\acfrg}[1]{\text{$\acfg{B}_{#1}$}}								
\newcommand{\alf}{\text{$\alfg{A}$}}											
\newcommand{\acf}{\text{$\acfg{B}$}}											
\newcommand{\lbf}{\text{$\lbfg{L}$}}											
\newcommand{\acfra}{\text{$\acfg{B}_{\alpha}$}}								
\newcommand{\lgraphg}[2]{\text{$(\dgraphg{#1},\lbfg{#2})$}}				
\newcommand{\lspaceg}[3]{\text{$(\dgraphg{#1},\lbfg{#2},\acfg{#3})$}}
\newcommand{\lgraph}{\text{$\lgraphg{E}{L}$}}								
\newcommand{\lspace}{\text{$\lspaceg{E}{L}{B}$}}							
\newcommand{\awsetg}[2]{\text{$\lbfg{#1}^{#2}$}}	
\newcommand{\awplus}{\text{$\awsetg{L}{\scriptscriptstyle{\geq 1}}$}}								
\newcommand{\awn}[1]{\text{$\awsetg{L}{#1}$}}								
\newcommand{\awstar}{\text{$\awsetg{L}{\ast}$}}							
\newcommand{\awinf}{\text{$\awsetg{L}{\infty}$}}							
\newcommand{\awleinf}{\text{$\awsetg{L}{\scriptscriptstyle{\leq\infty}}$}}					
\newcommand{\fset}[1]{\text{$\mathsf{#1}$}}										
\newcommand{\filt}{\text{$\fset{F}$}}											
\newcommand{\ftight}{\text{$\fset{T}$}}										
\newcommand{\ftightw}[1]{\text{$\fset{T}_{#1}$}}
\newcommand{\ftg}[1]{\text{$\la{#1}$}}											
\newcommand{\ft}{\text{$\ftg{F}$}}												
\newtheorem{teorema}[equation]{Theorem}
\newtheorem{theorem}[equation]{Theorem}
\newtheorem{lema}[equation]{Lemma}
\newtheorem{corolario}[equation]{Corollary}
\newtheorem{definicao}[equation]{Definition}
\newtheorem{proposicao}[equation]{Proposition}
\newtheorem{proposition}[equation]{Proposition}
\newtheorem{exemplo}[equation]{Example}
\newtheorem{remark}[equation]{Remark}
\title{Ultragraph algebras via labelled graph groupoids, with applications to generalized uniqueness theorems}
\author{Gilles G. de Castro\footnote{Partially supported by Capes-PrInt Brazil grant number 88881.310538/2018-01.}, Daniel Gon\c{c}alves\footnote{Partially supported by Conselho Nacional de Desenvolvimento Cient\'ifico e Tecnol\'ogico (CNPq) grant numbers 304487/2017-1 and 406122/2018-0  and Capes-PrInt grant number 88881.310538/2018-01 - Brazil.}, and Daniel W. van Wyk}
\begin{document}

\maketitle

\begin{abstract}
An ultragraph gives rise to a labelled graph with some particular properties. In this paper we describe the algebras associated to such labelled graphs as groupoid algebras. More precisely, we show that the known groupoid algebra realization of ultragraph C*-algebras is only valid for ultragraphs for which the range of each edge is finite, and we extend this realization to any ultragraph (including ultragraphs with sinks). Using our machinery, we characterize the  shift space associated to an ultragraph as the tight spectrum of the inverse semigroup associated to the ultragraph (viewed as a labelled graph). Furthermore, in the purely algebraic setting, we show that the algebraic partial action used to describe an ultragraph Leavitt path algebra as a partial skew group ring is equivalent to the dual of a topological partial action, and we use this to describe  ultragraph Leavitt path algebras as Steinberg algebras. Finally, we prove generalized uniqueness theorems for both ultragraph C*-algebras and ultragraph Leavitt path algebras and characterize their abelian core subalgebras.

\end{abstract}

\vspace{0.5pc}

{\bf Keywords:} Ultragraphs, C*-algebras, Leavitt path algebras, Steinberg algebras, Partial skew group rings, Labelled spaces.

{\bf MSC2020:} 16S88, 16S35, 46L55, 47L55, 22A22.  

\newpage

\tableofcontents

\newpage
\section{Introduction}

Ultragraphs are versatile types of labelled graphs: their combinatorial structure encodes elaborate symbolic dynamics and algebras, meanwhile the intuition from the graph context is often (but not always) preserved. Ultragraphs were originally defined in \cite{MR2050134}, as an unifying object to study Exel-Laca and graph C*-algebras. Since then, their study has intertwined Dynamics, Algebra and Analysis in ways that each area benefits from the other, see for example \cite{MR3600124, SoboG, gonccalves2018li, gonccalves2019ultragraph, tasca2020kms} where ultragraphs are used to study shift spaces over infinite alphabets, \cite{MR3856223, tasca2020kms} where KMS states associated to ultragraph C*-algebras are described, \cite{Larki} where purely infinite ultragraph C*-algebras are determined, \cite{GDD} where topological full groups associated to ultragraph groupoids are shown to be isomorphism invariants, \cite{imanfar2017leavitt} where ultragraph Leavitt path algebras are introduced, \cite{GRirred} where irreducible representations of ultragraph Leavitt path algebras are characterized, among many other developments. 

Since ultragraphs are labelled graphs (for which edges with the same label necessarily have the same source), their algebras share some of the intricacies of labelled graph algebras. Labelled graph C*-algebras were originally defined in \cite{MR2304922}, but the final definition was settled independently in \cite{MR3614028} and \cite{MR3680957}. Technically, this C*-algebra is associated with a labelled space, which is a labelled graph with an additional family of sets of vertices (see Section \ref{subsect.labelled.spaces}). 
In this paper we illustrate that the elaborate nature of labelled graphs (spaces) and their associated algebras manifests itself in ultragraphs, and we show when relevant concepts can be simplified for certain cases. 

To associate an algebra with an ultragraph, we use the strategy proposed for combinatorial algebras in \cite{MR2419901}. Specifically, for an ultragraph $\G$ an inverse semigroup is associated with $\G$.  The tight spectrum of this inverse semigroup is then used via a groupoid, or partial crossed product, construction as a building block for the C*-algebra or the Leavitt path algebra associated with $\G$. In \cite{MR3648984} an inverse semigroup is associated with a labelled graph and the tight spectrum of the inverse semigroup is characterized. Since ultragraphs are special cases of labelled graphs, we use the inverse semigroup of a labelled graph to study general ultragraphs. 
In \cite{MR3938320} a shift space is associated with an ultragraph $\G$ without sinks that satisfies Condition~(RFUM), and also in \cite{tasca2020kms} if $\G$ has sinks and satisfies Condition~(RFUM2). For these special cases we show that shift spaces coincide with the tight spectrum of the inverse semigroup associated with $\G$ (see Theorem~\ref{thm:ftight.homeo.X}). 
However, we find that the ``graph'' like picture of a shift space (using paths and cylinder sets) associated to an ultragraph satisfying Condition~(RFUM2) is not available for general ultragraphs.

 
The tight spectrum of the inverse semigroup mentioned above is in fact the unit space of a groupoid used to realize an ultragraph C*-algebra as a groupoid C*-algebra. In \cite{MR2457327} an inverse semigroup is associated with an ultragraph to also form a groupoid such that the groupoid C*-algebra is isomorphic to the ultragraph C*-algebra.  However, we point out that our description of the unit space differs from that of \cite{MR2457327}, and we allow for sinks. Essentially, comparing these groupoids, the unit space in  \cite{MR2457327} lacks certain elements. We elaborate in detail on this in Remarks~\ref{rmk:filters.finite.type} and \ref{rmk:ultraset}, and in Remark~\ref{rmk:MM.groupoid} we discuss some implications to the associated groupoid C*-algebra. In order for the description in \cite{MR2457327} to work, an extra assumption that the range of every edge is finite is needed, see Remark~\ref{rmk:ultraset}. 

Often when a combinatorial object has a C*-algebra associated with it, one can also associate an algebra over a ring with it. If a groupoid description of this C*-algebra is obtained, then it is natural to try to obtain a Steinberg algebra realization of the algebra. Since the theory of Steinberg algebras is quite developed (see \cite{CEM, center, Ben} for a few examples), and closely parallels the theory of groupoid C*-algebras, such realizations open 
the possibility of many advances. In the case of graphs, the groupoid used for modelling Leavitt path algebras and graph C*-algebras is the same. We show that this is also the case for ultragraphs\footnote{While we were in the final organizing phase of this paper, the manuscript \cite{HazNam} was posted on arXiv, with a Steinberg algebra realization of ultragraph Leavitt path algebras. Nevertheless the groupoip presented there is the same as the groupoid introduced in \cite{MR2457327}, and so their approach needs the extra assumption that the range of each edge is finite.},  without any assumptions on the ultragraph. 

To obtain a Steinberg algebra realization of an ultragraph Leavitt path algebra, we use the description of the latter as partial skew group ring \cite{goncalves_royer_2019} and the results of \cite{MR3743184}. With this approach we develop some new tools along the way which may be of independent interest. However, we need to overcome a few hurdles. First, we need to reconcile the two running definitions of ultragraph Leavitt path algebras. To describe this issue, recall that an ultragraph is a quadruple $\mathcal{G}=(G^0, \mathcal{G}^1, r,s)$ consisting of two countable sets $G^0$ and $\mathcal{G}^1$, a map $s:\mathcal{G}^1 \to G^0$, and a map $r:\mathcal{G}^1 \to P(G^0)\setminus \{\emptyset\}$, where $P(G^0)$ is the power set of $G^0$. A key concept in the definition of ultragraph algebras is that of a generalized vertex. Originally in \cite{MR2050134} the set of generalized vertices
of an ultragraph $ \mathcal{G}$ is defined as the smallest subset of $P(G^0)$ that contains $\{v\}$ for all $v\in G^0$, contains $r(e)$ for all $e\in \mathcal{G}^1$, and is closed under finite unions and nonempty finite intersections. 
However, another definition also appears in the literature. For example, in \cite{MR2413313} the set of generalized vertices, in addition to the original definition, is also taken to be closed under relative complements. However, it is proved that the C*-algebras arising from both definitions are isomorphic (see Remark~2.5 and Section 3 of \cite{MR2413313}). In the algebraic setting, ultragraph Leavitt path algebras are defined in \cite{imanfar2017leavitt} by also including relative complements in the generalized vertices (this convention is also used in \cite{Firrisa}). In other contexts the definition of generalized vertices does not allow for the relative complements (see \cite{GRirred, reduction, goncalves_royer_2019, HazNam, Nam}).  
Analogous to the C*-algebraic case, we show that ultragraph Leavitt path algebras arising from both definitions agree, see Proposition~\ref{prop:isom.leavitt.path.algs}. Then we have the realization of any ultragraph Leavitt path algebra as a partial skew group ring, as given in \cite{goncalves_royer_2019}, at our disposal. This bring us to the second hurdle to overcome. We show that the purely algebraic partial action constructed in \cite{goncalves_royer_2019} from an ultragraph can be built from the topological partial action defined on the tight spectrum of the inverse semigroup associated with the ultragraph in \cite{MR4109095}, see Theorem~\ref{prop:equivalent.partial.actions}. We can then build the transformation groupoid of the topological partial action and use the isomorphism between the Steinberg algebra and the partial skew group ring found in \cite{MR3743184} (see Theorem~\ref{kart}).

To exemplify the power of realizing ultragraph algebras as groupoid algebras we apply our results to obtain generalized uniqueness theorems to both ultragraph C*-algebras and ultragraph Leavitt path algebras. Recall that a uniqueness theorem 
means a set of conditions on a combinatorial object (in our case, an ultragraph) or on a representation of the associated algebra, guaranteeing that the representation is injective (see \cite{saragab} for a nice description of the development of the subject in the context of C*-algebras). 
In our context, after identifying the abelian core subalgebra of an ultragraph algebra, we apply general groupoid C*-algebras and Steinberg algebra results \cite{BNRSW,CEP} to show that a representation of an ultragraph algebra is injective if, and only if, its restriction to the abelian core subalgebra is injective, see Theorem~\ref{pizza} and Theorem~\ref{uniqueultra}. We remark that our results do not require aperiodicity, a gauge invariance or a $\Z$-graded assumption. Finally, our identification of the abelian core subalgebra of an ultragraph Leavitt path algebra allows us to apply results of \cite{HazratLi} to describe the core as the centralizer of the diagonal subalgebra, and to show that if the center of an ultragraph Leavitt path algebra is equal to its abelian core then the ultragraph is either a single vertex or a vertex with a loop (in the context of Leavitt path algebras this question is posed in \cite{GILCANTO2018227} and answered in \cite{HazratLi}).

We organized the paper in a manner that (we hope) appeals to an audience with different backgrounds and interests. In Section~\ref{subsection:filters.and.characters} we recall the necessary concepts regarding filters, labelled spaces, the key inverse semigroup associated to a labelled space and its tight spectrum. In Section~\ref{groupoids} we recall the definition of the shift space associated to an ultragraph that satisfies Condition~(RFUM2) and, in Theorem~\ref{thm:ftight.homeo.X}, we show that it is homeomorphic to the tight spectrum of the associated inverse semigroup. As a consequence we obtain that the ``graph like'' groupoids studied in \cite{GDD, tasca2020kms}  coincide with the groupoid arising from the labelled space perspective. Furthermore, we provide examples of ultragraphs for which the tight spectrum of the inverse semigroup associated to it cannot be described in the same manner as the shift defined for ultragraphs with Condition~(RFUM2). Section~4 is independent of Section~\ref{groupoids}. Here we show that the algebraic partial action defined in \cite{goncalves_royer_2019} agrees with the topological partial action associated to an ultragraph defined in \cite{MR4109095}. We devote Section~5 to reconciling the different definitions of an ultragraph Leavitt path algebra (Proposition~\ref{prop:isom.leavitt.path.algs}) and to the realization of ultragraph algebras as groupoid algebras (C*-algebraic - Theorem~\ref{thm:isom.cstar.algs}, purely algebraic - Theorem~\ref{kart}) using the groupoid described in Section~\ref{groupoids} and, in the algebraic case, the results of Section~4. Finally, in Section~6, we apply our results to obtain generalized uniqueness theorems for ultragraph algebras, both in the analytical (Theorem~\ref{pizza}) and algebraic context (Theorem~\ref{uniqueultra}), and describe the abelian core subalgebra of an ultragraph algebra (Proposition~\ref{viajar}).

\section{Preliminaries}

Throughout this paper we let $\N$ and $\N^*$ denote the set of non-negative integers, and positive integers, respectively.


\subsection{Filters}\label{subsection:filters.and.characters}

A \emph{filter} in a partially ordered set $P$ with least element $0$ is a subset $\xi$ of $P$ such that (i) $0\notin\xi$; (ii) if $x\in\xi$ and $x\leq y$, then $y\in\xi$ and (iii) if $x,y\in\xi$, there exists $z\in\xi$ such that $z\leq x$ and $z\leq y$. If $P$ is a (meet) semilattice, condition (iii) may be replaced by $x\wedge y\in\xi$ if $x,y\in\xi$. An \emph{ultrafilter} is a filter which is not properly contained in any filter. 

For  $x\in P$, we define \[\uset{x}=\{y\in P \ | \ x\leq y\}\ \ ,\ \  \lset{x}=\{y\in P \ | \ y\leq x\},\] and for subsets $X,Y$ of $P$ define \[\uset{X} = \bigcup_{x\in X}\uset{x} = \{y\in P \ | \ x\leq y \ \mbox{for some} \ x\in X\},\]
and $\usetr{X}{Y} = Y\cap\uset{X}$; the sets $\usetr{x}{Y}$, $\lsetr{x}{Y}$, $\lset{X}$ and $\lsetr{X}{Y}$ are defined analogously. A filter $\xi$ is called \emph{principal} if $\xi=\uset{x}$ for some $x\in P$.




If $\xi$ is a filter in a lattice $L$ with least element $0$, we say that $\xi$ is \emph{prime} if for every $x,y\in L$, if $x\vee y\in \xi$, then $x\in\xi$ or $y\in\xi$.

We  consider a \emph{Boolean algebra} to be a relatively complemented distributive lattice with least element 0. We do not assume that Boolean algebras have a greatest element. 

The following result is well known in order theory.

\begin{proposition}\label{prop:ultrafilter}
	Let $\xi$ be a filter in a Boolean algebra $\acf$. Then the following are equivalent:
	\begin{itemize}
	    \item $\xi$ is an ultrafilter,
	    \item $\xi$ is a prime filter,
	    \item if $x\in\acf$ is such that $x\wedge y\neq 0$ for all $y\in\xi$, then $x\in\xi$.
	\end{itemize}
	
\end{proposition}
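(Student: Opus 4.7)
I would establish the three equivalences cyclically: $(1)\Rightarrow(3)\Rightarrow(2)\Rightarrow(1)$. Since this is folklore, the only real care needed is that the paper's Boolean algebras are not required to have a greatest element, so all complements must be taken relatively, inside some element already known to live in $\xi$.

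For $(1)\Rightarrow(3)$, assume $\xi$ is an ultrafilter and that $x\wedge y\neq 0$ for every $y\in\xi$. I would enlarge $\xi$ by setting
\dmal{\eta=\{z\in\acf \ | \ z\geq x\wedge y \ \text{for some } y\in\xi\}.}
The hypothesis on $x$ guarantees $0\notin\eta$; upward closure is immediate, and closure under meets follows because $(x\wedge y_1)\wedge(x\wedge y_2)=x\wedge(y_1\wedge y_2)$ with $y_1\wedge y_2\in\xi$. Clearly $\xi\subseteq\eta$ and $x\in\eta$, so maximality of $\xi$ forces $\eta=\xi$ and hence $x\in\xi$.

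For $(3)\Rightarrow(2)$, suppose $x\vee y\in\xi$ but $x,y\notin\xi$. Applying (3) (contrapositively) to both $x$ and $y$ gives $a,b\in\xi$ with $x\wedge a=0$ and $y\wedge b=0$. Then $a\wedge b\in\xi$, so $(x\vee y)\wedge(a\wedge b)\in\xi$; by distributivity this element equals $(x\wedge a\wedge b)\vee(y\wedge a\wedge b)=0$, contradicting $0\notin\xi$.

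For $(2)\Rightarrow(1)$, assume $\xi$ is prime and suppose, for contradiction, that $\xi\subsetneq\eta$ for some filter $\eta$; fix $x\in\eta\setminus\xi$ and any $y\in\xi\subseteq\eta$. Since $\eta$ is a filter we have $x\wedge y\in\eta$, in particular $x\wedge y\neq 0$. Using relative complementation inside $y$, pick $z\leq y$ with $z\vee(x\wedge y)=y$ and $z\wedge(x\wedge y)=0$; the latter forces $z\wedge x=0$. Primeness applied to $y=(x\wedge y)\vee z\in\xi$ yields either $x\wedge y\in\xi$ (whence $x\in\xi$ by upward closure, contradicting the choice of $x$) or $z\in\xi\subseteq\eta$ (whence $z\wedge x=0\in\eta$, contradicting $0\notin\eta$). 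Either alternative is impossible, so $\xi$ is maximal.

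The one subtle point, and the only step I expect to require thought rather than bookkeeping, is $(2)\Rightarrow(1)$: without a global top element in $\acf$ one cannot speak of the complement of $x$ outright, so the argument must be localized by complementing $x\wedge y$ inside the chosen witness $y\in\xi$. Everything else is a direct application of distributivity and the filter axioms.
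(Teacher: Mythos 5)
The paper states this proposition without proof, labelling it as well known in order theory, so there is no in-text argument to compare against. Your cyclic proof $(1)\Rightarrow(3)\Rightarrow(2)\Rightarrow(1)$ is correct: the enlarged filter $\eta$ in the first step is checked properly, the distributivity computation in the second step is right, and in the third step you correctly localize the complement of $x$ inside a witness $y\in\xi$ (noting $z\wedge x = z\wedge x\wedge y = 0$ since $z\leq y$), which is exactly the point that needs care given that the paper's Boolean algebras need not have a greatest element. This is a complete and valid justification of the statement the paper leaves to the reader.
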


We briefly describe the Stone duality. For a Boolean algebra $\acf$, the set of all ultrafilters in $\acf$ will be denoted by $\widehat{\acf}$. For each $x\in \acf$, we let $U_x=\{\xi\in\widehat{\acf}\mid x\in\xi\}$. Then the family $\{U_x\}_{x\in\xi}$ is a basis of compact-open sets for a Hausdorff topology on $\widehat{\acf}$. The set $\widehat{\acf}$ with this topology is called the Stone dual of $\acf$. On the other hand, if $X$ is a Hausdorff space such the set of all compact-open sets $\acfg{K}(X)$ is a basis for the topology on $X$, then $\acfg{K}(X)$ is a Boolean algebra such that $\widehat{\acfg{K}(X)}$ is homeomorphic to $X$.

\subsection{Algebras of sets via characteristic functions}

Let $\acfg{C}$ be a lattice of subsets of a set $X$. In this subsection, we characterize the algebra generated by the characteristic functions (taking values in a commutative unital ring) of elements from $\acfg{C}$ as an universal algebra. This will be important in Section~\ref{repolho}, in the study of ultragraph Leavitt path algebras. However, since this result is independent from ultragraph algebras, we present it here.

Let $X$ be a set, $R$ a commutative unital ring, $\acfg{C}\subset\powerset{X}$ be a family of subsets of $X$, $F(X)$ be the $R$-algebra of functions from $X$ to $R$ with pointwise operations, $F_{\acfg{C}}(X)$ be the subalgebra generated by $\{1_C\}_{C\in\acfg{C}}$, where $1_C$ represents the characteristic function of the set $C$, and let $\acfg{A}_{\acfg{C}}$ be the algebra of sets generated by $\acfg{C}$.

\begin{lema}\label{lem:boolean.algebra.function}
We have that
\[\acfg{A}_{\acfg{C}}=\{A\scj X\mid 1_A\in F_{\acfg{C}}(X)\}.\]
In particular $F_{\acfg{C}}(X)= F_{\acfg{A}_{\acfg{C}}}(X)$.
\end{lema}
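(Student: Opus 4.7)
My plan is to prove $\mathcal{A}_{\mathcal{C}} = \{A\subseteq X : 1_A \in F_{\mathcal{C}}(X)\}$ by double inclusion, using the standard identities between pointwise operations on characteristic functions and set-theoretic operations.

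First, I would define $\mathcal{B} = \{A\subseteq X : 1_A \in F_{\mathcal{C}}(X)\}$ and verify that $\mathcal{B}$ is an algebra of sets containing $\mathcal{C}$. Since $1_{A\cap B} = 1_A\cdot 1_B$, $1_{A\cup B} = 1_A + 1_B - 1_A 1_B$ and $1_{A\setminus B} = 1_A - 1_A 1_B$, the family $\mathcal{B}$ is closed under finite intersections, finite unions, and relative complements. Clearly $\mathcal{C}\subseteq\mathcal{B}$ and $\emptyset\in\mathcal{B}$ (with $1_\emptyset = 0$), so by minimality $\mathcal{A}_{\mathcal{C}}\subseteq\mathcal{B}$.

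The reverse inclusion is the substantive direction. Take $A\in\mathcal{B}$, so $1_A$ is a finite $R$-linear combination of finite products of the form $1_{C_1}\cdots 1_{C_k} = 1_{C_1\cap\cdots\cap C_k}$. Hence we can write $1_A = \sum_{i=1}^{n} r_i\, 1_{A_i}$, where each $A_i$ is a finite intersection of elements of $\mathcal{C}$, so in particular $A_i\in\mathcal{A}_{\mathcal{C}}$. For each nonempty $S\subseteq\{1,\dots,n\}$ set
\[
B_S \;=\; \Bigl(\bigcap_{i\in S} A_i\Bigr)\setminus\Bigl(\bigcup_{j\notin S} A_j\Bigr),
\]
so that the $B_S$ are pairwise disjoint and partition $\bigcup_i A_i$; in particular $B_S\in\mathcal{A}_{\mathcal{C}}$ since $\mathcal{A}_{\mathcal{C}}$ is closed under finite intersections, finite unions and relative complements. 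Rewriting $1_{A_i} = \sum_{S\ni i} 1_{B_S}$ gives $1_A = \sum_{S} c_S\, 1_{B_S}$ with $c_S = \sum_{i\in S} r_i\in R$.

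Because the $B_S$ are disjoint, evaluating $1_A$ at any point $x\in B_S$ yields $c_S$; since $1_A$ takes only the values $0$ and $1$, we conclude $c_S\in\{0,1\}$ for every $S$ with $B_S\neq\emptyset$. Moreover $1_A$ vanishes outside $\bigcup_i A_i$, so
\[
A \;=\; \bigsqcup\{B_S : c_S = 1\},
\]
which lies in $\mathcal{A}_{\mathcal{C}}$, completing the reverse inclusion. Finally, the ``in particular'' assertion $F_{\mathcal{C}}(X) = F_{\mathcal{A}_{\mathcal{C}}}(X)$ is immediate: $\mathcal{C}\subseteq\mathcal{A}_{\mathcal{C}}$ gives one inclusion, and for the other we have just shown that $1_A\in F_{\mathcal{C}}(X)$ for every $A\in\mathcal{A}_{\mathcal{C}}$, so the generators of $F_{\mathcal{A}_{\mathcal{C}}}(X)$ all lie in $F_{\mathcal{C}}(X)$.

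The only genuinely delicate point is the step where one concludes $c_S\in\{0,1\}$ in an arbitrary commutative unital ring; this is clean because disjointness of the $B_S$ forces the value of $1_A$ on a point of $B_S$ to equal $c_S$ with no contribution from the other summands, and the hypothesis that $1_A$ is a $\{0,1\}$-valued function does the rest. No case analysis or induction on the structure of words in $\{1_C\}_{C\in\mathcal{C}}$ is needed.
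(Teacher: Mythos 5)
Your proof is correct and follows essentially the same route as the paper's: define the candidate algebra of sets, use minimality for one inclusion, and for the other disjointify the $A_i$'s so that the coefficients of the resulting linear combination are forced into $\{0,1\}$ by evaluating the $\{0,1\}$-valued function $1_A$ on each disjoint piece. You merely make explicit the disjointification ($B_S$) and the coefficient argument that the paper compresses into ``by basic set theory.''
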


\begin{proof}
Define $\acf=\{A\scj X\mid 1_A\in F_{\acfg{C}}(X)\}$. It is straightforward to check that $\acf$ is an algebra of sets containing $\acfg{C}$ so that $\acfg{A}_{\acfg{C}}\scj \acf$. Let $B\in \acf$ so that $1_B$ is a linear combination of some $1_{C_1}\,\ldots,1_{C_n}$ where $C_i$ is a finite intersection of elements of $\acfg{C}$ for $i=1,\ldots,n$. By basic set theory, we can find a family of mutually disjoint sets  $A_1,\ldots,A_m$ in $\acfg{A}_{\acfg{C}}$ such that each $C_i$ is a finite union of some elements of this family. We can then rewrite $1_B$ as a linear combination of $1_{A_1},\ldots,1_{A_m}$ where each coefficient is either 0 or 1. If follows that $B$ is the union of the $A_j$'s whose coefficient is 1 and hence $B\in\acfg{A}_{\acfg{C}}$.

For the last part, the inclusion $F_{\acfg{C}}(X)\scj F_{\acfg{A}_{\acfg{C}}}(X)$ is immediate and the reverse inclusion follows from the argument above.
\end{proof}

\begin{definicao}\label{def:univ.algebra.lattice.sets}
Let $\acfg{C}$ be a lattice of sets. We define $L_{\acfg{C}}$ as the universal $R$-algebra generated by a family of idempotents $\{p_A\}_{A\in \acfg{C}}$ subject to the relations $p_{\emptyset}=0$, $p_{A\cap B}=p_{A}p_{B}$ and $p_{A\cup B}=p_A+p_B-p_{A\cap B}$ for all $A,B\in \acfg{C}$.
\end{definicao}





\begin{proposicao}\label{prop:univ.algebra.lattice.sets}
Let $\acfg{C}$ be a lattice of subsets of $X$. Then there exists an isomorphism $\phi:L_{\acfg{C}}\to F_{\acfg{C}}(X)$ such that $\phi(p_A)=1_A$ for all $A\in \acfg{C}$.
\end{proposicao}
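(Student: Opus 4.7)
The plan is to first obtain $\phi$ from the universal property of $L_{\acfg{C}}$, which requires checking that the characteristic functions $\{1_A\}_{A\in\acfg{C}}$ are idempotents in $F_{\acfg{C}}(X)$ satisfying the three defining relations $1_\emptyset=0$, $1_{A\cap B}=1_A 1_B$ and $1_{A\cup B}=1_A+1_B-1_{A\cap B}$; all of these are immediate. Surjectivity of $\phi$ is automatic, since $F_{\acfg{C}}(X)$ is by construction generated by $\{1_A\}_{A\in\acfg{C}}$. So the content is injectivity.

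I first observe that $L_{\acfg{C}}$ is commutative (from $p_A p_B = p_{A\cap B} = p_B p_A$) and that, since $\acfg{C}$ is closed under finite intersections, every element of $L_{\acfg{C}}$ can be written as an $R$-linear combination $\sum_{i=1}^n r_i\, p_{A_i}$ with $A_i\in\acfg{C}$. An easy induction from $p_{A\cup B}=p_A+p_B-p_{A\cap B}$ gives the inclusion--exclusion identity
\[
p_{A_1\cup\cdots\cup A_n} \;=\; \sum_{\emptyset\neq T\subseteq\{1,\ldots,n\}}(-1)^{|T|+1}\, p_{\bigcap_{i\in T}A_i}
\]
inside $L_{\acfg{C}}$, which is the main computational tool.

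Fix $A_1,\ldots,A_n\in\acfg{C}$, set $U=A_1\cup\cdots\cup A_n$ (so $U\in\acfg{C}$), and for each nonempty $J\subseteq\{1,\ldots,n\}$ mimic the set-theoretic atom $B_J=\bigcap_{i\in J}A_i\setminus\bigcup_{i\notin J}A_i$ by the element
\[
q_J \;=\; \prod_{i\in J}p_{A_i}\cdot\prod_{i\notin J}\bigl(p_U-p_{A_i}\bigr)\in L_{\acfg{C}}.
\]
Since $p_U p_{A_i}=p_{A_i}$ for each $i$, the factors commute and each $q_J$ is an idempotent; moreover $q_J q_{J'}=0$ when $J\neq J'$ (any $i\in J\triangle J'$ contributes a pair $p_{A_i}(p_U-p_{A_i})=0$). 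Multiplying $p_{A_k}$ into the product decomposition of $p_U = \prod_i(p_{A_i}+(p_U-p_{A_i}))$ (expanded and then truncated via the same orthogonality) gives $p_{A_k}=\sum_{J\ni k}q_J$.

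The crux is the following claim: \emph{if $B_J=\emptyset$ as a subset of $X$, then $q_J=0$ in $L_{\acfg{C}}$}. Expanding the defining product yields
\[
q_J \;=\; \sum_{S\subseteq\{1,\ldots,n\}\setminus J}(-1)^{|S|}\, p_{C\cap\bigcap_{i\in S}A_i}, \qquad C=\bigcap_{i\in J}A_i,
\]
with the $S=\emptyset$ term being $p_C$. The hypothesis $B_J=\emptyset$ means $C\subseteq\bigcup_{i\notin J}A_i$, so $p_C = p_C\cdot p_{\bigcup_{i\notin J}A_i}$; applying the inclusion--exclusion identity to $p_{\bigcup_{i\notin J}A_i}$ and multiplying through by $p_C$ gives exactly the negative of the sum of the $S\neq\emptyset$ terms, so $q_J=0$. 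This is the step I expect to be the main obstacle, since everything else is either the universal property, orthogonality bookkeeping, or the basic relations.

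Finally, given $\alpha=\sum_i r_i\, p_{A_i}$ with $\phi(\alpha)=0$, the identity $p_{A_k}=\sum_{J\ni k}q_J$ rewrites $\alpha = \sum_{J\neq\emptyset}\bigl(\sum_{i\in J}r_i\bigr)q_J$, and by the key claim the sum may be restricted to those $J$ with $B_J\neq\emptyset$. Applying $\phi$ gives $0=\sum_{J:B_J\neq\emptyset}\bigl(\sum_{i\in J}r_i\bigr)1_{B_J}$, and since the nonempty $B_J$ are pairwise disjoint the characteristic functions $1_{B_J}$ are linearly independent in $F(X)$; hence $\sum_{i\in J}r_i=0$ for each such $J$, whence $\alpha=0$. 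This proves injectivity and completes the proof that $\phi$ is an isomorphism.
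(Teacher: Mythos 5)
Your proof is correct and follows essentially the same route as the paper: your idempotents $q_J=\prod_{i\in J}p_{A_i}\prod_{i\notin J}(p_U-p_{A_i})$ coincide with the paper's elements $p_I=p_{A_I}-p_{A_I\cap B_I}$, and both arguments reduce injectivity to the vanishing of those atoms whose set-theoretic counterpart is empty, using orthogonality to isolate each coefficient. The only cosmetic difference is that the paper obtains that vanishing almost for free from the observation that $1_A=1_{A\cap B}$ forces $A=A\cap B$ and hence $p_A=p_{A\cap B}$ as identical generators, whereas you derive it by an inclusion-exclusion computation.
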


\begin{proof}
By the universal property of $L_{\acfg{C}}$, there exists a $R$-algebra homomorphism $\phi:L_{\acfg{C}}\to F(X)$ such that $\phi(p_A)=1_A$ for all $A\in\acfg{C}$. Notice that $\phi(L_{\acfg{C}})=F_{\acfg{C}}(X)$, so it remains to show that $\phi$ is injective. We claim that for $A,B\in\acfg{C}$, we have that $p_A=p_{A\cap B}$ if and only if $1_A=1_{A\cap B}$. Indeed, on the one hand, if $p_A=p_{A\cap B}$ then $1_A=\phi(p_A)=\phi(p_{A\cap B})=1_{A\cap B}$. On the other hand, if $1_A=1_{A\cap B}$ then $A=A\cap B$ so that $p_A=p_{A\cap B}$.

Take $x\in L_{\acfg{C}}$ and suppose that $\phi(x)=0$. Since $\acfg{C}$ is closed under intersections, we have that $L_{\acfg{C}}=\lsp \{p_A\}_{A\in\acfg{C}}$, so we can write $x=\sum_{i=1}^n r_ip_{A_i}$ for some $n\in\N^*$ and $r_i\in R$, $A_i\in\acfg{C}$ for all $i=1,\ldots,n$. For each $\emptyset\neq I\scj\{1,\ldots,n\}$, we let $A_{I}=\bigcap_{i\in I}A_i$, $B_{I}=\bigcup_{i\in I^c}A_i$ and $p_I=p_{A_I}-p_{A_I\cap B_I}$. Straightforward computations show that $p_I$ is idempotent for all $I$, that $p_Ip_J=0$ if $I\neq J$ and that $p_{A_i}=\sum_{i\in I}p_I$ for all $i$. By the above claim $p_I=0$ if and only if $A_I=A_I\cap B_I$. Let $\Lambda=\{I\scj\{1,\ldots,n\}\mid p_I\neq 0\}$ so that we can write $x=\sum_{I\in \Lambda}s_Ip_I$, where $s_I\in R$ for all $I\in\Lambda$. For a fixed $J\in\Lambda$, we have that
\[0=\phi(x)=\phi(p_J)\phi(x)=\phi(p_Jx)=\phi(s_Jp_J)=s_J(1_{A_J}-1_{A_J\cap B_J})=s_J 1_{A_J\setminus B_J}.\]
Since $J\in\Lambda$, we have that $p_J\neq 0$, so that $A_J\neq A_J\cap B_J$, and hence $A_J\setminus B_J\neq\emptyset$. The above equality then implies $s_J=0$ and since $J\in\Lambda$ was arbitrary, we have that $x=0$. The injectivity of $\phi$ follows.
\end{proof}

\subsection{Labelled spaces and ultragraphs}\label{subsect.labelled.spaces}

A \emph{(directed) graph} $\dgraphuple$ consists of non-empty sets $\dgraph^0$ (of \emph{vertices}), $\dgraph^1$ (of \emph{edges}), and \emph{range} and \emph{source} functions $r,s:\dgraph^1\to \dgraph^0$. A vertex $v$ such that $s^{-1}(v)=\emptyset$ is called a \emph{sink}, and the set of all sinks is denoted by $\dgraph^0_{sink}$. The graph is countable if both $\dgraph^0$ and $\dgraph^1$ are countable.

A \emph{path of length $n$} on a graph $\dgraph$ is a sequence $\lambda=\lambda_1\lambda_2\ldots\lambda_n$ of edges such that $r(\lambda_i)=s(\lambda_{i+1})$ for all $i=1,\ldots,n-1$. We write $|\lambda|=n$ for the length of $\lambda$ and regard vertices as paths of length $0$. $\dgraph^n$ stands for the set of all paths of length $n$ and $\dgraph^{\ast}=\cup_{n\geq 0}\dgraph^n$. Similarly, we define a \emph{path of infinite length} (or an \emph{infinite path}) as an infinite sequence $\lambda=\lambda_1\lambda_2\ldots$ of edges such that $r(\lambda_i)=s(\lambda_{i+1})$ for all $i\geq 1$; for such a path, we write $|\lambda|=\infty$ and we let $\dgraph^{\infty}$ denote the set of all infinite paths.

A \emph{labelled graph} consists of a graph $\dgraph$ together with a surjective \emph{labelling map} $\lbf:\dgraph^1\to\alf$, where $\alf$ is a fixed non-empty set, called an \emph{alphabet}, and whose elements are called \emph{letters}. $\alf^{\ast}$ stands for the set of all finite \emph{words} over $\alf$, together with the \emph{empty word} \eword, and $\alf^{\infty}$ is the set of all infinite words over $\alf$. 	We consider $\alf^{\ast}$ as a monoid with operation given by concatenation. In particular, given $\alpha\in \alf^{\ast}\setminus\{\eword\}$ and $n\in\N^*$, $\alpha^n$ represents $\alpha$ concatenated $n$ times and $\alpha^{\infty}\in \alf^{\infty}$ is $\alpha$ concatenated infinitely many times.

The labelling map $\lbf$ extends in the obvious way to $\lbf:\dgraph^n\to\alf^{\ast}$ and $\lbf:\dgraph^{\infty}\to\alf^{\infty}$. $\awn{n}=\lbf(\dgraph^n)$ is the set of \emph{labelled paths $\alpha$ of length $|\alpha|=n$}, and $\awinf=\lbf(\dgraph^{\infty})$ is the set of \emph{infinite labelled paths}. We consider $\eword$ as a labelled path with $|\eword|=0$, and set $\awplus=\cup_{n\geq 1}\awn{n}$, $\awstar=\{\eword\}\cup\awplus$, and $\awleinf=\awstar\cup\awinf$.  

For $\alpha\in\awstar$ and $A\in\powerset{\dgraph^0}$ (the power set of $\dgraph^0$), the \emph{relative range of $\alpha$ with respect to} $A$ is the set
\[r(A,\alpha)=
\begin{cases}
\{r(\lambda)\ |\ \lambda\in\dgraph^{\ast},\ \lbf(\lambda)=\alpha,\ s(\lambda)\in A\}, &
\text{if }\alpha\in\awplus \\ 
A, & \text{if }\alpha=\eword.
\end{cases}
\]
The \emph{range of $\alpha$}, denoted by $r(\alpha)$, is the set \[r(\alpha)=r(\dgraph^0,\alpha),\]
so that $r(\eword)=\dgraph^0$ and, if $\alpha\in\awplus$, then  $r(\alpha)=\{r(\lambda)\in\dgraph^0\ |\ \lbf(\lambda)=\alpha\}$.

We also define \[\lbf(A\dgraph^1)=\{\lbf(e)\ |\ e\in\dgraph^1\ \mbox{and}\ s(e)\in A\}=\{a\in\alf\ |\ r(A,a)\neq\emptyset\}.\]

A labelled path $\alpha$ is a \emph{beginning} of a labelled path $\beta$ if $\beta=\alpha\beta'$ for some labelled path $\beta'$. Labelled paths  $\alpha$ and $\beta$ are \emph{comparable} if either one is a beginning of the other. If $1\leq i\leq j\leq |\alpha|$, let $\alpha_{i,j}=\alpha_i\alpha_{i+1}\ldots\alpha_{j}$ if $j<\infty$ and $\alpha_{i,j}=\alpha_i\alpha_{i+1}\ldots$ if $j=\infty$. If $j<i$ set $\alpha_{i,j}=\eword$. Define $\overline{\awinf}=\overline{\lbf(\dgraph^{\infty})}=\{\alpha\in\alf^{\infty}\mid \alpha_{1,n}\in\awstar,\forall n\in\N\}$, that is, it is the set of all infinite words such that all beginnings are finite labelled paths. 
Also we write $\overline{\awleinf}=\awstar\cup\overline{\awinf}$.

A \emph{labelled space} is a triple $\lspace$ where $\lgraph$ is a labelled graph and $\acf$ is a family of subsets of $\dgraph^0$ which is closed under finite intersections and finite unions, contains all $r(\alpha)$ for $\alpha\in\awplus$, and is \emph{closed under relative ranges}, that is, $r(A,\alpha)\in\acf$ for all $A\in\acf$ and all $\alpha\in\awstar$. A labelled space $\lspace$ is \emph{weakly left-resolving} if for all $A,B\in\acf$ and all $\alpha\in\awplus$ we have $r(A\cap B,\alpha)=r(A,\alpha)\cap r(B,\alpha)$. A weakly left-resolving labelled space such that $\acf$ is closed under relative complements will be called \emph{normal}\footnote{
        Note this definition differs from \cite{MR3614028}. However since all labelled spaces             considered in this paper are weakly left-resolving, we include `weakly-left resolving' in the
        definition of a  normal labelled space.}. 

For $\alpha\in\awstar$, define \[\acfra=\acf\cap\powerset{r(\alpha)}=\{A\in\acf\mid A\scj r(\alpha)\}.\] If a labelled space is normal, then  $\acfra$ is a Boolean algebra for each $\alpha\in\awstar$. 

Our main objects of study in this paper are ultragraphs. Our approach is through labelled spaces, which will allow us to prove results for ultragraphs in full generality. Next, we outline how a labelled space is associated with an ultragraph, we highlight certain properties and we fix some notation.

\begin{definicao}\label{def of ultragraph}
An \emph{ultragraph} is a quadruple $\mathcal{G}=(G^0, \mathcal{G}^1, r,s)$ consisting of two countable sets $G^0, \mathcal{G}^1$, a map $s:\mathcal{G}^1 \to G^0$, and a map $r:\mathcal{G}^1 \to \powerset{G^0}\setminus \{\emptyset\}$, where $\powerset{G^0}$ is the power set of $G^0$.
\end{definicao}

\begin{definicao}\label{prop_vert_gen}
Let $\mathcal{G}$ be an ultragraph. Define $\mathcal{G}^0$ to be the smallest subset of $\powerset{G^0}$ that contains $\{v\}$ for all $v\in G^0$, contains $r(e)$ for all $e\in \mathcal{G}^1$, contains $\emptyset$, and is closed under finite unions and finite intersections. Elements of $\G^0$ are called \emph{generalized vertices}. The accommodating family $\acf$ associated to $\G$ is the smallest family of subsets of $G^0$ that contains $\G^0$ and is closed under relative complements, finite unions and finite intersections.
\end{definicao}

We can now define the labelled space associated to an ultragraph.

\begin{definicao}\label{labelfromultra} (\cite[Examples 3.3(ii) and 4.3(ii)]{MR2304922}, \cite[Example 2]{MR3614028}).
Fix an ultragraph $\mathcal{G}=(G^0, \mathcal{G}^1, r,s)$. Let $\dgraph_\mathcal{G}=(\dgraph^0_\mathcal{G}, \dgraph^1_\mathcal{G},r^{\prime},s^{\prime})$, where $\dgraph^0_\mathcal{G}=G^0$, $\dgraph^1_\mathcal{G}=\{(e,w):e\in\mathcal{G}^1, w\in r(e)$ and define  $r^{\prime}(e,w)=w$ and $s^{\prime}(e,w)=s(e)$. Set $\alf=\dgraph^1$, $\acf$ the accommodating family of $\G$ and define $\awn{}_\mathcal{G}:\dgraph^1\to\alf$ by $\awn{}_\mathcal{G}(e,w)=e$. Then,
$(\dgraph_\mathcal{G},\awn{}_\mathcal{G},\acf)$ is the normal labelled space associated to $\G$. 
\end{definicao}

Although ultragraphs are labelled graphs, it is important to recognize the usual notation used in their study. We briefly recall this below. For more details we refer the reader to \cite{MR2457327, tasca2020kms}.
 
Let $\G$ be an ultragraph. A \emph{finite path} in $\G$ of is either an element of $\G^{0}$ or a sequence of edges $e_{1}\ldots e_{k}$ in $\G^{1}$ where $s\left(e_{i+1}\right)\in r\left(e_{i}\right)$ for $1\leq i<k$. The set of finite paths in $\G$ is denoted by $\G^{\ast}$. 
An \emph{infinite path} in $\G$ is an infinite sequence of edges $\gamma=e_1e_2\ldots$ in $\prod \G^1$, where $s\left(e_{i+1}\right)\in r\left(e_i\right)$ for all $i\geq 1$.  The set of infinite paths in $\G$ is denoted by $\p^\infty$. With this definition, labelled paths correspond to paths on the ultragraph, and $\awinf=\overline{\awinf}$. Also, for any $A\in\acf$ and any $\alpha\in\awplus$ such that $s(\alpha)\in A$. A vertex $v$ in $G^0$ is called a \emph{sink} if $\left|s^{-1}\left(v\right)\right|=0$. We denote the set of sinks in an ultragraph $\G$ by $G_s^0$. We say that the ultragraph is \emph{connected} if for every $v,w\in G^0$, there exists a finite path $\gamma\in\G^*$ such that $s(\gamma)=v$ and $w\in r(\gamma)$.
For $n\geq1,$ we define $$\p^n:=\{(\alpha,A):\alpha\in\G^*,|\alpha |=n,A\in\G^0,A\subseteq r(\alpha)\}.$$ 
We specify that $\left(\alpha,A\right)=(\beta,B)$ if, and only if,
$\alpha=\beta$ and $A=B$. Letting $\p^0:=\G^0$ we define the \label{def_esp_ult} \emph{ultrapath space} associated with the ultragraph $\G$ to be $\p:=\displaystyle\bigsqcup_{n\geq 0}\p^n$.  The elements of $\p$ are called \emph{ultrapaths}. We embed the set of finite paths $\G^*$ in $\p$ by sending $\alpha$ to $(\alpha,r(\alpha))$. Define the length $\left\vert\left(\alpha,A\right)\right\vert$ of a pair $\left(\alpha,A\right)$ to be $\left\vert\alpha\right\vert$. Each $A\in\G^{0}$ is regarded as an ultrapath of length zero and can be identified with the pair $(A,A)$. Hence, the range map $r$ and the source map $s$ extends to $\p$ by declaring that $r\left(\left(\alpha,A\right)\right)=A$, $s\left(\left(\alpha,A\right)\right)=s\left(\alpha\right)$ and $r(A,A)=r(A)=A=s(A)=s(A,A)$.

A finite path $\al\in \GG^*$ with $|\al|>0$ is a \textit{loop} if $s(\al)\in r(\al)$. We say $\al$ is a loop based at $A\in \GG^0$ if $s(\al)\in A$. 
An \textit{exit} for a loop $\al=\al_1\ldots\al_n$ is either of the following:
\begin{enumerate}
    \item an edge $e\in \GG^1$ such that there exists an $i$ for which $s(e)\in r(\al_i)$,  but $e\neq  \al_{i+1}$,
    \item a sink $w$ such that $w \in r(\al_i)$ for some $i$.
\end{enumerate}

We concatenate elements in $\p$ in the following way: If $x=(\alpha,A)$ and $y=(\beta,B)$, with $|x|\geq 1, |y|\geq 1$, then $x\cdot y$ is defined if, and only if, $s(\beta)\in A$, in which case, $x\cdot y:=(\alpha\beta,B)$. 
Also we specify that:
	$$x\cdot y=
	\left\{
	\begin{array}{ll}
	x\cap y, & \text{if $x,y\in \G^0$ and if $x\cap y\neq\emptyset$}\\
	y, & \text{if $x\in\G^0$, $|y|\geq 1$, and if $s(y)\in x$}\\
	x_y, & \text{if $y\in\G^0$, $|x|\geq 1$, and if $r(x)\cap y\neq \emptyset$},
	\end{array}
	\right.$$  
where, if $x=\left(\alpha,A\right)$, $\left|\alpha\right|\geq1$ and if $y\in\G^{0}$, the expression $x_{y}$ is defined to be $\left(\alpha,A\cap y\right)$. 
We concatenate ultrapaths in $\p$ with paths in $\p^\infty$ as follows: If $s(\beta)\in r(\alpha,A)=A$, we define $(\alpha,A)\cdot\beta=\alpha\beta\in \p^\infty$, where $(\alpha,A)\in \p$ and $\beta \in \p^{\infty}$. Furthermore, if $\alpha=A$ then $(A,A)\cdot\beta=\beta\in\p^\infty$. 

\begin{remark}\label{rmk:no.relative.complement}
As pointed out in the introduction, the definition of $\G^0$ varies in the literature by either requiring that $\G^0$ is closed under relative complements or not. In this paper, \textbf{we do not require that $\G^0$ is closed under relative complements}.  This is consistent with Tomforde's original definition in \cite{MR2050134}. We shall keep track of this difference whenever it is relevant.
\end{remark}



\subsection{The inverse semigroup of a labelled space}\label{subsection:inverse.semigroup}

Let $\lspace$ be normal labelled space and consider the set \[S=\{(\alpha,A,\beta)\ |\ \alpha,\beta\in\awstar\ \mbox{and}\ A\in\acfrg{\alpha}\cap\acfrg{\beta}\ \mbox{with}\ A\neq\emptyset\}\cup\{0\}.\]
Define a  binary operation on $S$  as follows: $s\cdot 0= 0\cdot s=0$ for all $s\in S$ and, if $s=(\alpha,A,\beta)$ and $t=(\gamma,B,\delta)$ are in $S$, then \[s\cdot t=\left\{\begin{array}{ll}
(\alpha\gamma ',r(A,\gamma ')\cap B,\delta), & \mbox{if}\ \  \gamma=\beta\gamma '\ \mbox{and}\ r(A,\gamma ')\cap B\neq\emptyset,\\
(\alpha,A\cap r(B,\beta '),\delta\beta '), & \mbox{if}\ \  \beta=\gamma\beta '\ \mbox{and}\ A\cap r(B,\beta ')\neq\emptyset,\\
0, & \mbox{otherwise}.
\end{array}\right. \]
For  $s=(\alpha,A,\beta)\in S$, we define $s^*=(\beta,A,\alpha)$. 

The set $S$ endowed with the operations above is an inverse semigroup with zero element $0$ (\cite{MR3648984}, Proposition 3.4), whose semilattice of idempotents is \[E(S)=\{(\alpha, A, \alpha) \ | \ \alpha\in\awstar \ \mbox{and} \ A\in\acfra\}\cup\{0\}.\]

\begin{remark}
In \cite{MR2457327}, an inverse semigroup is associated with an ultragraph. The inverse semigroup we obtain from the labelled space of an ultragraph differs from the inverse semigroup in \cite{MR2457327} in two aspects. Firstly, \cite{MR2457327} intrinsically considers quadruples instead of triples and secondly, it uses $\G^0$, as opposed to $\acf$.
\end{remark}

The natural order in the semilattice $E(S)$ is given by $p\leq q$ if only if $pq=p$. In our case, the order can described by noticing that if $p=(\alpha, A, \alpha)$ and $q=(\beta, B, \beta)$, then $p\leq q$ if and only if $\alpha=\beta\alpha'$ and $A\subseteq r(B,\alpha')$ (see \cite[ Proposition 4.1]{MR3648984}).

\subsection{Filters in $E(S)$}\label{subsection:filters.E(S)}

For  a (meet) semilattice $E$ with $0$, there is a bijection between the set of filters in $E$ and the set $\hat{E}_0$ of characters of $E$ (that is, the zero and meet-preserving non-zero maps from $E$ into the Boolean algebra $\{0,1\}$). With the topology of pointwise convergence on $\hat{E}_0$, the closure of the subset $\hat{E}_\infty$ of characters that correspond to ultrafilters in $E$ is denoted by $\hat{E}_{tight}$, and is called the \emph{tight spectrum} of $E$. Elements of $\hat{E}_{tight}$ are the \emph{tight characters of $E$}, and their corresponding filters in $E$ are \emph{tight filters}. The set of all filters will be denoted by $\filt$ and the set of all 
tight filters will be denoted by $\ftight$, which we also call \emph{tight spectrum}. In particular, we may view $\ftight$ as a closed subspace of $\filt$. See \cite[Section 12]{MR2419901} for details. We use this construction for the semilattice $E(S)$ as in the previous section.

We now describe a basis for the topology of $\ftight$ inherited by the bijection with $\hat{E}_{tight}$ as done in \cite[Section 2.2]{MR2974110}. For each $e\in E(S)$, define
\begin{equation*} \label{dfn:tight.spec.basic.open.neigh}
    V_e=\{\xi\in\ftight\mid e\in\xi\}.
\end{equation*}
If  $\{e_1,\ldots,e_n\}$ is a finite (possibly empty) set in $E(S)$, define
\[ V_{e:e_1,\ldots,e_n}= V_e\cap V_{e_1}^c\cap\cdots\cap V_{e_n}^c=\{\xi\in\ftight\mid e\in\xi,e_1\notin\xi,\ldots,e_n\notin\xi\}.\]
In order to simplify the notation, let $E(S)^+=\bigcup_{n=1}^\infty E(S)^n$, where $E(S)^n$ is the Cartesian product of $n$ copies of $E(S)$. For each $n\in\N$ and $\mathbf{e}=(e,e_1,\ldots,e_n)\in E(S)^+$, define $V_{\mathbf{e}}=V_{e:e_1,\ldots,e_n}$. Then the family $\{V_{\mathbf{e}}\}_{\mathbf{e}\in E(S)^+}$ is a basis for the topology on $\ftight$. Notice that if $E(S)$ is countable, which is the case for ultragraphs, then $\ftight$ is second-countable and therefore metrizable.

\begin{remark}\label{rmk:Ve.not.empty}
Given $e\in E(S)\setminus\{0\}$, a simple application of Zorn's lemma shows that there is an ultrafilter containing $\uset{e}$, which in turn implies that $V_e\neq\emptyset$.
\end{remark}

Let $\lspace$ be a weakly-left resolving labelled space. We recall the description of the tight spectrum of $E(S)$, as given in \cite{MR3648984} with the correction pointed out in \cite{Gil3} and \cite{MR4109095}. 
Let $\alpha\in\overline{\awleinf}$  and let $\{\ftg{F}_n\}_{0\leq n\leq|\alpha|}$ (understanding that ${0\leq n\leq|\alpha|}$ means $0\leq n<\infty$ when $\alpha\in\overline{\awinf}$) be a family such that $\ftg{F}_n$ is a filter in $\acfrg{\alpha_{1,n}}$ for every $n>0$ and $\ftg{F}_0$ is either a filter in $\acf$ or $\ftg{F}_0=\emptyset$. Then, the family $\{\ftg{F}_n\}_{0\leq n\leq|\alpha|}$ is  a  \emph{complete family for} $\alpha$ if
\[\ftg{F}_n = \{A\in \acfrg{\alpha_{1,n}} \ | \ r(A,\alpha_{n+1})\in\ftg{F}_{n+1}\}\]
for all $0\leq n < |\alpha|$.

\begin{theorem}\cite[Theorem 4.13]{MR3648984}\label{thm.filters.in.E(S)}
	Let $\lspace$ be a weakly left-resolving labelled space and $S$   its associated inverse semigroup. Then there is a bijective correspondence between filters in $E(S)$ and pairs $(\alpha, \{\ftg{F}_n\}_{0\leq n\leq|\alpha|})$, where $\alpha\in\overline{\awleinf}$ and $\{\ftg{F}_n\}_{0\leq n\leq|\alpha|}$ is a complete family for $\alpha$.
\end{theorem}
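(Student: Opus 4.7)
The plan is to build the bijection explicitly and then verify the two constructions are mutually inverse. Going from a filter $\xi$ to a pair, the first step is to recover the labelled path. I would define $L(\xi) = \{\alpha\in\awstar : (\alpha,A,\alpha)\in\xi \text{ for some }A\in\acfra\}$ and show it is totally ordered by ``is a beginning of''. This follows from the explicit description of $\leq$ on $E(S)$ recalled at the end of Section~\ref{subsection:inverse.semigroup}: if two idempotents in $\xi$ have a common lower bound $(\gamma,C,\gamma)$ in $\xi$ (guaranteed by property (iii) of a filter), then the order formula forces their first coordinates to both be beginnings of $\gamma$, hence comparable to each other. One then lets $\alpha$ be either the maximum of $L(\xi)$, when one exists, or the unique infinite word having every element of $L(\xi)$ as a beginning; in either case $\alpha\in\overline{\awleinf}$ because each $\alpha_{1,n}\in\awstar$.

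Next, for $0\leq n\leq|\alpha|$ set $\ftg{F}_n=\{A\in\acfrg{\alpha_{1,n}} : (\alpha_{1,n},A,\alpha_{1,n})\in\xi\}$. Checking that each $\ftg{F}_n$ is a filter (or empty, when $n=0$) uses $0\notin\xi$ for the exclusion of $\emptyset$; the monotonicity $(\alpha_{1,n},A,\alpha_{1,n})\leq(\alpha_{1,n},B,\alpha_{1,n})$ when $A\subseteq B$ in $\acfrg{\alpha_{1,n}}$ for upward closure; and the product formula $(\alpha_{1,n},A,\alpha_{1,n})\cdot(\alpha_{1,n},B,\alpha_{1,n})=(\alpha_{1,n},A\cap B,\alpha_{1,n})$ together with condition (iii) of the filter definition for closure under meets. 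Verifying the completeness identity $\ftg{F}_n=\{A\in\acfrg{\alpha_{1,n}} : r(A,\alpha_{n+1})\in\ftg{F}_{n+1}\}$ is then a two-way computation using the product $(\alpha_{1,n},A,\alpha_{1,n})\cdot(\alpha_{1,n+1},B,\alpha_{1,n+1})=(\alpha_{1,n+1},r(A,\alpha_{n+1})\cap B,\alpha_{1,n+1})$: in one direction pick any $B\in\ftg{F}_{n+1}$ (nonempty since $\alpha_{1,n+1}\in L(\xi)$), take the product, and use upward closure of $\ftg{F}_{n+1}$; in the other direction use that $(\alpha_{1,n+1},r(A,\alpha_{n+1}),\alpha_{1,n+1})\leq(\alpha_{1,n},A,\alpha_{1,n})$ together with upward closure of $\xi$.

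For the reverse construction, given a pair $(\alpha,\{\ftg{F}_n\})$, I would set $\xi=\uset{G}$ where $G=\{(\alpha_{1,n},A,\alpha_{1,n}) : 0\leq n\leq|\alpha|,\ A\in\ftg{F}_n\}$, treating any $n$ with $\ftg{F}_n=\emptyset$ as contributing no element. The upward closure condition is automatic and $0\notin\xi$ because every $A\in\ftg{F}_n$ is nonempty. For directedness I would show $G$ itself is directed: given two generators at levels $n\leq m$, iterated use of completeness gives $r(A,\alpha_{n+1}\cdots\alpha_m)\in\ftg{F}_m$, and then closure of $\ftg{F}_m$ under meets yields a common refinement at level $m$, providing the required common lower bound in $G$. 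Checking that the two assignments are mutually inverse is then bookkeeping: starting from $\xi$, the filter generated by all $(\alpha_{1,n},A,\alpha_{1,n})\in\xi$ is $\xi$ itself, because every element of $\xi$ is of the form $(\beta,B,\beta)$ with $\beta\in L(\xi)$ (hence $\beta=\alpha_{1,n}$ for some $n$) and $B\in\ftg{F}_n$; conversely, starting from $(\alpha,\{\ftg{F}_n\})$, the extracted path is $\alpha$ because only the beginnings of $\alpha$ appear as first coordinates in $G$ and upward closure does not introduce new first coordinates outside this chain, and by construction the extracted filters agree with the given $\ftg{F}_n$.

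The main obstacle I expect is handling the borderline case $\ftg{F}_0=\emptyset$ (equivalently, no idempotent $(\eword,A,\eword)$ belongs to $\xi$, which can occur because the idempotent $(\alpha_{1,1},B,\alpha_{1,1})$ need not sit below any $(\eword,A,\eword)$ when $r(A,\alpha_1)$ is too small). This must be tracked carefully in both directions: in extraction to see that it is genuinely allowed and does not contradict nonemptiness of $\xi$, and in reconstruction to verify that the generating set $G$ with its level $0$ absent still yields a filter whose re-extraction does not spuriously produce a nonempty $\ftg{F}_0$. A secondary subtlety is distinguishing $\alpha\in\awinf$ from $\alpha\in\overline{\awinf}\setminus\awinf$; the correct target set is $\overline{\awleinf}$, precisely because the chain $L(\xi)$ can fail to be the set of beginnings of an element of $\awinf$ while still having all its members in $\awstar$, which is the defining property of $\overline{\awinf}$.
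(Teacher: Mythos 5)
This theorem is quoted verbatim from \cite[Theorem 4.13]{MR3648984} and the paper under review gives no proof of it, so the only comparison available is with the original source: your argument is correct and follows essentially the same route as the proof there (extract the word $\alpha$ from the chain of first coordinates of idempotents in $\xi$, read off the levelwise filters $\ftg{F}_n$ via the order and product formulas, and reconstruct $\xi$ as the upward closure of the directed generating set $G$). The only points to pin down in a final write-up are the standing convention that filters are nonempty — which guarantees $L(\xi)\neq\emptyset$ and rules out the degenerate pair with $|\alpha|=0$ and $\ftg{F}_0=\emptyset$ — precisely the borderline case you already flag.
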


Filters are of \emph{finite type} if they are associated with pairs $(\alpha, \{\ftg{F}_n\}_{0\leq n\leq|\alpha|})$ for which $|\alpha|<\infty$, and of \emph{infinite type} otherwise. As a consequence of \cite[Proposition 4.18]{MR3648984}, a filter of finite type is completely determined by $\alpha$ and $\ftg{F}_{|\alpha|}$.

A filter $\xi$ in $E(S)$ with associated labelled path $\alpha\in\overline{\awleinf}$ is sometimes denoted by $\xia$ to stress the word $\alpha$; in addition, the filters in the complete family associated with $\xia$ will be denoted by $\xi^{\alpha}_n$ (or simply $\xi_n$). Specifically,
\begin{align}\label{eq.defines.xi_n}
\xi^{\alpha}_n=\{A\in\acf \ | \ (\alpha_{1,n},A,\alpha_{1,n}) \in \xia\}.
\end{align}

\begin{remark}\label{remark.when.in.xialpha}
	It follows from \cite[Propositions 4.4 and 4.8]{MR3648984} that for a filter $\xi^\alpha$ in $E(S)$ and an element $(\beta,A,\beta)\in E(S)$ we have that $(\beta,A,\beta)\in\xi^{\alpha}$ if and only if $\beta$ is a beginning of $\alpha$ and $A\in\xi^{\alpha}_{|\beta|}$.
\end{remark}

The following theorem characterizes the tight filters in $E(S)$:
\begin{theorem}\cite[Theorems 5.10 and 6.7]{MR3648984} \label{thm:TightFiltersType}
	\label{thm.tight.filters.in.es}
	Let $\lspace$ be a normal labelled space and $S$  its associated inverse semigroup. Then the tight filters in $E(S)$ are:
	\begin{enumerate}[(i)]
		\item The filters of infinite type for which the non-empty elements of their associated complete families are ultrafilters. 
		\item The filters of finite type $\xia$ such that $\xi_{|\alpha|}$ is an ultrafilter in $\acfra$ and for each  $A\in\xi_{|\alpha|}$ at least one of the following conditions hold:
		\begin{enumerate}[(a)]
			\item $\lbf(A\dgraph^1)$ is infinite.
			\item There exists $B\in\acfra$ such that $\emptyset\neq B\subseteq A\cap \dgraph^0_{sink}$.
		\end{enumerate}
	\end{enumerate}
\end{theorem}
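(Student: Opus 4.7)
My plan is to first determine which filters in $E(S)$ are already ultrafilters (these are automatically tight), and then to determine which non-ultrafilter filters lie in the closure of the ultrafilter set with respect to the basic neighborhoods $V_{\mathbf{e}}$. The bijection of Theorem~\ref{thm.filters.in.E(S)} reduces the problem to the analysis of pairs $(\alpha,\{\ftg{F}_n\}_{0\leq n\leq|\alpha|})$, and the characterization naturally splits into the infinite-type and finite-type cases.

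For the ultrafilter characterization, I would show that an infinite type filter \xia\ is an ultrafilter if and only if every non-empty $\ftg{F}_n$ in its complete family is an ultrafilter in \acfrg{\alpha_{1,n}}: if some $\ftg{F}_n$ were properly contained in a larger filter, the completeness relations would propagate the enlargement both upward and downward, producing a strictly larger complete family and contradicting maximality. For a finite type filter \xia, an analogous argument gives that \xia\ is an ultrafilter exactly when $\ftg{F}_{|\alpha|}$ is maximal in \acfra\ \emph{and} $\alpha$ cannot be extended by an edge with source in some $A\in\ftg{F}_{|\alpha|}$; using the description of the natural order recalled from \cite[Proposition~4.1]{MR3648984}, the non-extendability is precisely condition (ii)(b) holding for every $A\in\ftg{F}_{|\alpha|}$. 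This immediately yields part~(i) and the subcase of (ii) in which (b) holds for every element, since such filters are themselves ultrafilters and therefore tight.

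The remaining subcase is a finite type \xia\ with $\ftg{F}_{|\alpha|}$ an ultrafilter and some $A\in\ftg{F}_{|\alpha|}$ satisfying (ii)(a) but possibly not (ii)(b). To prove that such \xia\ is tight, given any basic neighborhood $V_{(\alpha,A,\alpha):(\alpha\beta_1,B_1,\alpha\beta_1),\ldots,(\alpha\beta_k,B_k,\alpha\beta_k)}$ of \xia, I would use the infiniteness of $\lbf(A\dgraph^1)$ to pick an edge $e$ with $s(e)\in A$ whose label avoids the finitely many excluded first letters $\beta_{j,1}$. Then via Zorn's lemma (as in Remark~\ref{rmk:Ve.not.empty}) I would extend the associated complete family past $\alpha\lbf(e)$ into an ultrafilter still belonging to that neighborhood, establishing sufficiency.

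The main obstacle is the converse: ruling out tightness for filters that violate the conditions. Suppose \xia\ is a finite type filter such that some $A\in\ftg{F}_{|\alpha|}$ fails both (a) and (b), so $\lbf(A\dgraph^1)=\{a_1,\ldots,a_m\}$ is finite and $A$ contains no non-empty subset of sinks lying in \acfra. I would engineer a basic neighborhood of \xia\ using $(\alpha,A,\alpha)$ in the inclusion slot and exclusion slots of the form $(\alpha a_i,r(A,a_i),\alpha a_i)$, then argue that any ultrafilter inside this neighborhood must either extend $\alpha$ through one of the forbidden labels $a_i$, or else already be of finite type with an element $A'\subseteq A$ satisfying (b), neither of which is compatible with the assumptions. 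An analogous exclusion strategy at level $n$ handles an infinite type \xia\ having some non-maximal $\ftg{F}_n$, by separating \xia\ from every ultrafilter passing through a strict refinement at level $n$. Putting everything together yields the stated characterization.
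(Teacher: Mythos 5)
The paper offers no proof of this statement---it is imported verbatim from \cite[Theorems 5.10 and 6.7]{MR3648984}---so I can only judge your outline against the statement and the strategy of that source. Your global plan (first determine the ultrafilters of $E(S)$, then decide which remaining filters lie in the closure of the ultrafilters with respect to the basic sets $V_{\mathbf{e}}$) is the right one, and your exclusion arguments for the converse direction (the neighbourhood $V_{(\alpha,A,\alpha):(\alpha a_1,r(A,a_1),\alpha a_1),\ldots}$ when $A$ fails both (a) and (b), and the level-$n$ separation for infinite type) can be made to work.

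There is, however, a genuine error in the sufficiency half. You claim that a finite-type filter $\xi^{\alpha}$ with $\xi_{|\alpha|}$ an ultrafilter is an ultrafilter of $E(S)$ exactly when (ii)(b) holds for every $A\in\xi_{|\alpha|}$. Non-extendability is in fact equivalent to: for every letter $a$ there is $D\in\xi_{|\alpha|}$ with $r(D,a)=\emptyset$, and this is \emph{not} implied by (b) holding everywhere. For example, take $r(\alpha)=\{w\}\cup\{s_1,s_2,\ldots\}$ with each $s_i$ a sink, $w$ emitting one edge labelled $a$, and $\acfra$ generated by $r(\alpha)$ and the singletons $\{s_i\}$; the ultrafilter of cofinite subsets satisfies (b) at every element (each contains some $\{s_j\}\in\acfra$), yet $r(D,a)\neq\emptyset$ for every $D$ in it, so the associated $\xi^{\alpha}$ extends to a filter with word $\alpha a$ and is not maximal. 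Consequently your justification ``such filters are themselves ultrafilters and therefore tight'' fails, and the tightness of finite-type filters via clause (b) is never established. You need a second mechanism parallel to your (a)-mechanism: in a basic neighbourhood $V_{(\alpha,A,\alpha):e_1,\ldots,e_k}$ whose distinguished set $A$ satisfies (b) but not (a), use the sink set $B\in\acfra$ with $\emptyset\neq B\subseteq A\cap\dgraph^{0}_{sink}$ (trimmed to respect the exclusions) to produce an ultrafilter of $\acfra$ containing a nonempty subset of $B$; the resulting finite-type filter is non-extendable, hence an ultrafilter of $E(S)$ in the neighbourhood. Note also that the sufficiency argument must be run neighbourhood by neighbourhood, since the $A$ occurring in a given $V_{(\alpha,A,\alpha):\ldots}$ may satisfy only (b) even if other elements of $\xi_{|\alpha|}$ satisfy (a). A smaller caveat: your ``propagate the enlargement upward'' step for infinite-type ultrafilters is not automatic, because enlarging $\ftg{F}_n$ may introduce sets $D$ with $r(D,\alpha_{n+1})=\emptyset$, so the enlarged family need not remain proper at level $n+1$.
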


For $\alpha\in\overline{\awleinf}$, we denote by $\ftightw{\alpha}$ the set of all tight filters in $E(S)$ for which the associated word is $\alpha$.

\section{Groupoids for ultragraphs via labelled spaces}\label{groupoids}

In this section we associate a groupoid with any ultragraph (in particular, we make no assumptions on the sinks or the range of the edges).  We detail the similarities and differences to the approach in \cite{MR2457327}. For ultragraphs that satisfy Condition~(RFUM2), we show that the unit space of our groupoid (the tight spectrum) coincides with the shift space defined in \cite{tasca2020kms}, and obtain as a consequence that the groupoids also agree. For background on groupoids and their C*-algebras, we refer the reader to \cite{RenaultBook}.

\subsection{The groupoid of a general ultragraph}
\label{vento}

Fix an ultragraph $\G=(G^0,\G^1,r,s)$ and let $\lgraph$ be the labelled graph associated to it as in Definition~\ref{labelfromultra}. Let $\acf$ and $\G^0$ be as in Definition~\ref{prop_vert_gen}.
Observe that if $\alpha=\alpha_1\ldots\alpha_n$ is a path on $\G$, then $\acfra=\acfrg{\alpha_n}$. 

The first step to obtain the groupoid associated to $\G$ is to describe the tight spectrum $\ftight$ of $E(S)$, where $S$ is the inverse semigroup associated to $\lspace$ as in Section \ref{subsection:inverse.semigroup}. For this we need a few auxiliary results, which we prove next. 

\begin{remark}\label{rmk:relative.range}
Notice that for every $A\scj G^0$ and $e\in\G^1$, we have that $r(A,e)=r(e)$ if $s(e)\in A$ and $r(A,e)=\emptyset$ otherwise. In particular $r(\{s(e)\},e)=r(e)$.
\end{remark}

\begin{lema}\label{lem:ultrafilter.equals.principal} 
Let $\alpha$ be a path on $\G$ such that $|\alpha|\geq 1$ and let $\{\ft_n\}_{n=0}^{|\alpha|}$ be a complete family of filters for $\alpha$. If $0\leq n <|\alpha|$, then $\ft_n=\usetr{\{s(\alpha_{n+1})\}}{\acfrg{\alpha_n}}$.
\end{lema}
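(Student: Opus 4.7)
The plan is to unwind the defining condition of a complete family one index at a time and combine it with the very special form that relative ranges take in an ultragraph. The key observation is that for an edge $e$, by Remark~\ref{rmk:relative.range} the relative range $r(A,e)$ only takes two values: $r(e)$ when $s(e)\in A$ and $\emptyset$ otherwise. This reduces the membership condition in a complete family to a pure set-membership condition on $s(\alpha_{n+1})$.

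First I would fix $0\leq n<|\alpha|$ and use the definition of complete family to write
\[\ft_n=\{A\in\acfrg{\alpha_{1,n}}\mid r(A,\alpha_{n+1})\in\ft_{n+1}\}.\]
Next, by Remark~\ref{rmk:relative.range} applied to the edge $\alpha_{n+1}$, we have $r(A,\alpha_{n+1})=r(\alpha_{n+1})$ whenever $s(\alpha_{n+1})\in A$ and $r(A,\alpha_{n+1})=\emptyset$ otherwise. Since $\ft_{n+1}$ is a filter, $\emptyset\notin\ft_{n+1}$, so the condition $r(A,\alpha_{n+1})\in\ft_{n+1}$ forces $s(\alpha_{n+1})\in A$.

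The remaining step, which is the only mildly subtle point, is to verify that in the case $s(\alpha_{n+1})\in A$ the element $r(\alpha_{n+1})$ does belong to $\ft_{n+1}$, so that the equivalence goes both ways. This is immediate because $r(\alpha_{n+1})=r(\alpha_{1,n+1})$ (ranges of ultragraph paths equal ranges of their last edges) is the top element of the sublattice $\acfrg{\alpha_{1,n+1}}$, and any nonempty filter contains its top element by upward-closure.

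Combining these observations,
\[\ft_n=\{A\in\acfrg{\alpha_{1,n}}\mid s(\alpha_{n+1})\in A\}=\usetr{\{s(\alpha_{n+1})\}}{\acfrg{\alpha_{1,n}}},\]
which gives the claim (noting that $\acfrg{\alpha_n}$ in the statement is the same as $\acfrg{\alpha_{1,n}}$ by the observation at the beginning of the section). The proof is essentially a two-line computation once the relative-range remark is invoked; there is no real obstacle, just bookkeeping.
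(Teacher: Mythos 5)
Your proof is correct and follows essentially the same route as the paper's: both arguments combine the defining formula for a complete family with Remark~\ref{rmk:relative.range} to reduce membership in $\ft_n$ to the condition $s(\alpha_{n+1})\in A$. You additionally spell out why $r(\alpha_{n+1})\in\ft_{n+1}$ (it is the top element of $\acfrg{\alpha_{1,n+1}}$), a point the paper leaves implicit; this is a welcome clarification but not a different method.
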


\begin{proof}
By Remark \ref{rmk:relative.range} and the definition of complete family, $\{s(\alpha_{n+1})\}\in\ft_n$, and if $A\in\ft_n$ then $s(\alpha_{n+1})\in A$. This implies that $\ft_n\scj \usetr{\{s(\alpha_{n+1})\}}{\acfrg{\alpha_n}}$. On the other hand, $\{s(\alpha_{n+1})\}\in\ft_n$ implies that $\usetr{\{s(\alpha_{n+1})\}}{\acfrg{\alpha_n}}\scj \ft_n$.
\end{proof}

\begin{lema}\label{lem:filter.with.finite.set}
Let $\alpha$ be a finite path on $\G$ and $\ft$ be an ultrafilter on $\acfra$. If there exists $A\in\ft$ with $|A|<\infty$, then $\ft=\usetr{\{v\}}{\acfra}$ for some $v\in A$.
\end{lema}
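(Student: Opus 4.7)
The plan is as follows. Since $\acfra$ is a Boolean algebra (the labelled space associated with $\G$ is normal) and $\ft$ is an ultrafilter, by Proposition~\ref{prop:ultrafilter} it is a prime filter. Write the finite set $A\in \ft$ as $A=\{v_1,\ldots,v_k\}$; I first want to show that each singleton $\{v_i\}$ lies in $\acfra$. For this, note that by Definition~\ref{prop_vert_gen} the family $\G^0$ contains $\{v\}$ for every $v\in G^0$, so $\{v_i\}\in\G^0\scj\acf$; since $v_i\in A\scj r(\alpha)$, we have $\{v_i\}\scj r(\alpha)$ and therefore $\{v_i\}\in\acfra$.

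Next I would use primeness inductively: as $A=\{v_1\}\cup\cdots\cup\{v_k\}\in\ft$, the primeness of $\ft$ forces $\{v_i\}\in\ft$ for at least one index $i$. Fix such an $i$, and set $v:=v_i\in A$.

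It then remains to show $\ft=\usetr{\{v\}}{\acfra}$. For the inclusion $\usetr{\{v\}}{\acfra}\scj\ft$, if $B\in\acfra$ satisfies $\{v\}\scj B$, the upward-closure of $\ft$ (property (ii) of a filter) together with $\{v\}\in\ft$ yields $B\in\ft$. For the reverse inclusion $\ft\scj\usetr{\{v\}}{\acfra}$, take $B\in\ft$; since $\ft$ is closed under meets we have $B\cap\{v\}\in\ft$, hence $B\cap\{v\}\neq\emptyset$, which gives $v\in B$ and therefore $B\in\usetr{\{v\}}{\acfra}$.

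There is no real obstacle here; the only point to be careful about is verifying that singletons of vertices in $r(\alpha)$ belong to $\acfra$, which is immediate from the construction of $\acf$ for an ultragraph. The argument is short and uses only the Boolean/prime-filter correspondence from Proposition~\ref{prop:ultrafilter} together with the basic filter axioms.
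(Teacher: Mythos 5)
Your proof is correct and follows essentially the same route as the paper: decompose $A$ as a finite union of singletons (which lie in $\acfrg{\alpha}$) and use primeness of the ultrafilter to extract some $\{v_i\}\in\ft$. The only cosmetic difference is the final step, where you verify the equality $\ft=\usetr{\{v_i\}}{\acfrg{\alpha}}$ by a direct double inclusion, while the paper notes that $\usetr{\{v_i\}}{\acfrg{\alpha}}$ is itself an ultrafilter contained in $\ft$ and concludes by maximality.
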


\begin{proof}
Suppose $A=\{v_1,\ldots,v_n\}$. Since $\emptyset\notin\ft$, we have that $n\geq 1$. Since $A=\bigcup_{i=1}^n \{v_i\}$ and $\ft$ is an ultrafilter, by Proposition~\ref{prop:ultrafilter}, there exists $i\in\{1,\ldots,n\}$ such that $\{v_i\}\in\ft$. In this case $\usetr{\{v_i\}}{\acfra}\scj \ft$, and since $\usetr{\{v_i\}}{\acfra}$ is also an ultrafilter, equality holds.
\end{proof}

\begin{lema}\label{lem:infinite.family}
Let $\alpha$ be an infinite path on $\G$, then $\{\ft_n\}_{n=0}^\infty:=\{\usetr{\{s(\alpha_{n+1})\}}{\acfrg{\alpha_n}}\}_{n=0}^\infty$ is the only complete family of filters (in particular, ultrafilters) for $\alpha$.
\end{lema}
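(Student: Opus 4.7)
The plan is to split the statement into an \emph{existence} part (verifying that the displayed family really is a complete family of ultrafilters for $\alpha$) and a \emph{uniqueness} part. Uniqueness is almost free from the previous result: let $\{\ft'_n\}_{n=0}^\infty$ be any complete family of filters for $\alpha$. Since $\alpha$ is infinite, for every $n\geq 0$ the index $n$ satisfies $0\leq n<|\alpha|$, so Lemma~\ref{lem:ultrafilter.equals.principal} applies at each $n$ and immediately gives $\ft'_n = \usetr{\{s(\alpha_{n+1})\}}{\acfrg{\alpha_n}}$. Thus any two complete families of filters for $\alpha$ coincide level by level.

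For existence, I would set $\ft_n := \usetr{\{s(\alpha_{n+1})\}}{\acfrg{\alpha_n}}$. First I would note that this is well defined: because $\alpha$ is a path on $\G$, $s(\alpha_{n+1})\in r(\alpha_n)$, and since $\acf$ contains all singletons of $G^0$ (Definition~\ref{prop_vert_gen}) we have $\{s(\alpha_{n+1})\}\in \acfrg{\alpha_n}$. Each $\ft_n$ is then the principal filter in the Boolean algebra $\acfrg{\alpha_n}$ generated by the singleton (atom) $\{s(\alpha_{n+1})\}$, and hence is an ultrafilter.

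It remains to verify the completeness condition. Fix $n\geq 0$ and take $A\in \acfrg{\alpha_n}$. By Remark~\ref{rmk:relative.range}, $r(A,\alpha_{n+1}) = r(\alpha_{n+1})$ when $s(\alpha_{n+1})\in A$, and $r(A,\alpha_{n+1}) = \emptyset$ otherwise. Since $\alpha$ is a path, $s(\alpha_{n+2})\in r(\alpha_{n+1})$, so $r(\alpha_{n+1})\in \ft_{n+1}$, while $\emptyset \notin \ft_{n+1}$. Combining these, $r(A,\alpha_{n+1})\in \ft_{n+1}$ iff $s(\alpha_{n+1})\in A$ iff $A\in \ft_n$. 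This is exactly the completeness identity $\ft_n=\{A\in\acfrg{\alpha_n}\mid r(A,\alpha_{n+1})\in\ft_{n+1}\}$, which finishes existence.

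There is no real obstacle here; the proof is essentially a careful bookkeeping exercise reducing everything to Remark~\ref{rmk:relative.range} (for the relative range computation) and Lemma~\ref{lem:ultrafilter.equals.principal} (for uniqueness). The only subtle point worth stating explicitly is that $\{s(\alpha_{n+1})\}$ is an atom in the Boolean algebra $\acfrg{\alpha_n}$, which is why each $\ft_n$ is an ultrafilter and not merely a filter.
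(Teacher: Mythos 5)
Your proof is correct and follows essentially the same route as the paper: uniqueness via the observation that every level of a complete family must be the principal filter of $\{s(\alpha_{n+1})\}$ (the paper inlines the argument of Lemma~\ref{lem:ultrafilter.equals.principal} rather than citing it, but it is the identical computation), and existence by checking the completeness identity directly from Remark~\ref{rmk:relative.range}. Your write-up is in fact slightly more explicit than the paper's, which leaves the existence verification to the reader.
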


\begin{proof}
First suppose that $\{\ft_n\}_{n=0}^\infty$ is a complete family for $\alpha$. Then, it follows from Remark~\ref{rmk:relative.range} and the definition of complete family that $\{s(\alpha_{n+1})\}\in\ft_n$ and that if $A\in\ft_n$, then $s(\alpha_{n+1})\in A$. This implies that $\ft_n\scj \usetr{\{s(\alpha_{n+1})\}}{\acfrg{\alpha_n}}$. On the other hand, $\{s(\alpha_{n+1})\}\in\ft_n$ implies that $\usetr{\{s(\alpha_{n+1})\}}{\acfrg{\alpha_n}}\scj \ft_n$.

That $\{\usetr{\{s(\alpha_{n+1})\}}{\acfrg{\alpha_n}}\}_{n=0}^\infty$ is always a complete family for $\alpha$ follows from Remark~\ref{rmk:relative.range}, and each $\usetr{\{s(\alpha_{n+1})\}}{\acfrg{\alpha_n}}$ is an ultrafilter because it is a principal filter of a singleton.
\end{proof}

\begin{remark}\label{rmk:filters.finite.type}
The above results allow us to describe the set of filters $\filt$ of $E(S)$ for the labelled space associated to an ultragraph. Namely, for each infinite path on $\G$, there is only one filter in $E(S)$ associated to it, and for each finite path $\alpha$, there is a filter in $E(S)$ associated to each filter $\ft$ on $\acfra$. In particular for each $(\alpha,A)\in\mathfrak{p}$, we can define a filter in $E(S)$ by defining $\ft=\usetr{A}{\acfra}$. Since not all filters in $\acfra$ are necessarily principal (see Remark~\ref{rmk:ultraset}), it follows that the correspondence of \cite[Proposition 10]{MR2457327} is incomplete. The issue lies in Case I of the proof of \cite[Proposition 10]{MR2457327}, because, contrary to what happens with graphs, if the product of two idempotents is not zero, then the idempotents do not necessarily correspond to two ultrapaths where one is the beginning of the other. See also \cite[Proposition 4.4]{MR3648984}.
\end{remark}

We now give a complete description of $\ftight$.

\begin{proposicao}\label{prop:tight.filters}
For each infinite path $\alpha$ on $\G$, there is unique element $\xi\in\ftight$, whose associated word is $\alpha$. And if $\xia$ is a filter of finite type, then $\xia\in\ftight$ if and only if one of the following holds:
\begin{enumerate}[(i)]
    \item There exists $v\in G^0_s$ such that $\xi_{|\alpha|}=\usetr{\{v\}}{\acfra}$.
    \item For all $A\in\xi_{|\alpha|}$, $|\varepsilon(A)|=\infty$.
    \item For all $A\in\xi_{|\alpha|}$, $|A\cap G^0_s|=\infty$.
\end{enumerate}
\end{proposicao}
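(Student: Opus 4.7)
The plan is to deduce Proposition~\ref{prop:tight.filters} directly from the general tight-filter characterization Theorem~\ref{thm:TightFiltersType}, specialized to the ultragraph labelled space via the descriptions of complete families in Lemmas~\ref{lem:ultrafilter.equals.principal}--\ref{lem:infinite.family}. For an infinite path $\alpha$, Lemma~\ref{lem:infinite.family} gives a unique complete family, consisting of the principal ultrafilters $\usetr{\{s(\alpha_{n+1})\}}{\acfrg{\alpha_n}}$; by the bijection of Theorem~\ref{thm.filters.in.E(S)} there is then a unique filter with word $\alpha$, and Theorem~\ref{thm:TightFiltersType}(i) ensures it is tight.

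For the finite-type case, I first translate the abstract conditions of Theorem~\ref{thm:TightFiltersType}(ii) into ultragraph terms. Unwinding Definition~\ref{labelfromultra} yields $\lbf(A\dgraph^1)=\varepsilon(A)$ and $\dgraph^0_{sink}=G^0_s$, so clause (a) becomes $|\varepsilon(A)|=\infty$. For clause (b), any $v\in A\cap G^0_s$ gives $\{v\}\in\acfra$ with $\{v\}\subseteq A\cap G^0_s$, so clause (b) collapses to ``$A\cap G^0_s\neq\emptyset$.'' Thus tightness of a finite-type $\xi^\alpha$ is equivalent to $\xi_{|\alpha|}$ being an ultrafilter in $\acfra$ together with the dichotomy: for every $A\in\xi_{|\alpha|}$, either $|\varepsilon(A)|=\infty$ or $A\cap G^0_s\neq\emptyset$.

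To prove the forward direction I split on whether $\xi_{|\alpha|}$ contains a finite set. If it does, Lemma~\ref{lem:filter.with.finite.set} forces $\xi_{|\alpha|}=\usetr{\{v\}}{\acfra}$; applying the dichotomy to the singleton $\{v\}$ either places $v\in G^0_s$ (case (i)), or forces $v$ to emit infinitely many edges, which since $v\in A$ for every $A\in\xi_{|\alpha|}$ gives $|\varepsilon(A)|=\infty$ for all such $A$ (case (ii)). If every $A\in\xi_{|\alpha|}$ is infinite and both (ii) and (iii) fail, pick $A_1,A_2\in\xi_{|\alpha|}$ with $|\varepsilon(A_1)|<\infty$ and $|A_2\cap G^0_s|<\infty$, and set $A=A_1\cap A_2$. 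Then $|\varepsilon(A)|<\infty$, so by the tight dichotomy $\emptyset\neq A\cap G^0_s=\{v_1,\ldots,v_k\}$, which lies in $\acfra$ (as a finite union of singletons inside $r(\alpha)$). This finite set cannot belong to $\xi_{|\alpha|}$ by non-principality, so Proposition~\ref{prop:ultrafilter} produces $B\in\xi_{|\alpha|}$ with $B\cap\{v_1,\ldots,v_k\}=\emptyset$; then $A\cap B\in\xi_{|\alpha|}$ satisfies $|\varepsilon(A\cap B)|<\infty$ and $(A\cap B)\cap G^0_s=\emptyset$, contradicting tightness.

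The converse is immediate: case (i) gives a principal ultrafilter on a sink where clause (b) holds everywhere, while cases (ii) and (iii) (understood as also requiring $\xi_{|\alpha|}$ to be an ultrafilter, consistent with the forward direction and Theorem~\ref{thm:TightFiltersType}(ii)) yield clause (a) or (b) respectively at every $A\in\xi_{|\alpha|}$. I expect the chief obstacle to be precisely the non-principal forward case, where one must leverage the normality of the ultragraph labelled space (to place $\{v_1,\ldots,v_k\}$ in $\acfra$) together with the ultrafilter criterion of Proposition~\ref{prop:ultrafilter} to synthesize the contradicting element $B$; the other cases are essentially bookkeeping.
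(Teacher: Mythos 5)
Your proof is correct and follows the paper's overall strategy --- specializing Theorem~\ref{thm:TightFiltersType} through Lemmas~\ref{lem:infinite.family} and \ref{lem:filter.with.finite.set}, translating clause (b) into ``$A\cap G^0_s\neq\emptyset$,'' and splitting the finite-type forward direction on whether $\xi_{|\alpha|}$ contains a finite set --- but it diverges in the non-principal sub-case. There the paper argues directly rather than by contradiction: if (ii) fails, pick $B\in\xi_{|\alpha|}$ with $|\varepsilon(B)|<\infty$; for any $A\in\xi_{|\alpha|}$ the set $A\cap B$ lies in $\xi_{|\alpha|}$, is infinite (since in this case every element of the ultrafilter is infinite), and has only finitely many emitting vertices, so $A\cap B$ --- and hence $A$ --- contains infinitely many sinks, which is exactly (iii). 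This is a pure counting argument that does not invoke the tightness dichotomy at that step. You instead assume both (ii) and (iii) fail and synthesize a contradicting element $A\cap B$ with finitely many emitted edges and no sinks, using the separation criterion of Proposition~\ref{prop:ultrafilter} to strip away the finitely many sinks of $A$. Both arguments are valid; the paper's is shorter and more self-contained, while yours makes explicit where maximality of $\xi_{|\alpha|}$ enters. Two small remarks: placing $\{v_1,\ldots,v_k\}$ in $\acfra$ needs only that $\G^0$ contains singletons and is closed under finite unions, so the appeal to normality is superfluous; and your parenthetical observation that conditions (ii) and (iii) must be read as presupposing that $\xi_{|\alpha|}$ is an ultrafilter is a fair point which the paper's own converse also leaves implicit.
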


\begin{proof}
First, if $\alpha$ is an infinite path on $\G$, then, by Lemma~\ref{lem:infinite.family}, the family $\{\usetr{\{s(\alpha_{n+1})\}}{\acfrg{\alpha_n}}\}_{n=0}^\infty$ is complete and each filter is an ultrafilter. By Theorem~\ref{thm.tight.filters.in.es}, the filter associated to $\alpha$ and this family is an element of $\ftight$. The uniqueness of such $\xi$ follows from Lemma~\ref{lem:infinite.family}.

For the second part, suppose that $\xia$ is a filter of finite type such that $\xia\in\ftight$. By Lemma \ref{lem:filter.with.finite.set}, if there exists $A\in\xi_{|\alpha|}$, with $|A|<\infty$, then $\xi_{|\alpha|}=\usetr{\{v\}}{\acfra}$ for some $v\in r(\alpha)$. Since $\xi$ is tight, by Theorem~\ref{thm.tight.filters.in.es}, either $v\in G^0_s$ and (i) is satisfied, or $v$ is an infinite emitter and (ii) is satisfied. Suppose now that $|A|=\infty$ for all $A\in \xi_{|\alpha|}$. In this case, either (ii) is satisfied, or there exists $B\in \xi_{|\alpha|}$ such that $|\varepsilon(B)|<\infty$. In the latter case, given $A\in\xi_{|\alpha|}$, we have that $|\varepsilon(A\cap B)|\leq|\varepsilon(B)|<\infty$. Since $\xi_{|\alpha|}$ is an ultrafilter, and thus $ A\cap B \in \xi_{|\alpha|}$, it follows that $|A\cap B|=\infty$. Hence $A\cap B$ has an infinite amount of sinks, and the same holds for $A$.

Finally if $\xia$ is a filter of finite type such that $\xi_{|\alpha|}$ satisfies Conditions~(i) or (iii) in the statement, then $\xi_{|\alpha|}$ satisfies Condition~(ii)(b) in Theorem~\ref{thm.tight.filters.in.es}, and if $\xi_{|\alpha|}$ satisfies Condition~(ii) in the statement, then $\xi_{|\alpha|}$ satisfies Condition~(ii)(a) in Theorem~\ref{thm.tight.filters.in.es}. In all cases $\xia\in\ftight$.
\end{proof}

Using the description of filters in $E(S)$ given in Remark~\ref{rmk:filters.finite.type} and Proposition~\ref{prop:tight.filters}, the elements of $\ftight$ can described in two ways:

\begin{itemize}
\item if $\xi\in\ftight$ is of infinite type, then it is completely described by the infinite path associated to it;
\item if $\xi\in\ftight$ is of finite type, then it is described by a pair $(\alpha,\ft)$, where $\alpha$ is the labelled path associated to $\xi$ and $\ft$ is a filter in $\acfra$ satisfying one of the three conditions of Proposition~\ref{prop:tight.filters}. The filter $\ft$ is equal to $\xi_{|\alpha|}$ and by the definition of complete family, for all $n<|\alpha|$, $\xi_n$ can be recuperated from $\ft$.
\end{itemize}

Next, associate a topological groupoid to an arbitrary ultragraph as the groupoid associated to its corresponding labelled space (see  \cite{Gil3} for groupoids associated to general labelled spaces). This can be done in terms of a shift map and the Renault-Deaconu construction \cite{Deaconu95,Renault00}. For each $n\in\N$, we let $\ftight^{(n)}=\{\xia\in\ftight\mid |\alpha|\geq n\}$. The map $\sigma:\ftight^{(1)}\to\ftight$ of \cite[Proposition 4.8]{Gil3} can be described in our case as:
\begin{enumerate}
    \item If the associated path of $\xi$ is an infinite path $\alpha_1\alpha_2\alpha_3\ldots$, then $\sigma(\xi)$ is the filter associated to the path $\alpha_2\alpha_3\ldots$ as in Proposition \ref{prop:tight.filters}.
    
    \item If the associated path of $\xi$ is a finite path $\alpha_1\alpha_2\ldots\alpha_n$ with $n\geq 2$, then $\sigma(\xi)$ is the filter associated to the pair $(\alpha_2\ldots\alpha_n,\xi_n)$.
    
    \item If the associated path of $\xi$ is an edge $e\in\G^1$, then $\sigma(\xi)$ is the filter associated to the pair $(\eword,\usetr{\xi_1}{\acf})$, recalling that $\eword$ is the empty word.
\end{enumerate}

For $n\in\N^*$, in order to apply $\sigma^n$ to a element $\xi\in\ftight$, it is necessary that $\xi\in\ftight^{(n)}$. The Renault-Deaconu groupoid associated to the $\sigma$ is then
\[\Gamma(\ftight,\sigma)=\{(\xi,m-n,\eta)\in\ftight\times\mathbb{Z}\times\ftight\mid m,n,\in\N,\xi\in\ftight^{(n)},\eta\in\ftight^{(m)},\sigma^n(\xi)=\sigma^m(\eta)\},\]
with product and inverse given by
\begin{itemize}
    \item $(\xi,k,\eta)(\zeta,l,\rho)=(\xi,k+l,\rho)$ for $(\xi,k,\eta),(\zeta,l,\rho)\in\Gamma(\ftight,\sigma)$ such that $\eta=\zeta$, and 
    \item $(\xi,k,\eta)^{-1}=(\eta,-k,\xi)$ for $(\xi,k,\eta)\in\Gamma(\ftight,\sigma)$.
\end{itemize}
respectively. A basis for the topology on $\Gamma(\ftight,\sigma)$ is given by the family of sets of the form
\[\mathcal{V}(U,V,m,n)=\{(\xi,k,\eta)\in\Gamma(\ftight,\sigma)\mid k=m-n, (\xi,\eta)\in U\times V,\sigma^m(\xi)=\sigma^n(\eta)\},\]
where $m,n\in\N$, $U$ is an open subset of $\ftight^{(m)}$ and $V$ is an open subset of $\ftight^{(n)}$.

\begin{remark}\label{rmk:ultraset}
Due to Remark \ref{rmk:filters.finite.type}, there is not always a bijection between the groupoid $\Gamma(\ftight,\sigma)$ and the ultragraph groupoid of \cite{MR2457327} for ultragraphs without sinks, however both approaches are very similar. The idea in both is to consider an inverse semigroup associated to an ultragraph, then consider the space of filters in the idempotent semilattice and then take a certain closed subspace. In general, the closed subspace is the tight spectrum defined by Exel in \cite{MR2419901}, which appeared around the same time as  \cite{MR2457327}. If an ultragraph has no sinks, then because of \cite[Theorem 5.10]{MR3648984} the ultrafilters in $E(S)$ are exactly the filters associated with the infinite paths. In this case, the set $\ftight$ is the closure of the ``set of infinite paths'' as it is done in the discussion after \cite[Proposition 18]{MR2457327}. In order to describe which points of $\ftight$ are missing in the construction of \cite{MR2457327}, we recall that the approach there is to define an ultraset as an element $A\in\G^0$ such that the principal filter $\usetr{A}{\G^0}$ is an ultrafilter. Because all singleton subsets of $G^0$ are in $\G^0$, we see that a principal filter is an ultrafilter if and only if $A$ is a singleton. This means that  the approach in \cite{MR2457327} only considers elements of finite type in $\ftight$ that are given by pairs $(\alpha,\usetr{\{v\}}{\acfra})$, where $v\in G^0$ is an infinite emitter. In fact, their groupoid will coincide with $\Gamma(\ftight,\sigma)$ if and only if the range of every edge is finite. To see this, note that if every range is finite, then every element of $\acf$ is a finite subset of $G^0$, and therefore, by Lemma \ref{lem:filter.with.finite.set}, every ultrafilter is principal. On the other hand, if there is an edge $e$ such that $r(e)$ is infinite, then by Zorn's lemma, there exists an ultrafilter $\ft$ containing all cofinite subsets of $r(e)$. Notice that, since we are assuming that the ultragraph has no sinks, all elements of $\ft$ satisfies (ii) of Proposition \ref{prop:tight.filters} and $\ft$ does not correspond to any vertex, and therefore the filter given by the pair $(e,\ft)$ has no correspondence in the groupoid defined in \cite{MR2457327}. See also Proposition~\ref{prop:correspondence.ultrafilters} and  Example~\ref{picapau}.
\end{remark}

\subsection{The groupoid of an ultragraph that satisfies Condition~(RFUM2)}

The tight spectrum (and hence the groupoid) associated with a general ultragraph is quite technical, which makes it hard to use in some applications. For ultragraphs that satisfy Condition~(RFUM2), we can avoid using filters altogether, and build a groupoid which is more conducive to applications, see \cite{GDD, tasca2020kms} (cf. \cite{BCW} where the description of the groupoid in the graph case is given). The unit space of this groupoid (also known as the boundary path space), with the associated shift map, is proposed to be a version of a shift of finite type over an infinite alphabet in \cite{MR3938320, tasca2020kms}, and the dynamics of these shifts is studied in \cite{SoboG, gonccalves2018li, gonccalves2019ultragraph}, among other papers. In this subsection, we show that the unit space of the groupoid defined in \cite{tasca2020kms} agrees with the tight spectrum $\ftight$ associated with the ultragraph. Consequently, the groupoid introduced in \cite{GDD, tasca2020kms} agrees with the groupoid we described in Subsection~\ref{vento}.

\subsubsection{The boundary path space of an ultragraph that satisfies Condition~(RFUM2)} \label{sec:boundary.path.RFUM2}

In this subsection we recall the definition of the boundary path space of an ultragraph that satisfies Condition~(RFUM2), as defined in \cite{tasca2020kms}. We start with the definition of minimal infinite emitters.

\begin{definicao} \label{def_G^0}
	Let $\G$ be an ultragraph. For each $A\in \G^0$, we define $\varepsilon(A):=\{e\in \G^1:s(e)\in A\}$. We say that $A\in \G^0$ is an infinite emitter if $|\varepsilon(A)|=\infty$. Otherwise we say that $A$ is a finite emitter. 
 We say that $A$ is a \emph{minimal infinite emitter} if $A$ is an infinite emitter, contains no proper subsets (in $\G^0$) that are infinite emitters, and contains no proper subsets (in $\G^0$) which are finite emitters and have
	infinite cardinality. We denote by $A_\infty$ the set of all minimal infinite emitters in $\G^0$.
\end{definicao}

\begin{remark} Notice that the notion of minimal infinite emitters is meaningless if we allow relative complements in $\G^0$, but this is not the case, as we pointed out in Remark~\ref{rmk:no.relative.complement}. Nevertheless this point highlights the subtleties involved in the definition of $\G^0$.
\end{remark}

The set of sinks in an ultragraph is denoted by $G^0_s$. Let $\G^0_s:=\displaystyle\bigsqcup_{v_i\in G^0_s} \{\{v_i\}\}\subseteq \G^0$, that is, $\G^0_s$ is the collection of all singletons of $\G^0$ whose element is a sink. We now define the sets that are minimal sinks.

\begin{definicao}\label{tortuga}
	Let $\G$ be an ultragraph and $A\in \G^0$. We say that $A$ is a \emph{minimal sink} if $|A|=\infty$, $|\varepsilon(A)|<\infty$ and $A$ has no subsets (in $\G^0$) with infinite cardinality. We denote by $A_s$ the set of all minimal sinks in $\G^0$. 
\end{definicao}

Define the \textit{boundary path space} as the set \[X:=\p^\infty \sqcup X_{min}\sqcup X_{sin},\]
where 
\[X_{min}:=\{ (\alpha,A)\in\p:|\alpha|\geq 1, A\in A_\infty\}\cup
	\{ (A,A)\in \p^0:A\in A_\infty\},\]
	\[X_{sin}:=\{ (\alpha,A)\in\p:|\alpha|\geq 1, A\in \G^0_s\sqcup A_s\}\cup \{ (A,A)\in \p^0:A\in \G_s^0\sqcup A_s\}.\]
Let $X_{fin}:=X_{min}\sqcup X_{sin}$. We define a topology on $X$ as follows.
For each $(\beta,B)\in \p$, let  
$$D_{(\beta,B)}:=\left\{y\in X: y=\beta\gamma ; s(\gamma)\in B\right\},$$
and for each $(\beta,B)\in X_{fin}$, $F\subseteq \varepsilon(B)$ finite, and $S\subseteq B\cap G_{s}^0$ finite, define 
\begin{align*}
D_{(\beta,B),F,S}:=\left\{(\beta,B)\right\}\sqcup&\left\{ y\in X:y=\beta\gamma',|\gamma'|\geq 1, \gamma_1'\in\varepsilon (B)\setminus F \right\}\\ 
\sqcup &\left\{y\in X_{sin}:y=(\beta,\{v\}):v\in B\backslash S\right\}.
\end{align*}	
Then the collection of cylinders \[\big\{\{ D_{(\beta,B)}:(\beta,B)\in\p, |\beta|\geq 1\}\cup\{D_{(\beta,B),F,S}:(\beta,B)\in X_{fin},F\subseteq\varepsilon(B),S\subseteq B,|F|,|S|<\infty\}\big\}\] is a countable basis for a topology on $X$. 
If $\G$ satisfies Condition~(RFUM2) (defined below), then these cylinders are compact, \cite[Proposition~3.22]{tasca2020kms}.

We recall the definition of Condition~(RFUM2):

{\textbf{Condition (RFUM2):}}\label{RFUM2} We say that an ultragraph $\G$ satisfies Condition~(RFUM2) if for each edge $e\in \G^1$ its range can be written as $$r(e)=\bigcup_{n=1}^k A_n,$$
where $A_n$ is either a minimal infinite emitter, or a minimal sink, or a singleton formed by a sink or a regular vertex.

\begin{remark}
For ultragraphs with no sinks Condition~(RFUM2) as defined above coincides with Condition~(RFUM) of \cite{MR3938320}.
\end{remark}

\subsubsection{The tight spectrum for an ultragraph that satisfies Condition~(RFUM2)}

In this subsection we  describe the tight spectrum of an ultragraph that satisfy Condition~(RFUM2) and identify it with the space $X$ of the previous subsection. Before we prove this we need a sequence of auxiliary results.

Recall that $\acf$ is the smallest family of subsets of $G^0$ that contains $\G^0$ and is closed under relative complements, finite unions and finite intersections (see Definition~\ref{prop_vert_gen}). We have the following description of $\acf$.

\begin{lema}\label{lem:description.B}
Let $\acf$ be as above. Then
\begin{equation}\label{eq:description.B}
\acf=\left\{\bigcup_{i=1}^n A_i\setminus B_i \mid n\in\N, \text{ and } A_i,B_i\in\G^0\ \forall i\in\{1,\ldots,n\}\right\}.
\end{equation}
Moreover, for any finite path $\alpha$ on $\G$,
\[\acfra=\left\{\bigcup_{i=1}^n A_i\setminus B_i \mid n\in\N, \text{ and } A_i,B_i\in\G^0,\ A_i,B_i\scj r(\alpha) \ \forall i\in\{1,\ldots,n\}\right\}.\]
\end{lema}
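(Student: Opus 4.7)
The plan is to prove the two descriptions by a containment argument, with the bulk of the work going into showing that the right-hand side of \eqref{eq:description.B} is already closed under the three operations that define $\acf$, and then applying minimality.

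Let $\tilde{\acf}$ denote the right-hand side of \eqref{eq:description.B}. The inclusion $\tilde{\acf}\scj\acf$ is immediate: each $A_i\setminus B_i$ lies in $\acf$ because $A_i,B_i\in\G^0\scj\acf$ and $\acf$ is closed under relative complements; hence any finite union does too. For the reverse inclusion, I will verify that $\tilde{\acf}$ contains $\G^0$ and is closed under finite unions, finite intersections, and relative complements; minimality of $\acf$ then forces $\acf\scj\tilde\acf$. Taking $n=1$, $A_1\in\G^0$, and $B_1=\emptyset\in\G^0$ shows $\G^0\scj\tilde{\acf}$. Closure under finite unions is built into the defining formula (concatenate the indexed unions). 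Closure under finite intersections follows from the identity $(A_1\setminus B_1)\cap(A_2\setminus B_2)=(A_1\cap A_2)\setminus(B_1\cup B_2)$ together with the distributive law $(\bigcup_i X_i)\cap(\bigcup_j Y_j)=\bigcup_{i,j}(X_i\cap Y_j)$, using that $\G^0$ is closed under finite intersections and finite unions.

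The main step is closure of $\tilde{\acf}$ under relative complements, which I expect to be the only nontrivial verification. Given $X=\bigcup_{i=1}^n A_i\setminus B_i$ and $Y=\bigcup_{j=1}^m C_j\setminus D_j$ in $\tilde{\acf}$, I would write
\[
X\setminus Y=\bigcup_{i=1}^n\bigcap_{j=1}^m\bigl[(A_i\setminus B_i)\setminus(C_j\setminus D_j)\bigr],
\]
and reduce to computing a single term. For this, observe that
\[
(A\setminus B)\setminus(C\setminus D)=\bigl(A\setminus(B\cup C)\bigr)\cup\bigl((A\cap D)\setminus B\bigr),
\]
which is obtained by writing $(C\setminus D)^c=C^c\cup D$ and distributing. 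Both summands have the form $A'\setminus B'$ with $A',B'\in\G^0$, since $\G^0$ is closed under finite unions and intersections; so each term lies in $\tilde{\acf}$. The outer finite intersection and finite union then stay inside $\tilde{\acf}$ by the closure properties already established, proving $X\setminus Y\in\tilde{\acf}$.

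For the ``moreover'' part, given any $A\in\acfra$, write $A=\bigcup_{i=1}^n A_i\setminus B_i$ using \eqref{eq:description.B}. Since $A\scj r(\alpha)$, we have $A=A\cap r(\alpha)$, and a direct check (using that if $x\in r(\alpha)$ then $x\notin B_i\cap r(\alpha)$ iff $x\notin B_i$) yields
\[
(A_i\setminus B_i)\cap r(\alpha)=\bigl(A_i\cap r(\alpha)\bigr)\setminus\bigl(B_i\cap r(\alpha)\bigr).
\]
Both $A_i\cap r(\alpha)$ and $B_i\cap r(\alpha)$ lie in $\G^0$ (since $r(\alpha)\in\G^0$ and $\G^0$ is closed under intersections) and are subsets of $r(\alpha)$, giving the desired representation. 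The reverse containment is obvious since each such union lies in $\acf$ and is contained in $r(\alpha)$.
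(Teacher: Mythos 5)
Your proof is correct and follows essentially the same route as the paper: both define the right-hand side as a candidate family, note it contains $\G^0$ and sits inside $\acf$, verify closure under the three operations, and invoke minimality. The only difference is that the paper dismisses the closure computations as ``basic properties of operations on sets,'' whereas you spell out the key identities (notably $(A\setminus B)\setminus(C\setminus D)=\bigl(A\setminus(B\cup C)\bigr)\cup\bigl((A\cap D)\setminus B\bigr)$), all of which check out.
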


\begin{proof}
Let $\acfg{C}$ be the set defined in the right hand side of Equation~\ref{eq:description.B}. Clearly $\acfg{C}\scj \acf$, and since $\emptyset\in\G^0$, we have that $\G^0\scj\acfg{C}$. Using that $\G^0$ is closed under finite intersections and finite unions, as well as some basic properties of operations on sets, we see that $\acfg{C}$ is closed under relative complements, finite unions and finite intersections. By the minimality of $\acf$, we conclude that $\acfg{C}=\acf$. The last part is similar.
\end{proof}

Recall that $A_s$ and $A_{\infty}$ denote the minimal sinks and the minimal infinite emitters, respectively (see Definition~\ref{def_G^0} and Definition~\ref{tortuga}).

\begin{lema}\label{lem:intersection.minimal}
Suppose that $A,B\in\G^0$.
\begin{enumerate}[(i)]
    \item If $A,B\in A_{\infty}$, then $A=B$ or $|A\cap B|<\infty$.
    \item If $A,B\in A_s$, then $A=B$ or $|A\cap B|<\infty$.
    \item If $A\in A_{\infty}$ and $B\in A_s$, then $|A\cap B|<\infty$.
\end{enumerate}
\end{lema}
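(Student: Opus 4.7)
The plan is to treat all three statements uniformly, exploiting the fact that $\G^0$ is closed under finite intersections, so $A\cap B\in\G^0$ in each case, and then pushing on the minimality hypotheses.

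For part (i), assume $A\neq B$. I would first rule out $A\cap B=A$ and $A\cap B=B$: either one of these would give a strict inclusion $A\subsetneq B$ or $B\subsetneq A$ between two minimal infinite emitters (contradicting minimality of the larger one, since the smaller is an element of $\G^0$ that is itself an infinite emitter). Hence $A\cap B$ is a proper subset of $A$ lying in $\G^0$. By the definition of minimal infinite emitter applied to $A$, the set $A\cap B$ is neither an infinite emitter nor a finite emitter of infinite cardinality, so $|A\cap B|<\infty$.

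For part (ii), the same case split shows that if $A\neq B$ then $A\cap B$ is a proper subset of $A$ in $\G^0$. The definition of minimal sink (Definition~\ref{tortuga}) states that $A$ has no proper subsets in $\G^0$ of infinite cardinality, so $|A\cap B|<\infty$ immediately.

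For part (iii), I would show $A\cap B$ is a proper subset of $A$ as follows: if $A\subseteq B$ then $\varepsilon(A)\subseteq\varepsilon(B)$, which is impossible because $|\varepsilon(A)|=\infty$ while $|\varepsilon(B)|<\infty$; and if $B\subseteq A$ with $B\neq A$, then $B\in\G^0$ is a proper subset of $A$ with $|B|=\infty$ and $|\varepsilon(B)|<\infty$, i.e.\ $B$ is a finite emitter of infinite cardinality strictly contained in $A$, contradicting minimality of $A$ as an infinite emitter. Hence $A\cap B\subsetneq A$ in $\G^0$, and as in (i) the minimal-infinite-emitter property of $A$ forces $|A\cap B|<\infty$.

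The argument is essentially bookkeeping with the two minimality definitions, so no single step should be technically hard; the main thing to be careful about is correctly quoting both clauses in the definition of a minimal infinite emitter (forbidding proper subsets that are infinite emitters, \emph{and} forbidding proper subsets that are finite emitters of infinite cardinality), because in parts (i) and (iii) only the combination of the two clauses yields finiteness of $|A\cap B|$.
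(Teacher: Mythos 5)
Your proof is correct, and it is self-contained where the paper is not: for items (i) and (ii) the paper simply cites Propositions~3.6 and~3.9 of \cite{tasca2020kms}, whereas you derive both directly from the two minimality definitions, which is a genuine (if modest) gain in transparency; the key observation in each case --- that $A\cap B\in\G^0$ and that $A\cap B=A$ or $A\cap B=B$ would produce a proper containment violating the minimality of the larger set --- is exactly right, and you correctly invoke \emph{both} clauses of Definition~\ref{def_G^0} to pass from ``$A\cap B$ is a proper subset of a minimal infinite emitter'' to ``$|A\cap B|<\infty$''. (You also read the minimal-sink condition in Definition~\ref{tortuga} as forbidding \emph{proper} subsets of infinite cardinality, which is the intended reading, since $A$ itself is such a subset.) For item (iii) the paper argues more directly: $\varepsilon(A\cap B)\subseteq\varepsilon(B)$ is finite, so $A\cap B$ is a finite emitter and the second minimality clause for $A$ forces $|A\cap B|<\infty$; your route through first establishing $A\cap B\subsetneq A$ reaches the same place, though your second subcase ($B\subsetneq A$) is superfluous --- if $B\subsetneq A$ then $A\cap B=B\subsetneq A$ anyway, so only the exclusion of $A\subseteq B$ is needed --- but the argument you give there is itself valid, so this is harmless redundancy rather than an error.
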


\begin{proof}
(i) This statement follows from \cite[Proposition 3.6]{tasca2020kms}.

(ii) This is the statement of \cite[Proposition 3.9]{tasca2020kms}.

(iii) Notice that $A\cap B\in\G^0$ and $\varepsilon(A\cap B)\leq\varepsilon(B)<\infty$. By the definition of $A_{\infty}$, $|A\cap B|<\infty$.
\end{proof}

In the next couple of lemmas we characterize the ultrafilters in $\acfrg{\alpha}$.

\begin{lema}\label{lem:ultrafilters.rfum.1}
Suppose that $\G$ is an ultragraph that satisfies condition (RFUM2). Let $\alpha\in\awstar$ and $\ft$ be an ultrafilter on $\acfrg{\alpha}$.
\begin{enumerate}[(i)]
    \item If there exists $A\in\ft$ with $|A|<\infty$, then $\ft=\usetr{\{v\}}{\acfrg{\alpha}}$ for some unique $v\in r(\alpha)$.
    \item If for every $A\in\ft$ it holds that $|A|=\infty$, then there exists a unique $B\in A_s\sqcup A_{\infty}$ such that $B\in \ft$.
    
\end{enumerate}
\end{lema}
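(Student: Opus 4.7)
The plan is as follows. Part (i) is essentially a uniqueness refinement of Lemma~\ref{lem:filter.with.finite.set}, which already produces some $v\in A$ with $\ft=\usetr{\{v\}}{\acfra}$ (and necessarily $v\in r(\alpha)$, since $A\scj r(\alpha)$). For uniqueness, if $\usetr{\{v\}}{\acfra}=\usetr{\{w\}}{\acfra}$ then both $\{v\}$ and $\{w\}$ belong to $\ft$, hence so does $\{v\}\cap\{w\}$; since $\emptyset\notin\ft$ this forces $v=w$.

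For part (ii), the main idea is to exploit Condition~(RFUM2) in order to decompose arbitrary generalized vertices into a controlled pool of ``building blocks''. First I would establish the preliminary structural claim that every $A\in\G^0$ admits a representation as a finite union of elements of $\mathcal{M}:=A_\infty\cup A_s\cup\{\{v\}\mid v\in G^0\}$. This proceeds by structural induction on $\G^0$: singletons already lie in $\mathcal{M}$; each $r(e)$ is such a finite union by (RFUM2); finite unions are trivially preserved; and closure under finite intersections reduces, by distribution, to showing that $M\cap N$ is a finite union of $\mathcal{M}$-elements for $M,N\in\mathcal{M}$, which is exactly the content of Lemma~\ref{lem:intersection.minimal}, since two distinct members of $A_\infty\cup A_s$ meet in a finite set, and any finite set is a finite union of singletons.

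With this claim in hand, pick any $A\in\ft$ and write $A=\bigcup_{i=1}^n A_i\setminus B_i$ as in Lemma~\ref{lem:description.B}, with $A_i,B_i\in\G^0$ contained in $r(\alpha)$. Since the labelled space associated with $\G$ is normal, $\acfra$ is a Boolean algebra, so by Proposition~\ref{prop:ultrafilter} the ultrafilter $\ft$ is prime, and some $A_i\setminus B_i\in\ft$. Applying the preliminary claim to $A_i$ and distributing the relative complement produces $A_i\setminus B_i=\bigcup_j(M_j\setminus B_i)$ with $M_j\in\mathcal{M}$, so primeness again gives some $M_j\setminus B_i\in\ft$. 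This element is infinite by hypothesis on $\ft$, which rules out $M_j$ being a singleton and forces $M_j\in A_s\cup A_\infty$; upward closure of $\ft$ then gives $M_j\in\ft$, as desired. Uniqueness is immediate from Lemma~\ref{lem:intersection.minimal}: two distinct elements of $A_s\sqcup A_\infty$ intersect in a finite set, which cannot belong to $\ft$ since $B_1\cap B_2\in\ft$ whenever $B_1,B_2\in\ft$.

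The main technical hurdle is the preliminary decomposition claim for $\G^0$, which requires establishing closure under intersection of the class of finite unions of $\mathcal{M}$-elements; this is where both Condition~(RFUM2) and the three-case analysis of Lemma~\ref{lem:intersection.minimal} are crucially used. Once that claim is in place, the rest is a routine application of primeness and upward closure of the ultrafilter $\ft$.
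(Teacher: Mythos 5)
Your proof is correct and follows essentially the same route as the paper's: part (i) is handled identically via Lemma~\ref{lem:filter.with.finite.set}, and part (ii) rests on the same two pillars, namely decomposing generalized vertices into minimal infinite emitters, minimal sinks and finite sets and then invoking primeness of the ultrafilter, with uniqueness coming from Lemma~\ref{lem:intersection.minimal} and closure of $\ft$ under intersections. The only difference is that the paper imports the decomposition of elements of $\G^0$ as \cite[Lemma 7.12]{tasca2020kms}, whereas you prove it directly by structural induction on $\G^0$ using Condition~(RFUM2) and Lemma~\ref{lem:intersection.minimal}; this makes your argument more self-contained but is not a different method.
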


\begin{proof}
(i) This follows from Lemma \ref{lem:filter.with.finite.set}. The uniqueness of $v$ follows from the fact that $\{v\}\cap\{w\}=\emptyset$ if $v\neq w$ and $\emptyset\notin\ft$.

(ii) Since $\ft$ does not contain a finite subset of $G^0$, by Lemma~\ref{lem:description.B} and the fact it is a filter, $\ft$ contains an element of $\G^0$. By \cite[Lemma 7.12]{tasca2020kms} and since $\ft$ is a prime filter, there exists $B\in A_s\sqcup A_\infty$ such that $B\in\ft$. The uniqueness of such $B$ follows from Lemma \ref{lem:intersection.minimal} and the fact that $\ft$ is closed under intersections.
\end{proof}

\begin{lema}\label{lem:ultrafilters.rfum.2}
Suppose that $\G$ is an ultragraph that satisfies condition (RFUM2) and let $\alpha\in\awstar$. If $B\in A_s\cup A_{\infty}$ is such that $|B|=\infty$ and $B\scj r(\alpha)$, then there exists a unique ultrafilter $\ft$ in $\acfra$ such that $B\in\ft$ and $|A|=\infty$ for all $A\in\ft$.
\end{lema}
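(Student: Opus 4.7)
The plan is to construct the required ultrafilter explicitly as the collection of elements of $\acfra$ whose intersection with $B$ is cofinite in $B$, and then derive both existence and uniqueness from a single structural dichotomy about how elements of $\acfra$ meet the minimal set $B$.

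Specifically, I will set
\[
\ft := \{C\in\acfra \mid B\setminus C \text{ is finite}\},
\]
and first verify that $\ft$ is a filter. This is routine: $B\in\ft$; $\emptyset\notin\ft$ since $|B|=\infty$; $\ft$ is upward closed in $\acfra$; and $B\setminus(C_1\cap C_2)=(B\setminus C_1)\cup(B\setminus C_2)$ shows closure under meets. Moreover every $C\in\ft$ contains $B\cap C$, which is $B$ minus a finite set, hence is infinite. So $\ft$ satisfies the side conditions of the statement, leaving only ultrafilterhood and uniqueness.

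The main technical step is the following dichotomy, which I will prove using Lemma~\ref{lem:description.B}: for every $C\in\acfra$, the set $C\cap B$ is either finite or cofinite in $B$. Writing $C=\bigcup_{i=1}^n (A_i\setminus A_i')$ with $A_i,A_i'\in\G^0$ contained in $r(\alpha)$, I will analyze each term $(A_i\cap B)\setminus A_i' = (A_i\cap B)\setminus(A_i'\cap B)$. Since $A_i\cap B\in\G^0$ is a subset of $B$, the minimality of $B$ (in $A_\infty$ or $A_s$) forces either $A_i\cap B=B$ or $|A_i\cap B|<\infty$; the same dichotomy applies to $A_i'\cap B$. A short case check then shows each term is empty, finite, or cofinite in $B$, and the dichotomy is preserved under finite unions (cofinite union anything is cofinite; a finite union of finite sets is finite). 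The only subtle point here is marshalling the minimality conditions correctly — I expect this to be the main obstacle — but the definitions of $A_\infty$ and $A_s$ precisely guarantee that every proper $\G^0$-subset of $B$ has finite cardinality.

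With the dichotomy in hand, ultrafilterhood of $\ft$ follows from Proposition~\ref{prop:ultrafilter}: if $C\in\acfra$ meets every element of $\ft$, then in particular $C\cap(B\setminus F)\neq\emptyset$ for every finite $F\subseteq B$ (such $B\setminus F$ lies in $\ft$), which forces $C\cap B$ to be infinite, hence cofinite in $B$ by the dichotomy, so $C\in\ft$. For uniqueness, let $\ft'$ be any ultrafilter in $\acfra$ with $B\in\ft'$ and every element of $\ft'$ infinite. For any $C\in\ft'$, we have $C\cap B\in\ft'$, so $C\cap B$ is infinite, hence cofinite in $B$ by the dichotomy, whence $C\in\ft$. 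Thus $\ft'\subseteq \ft$, and since both are ultrafilters (equivalently, maximal filters), $\ft'=\ft$.
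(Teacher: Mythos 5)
Your proof is correct, but it takes a genuinely different route from the paper's. The filter you construct is the same one (your $\{C\in\acfra\mid B\setminus C\text{ finite}\}$ coincides with the paper's $\{C\mid C\supseteq B\setminus A$ for some finite $A\in\acfra\}$, since $\acfra$ is a Boolean algebra), but the way you certify it as the unique ultrafilter differs. You isolate a single dichotomy --- every $C\in\acfra$ meets $B$ in a set that is either finite or cofinite in $B$ --- and derive it directly from Lemma~\ref{lem:description.B} together with the observation that every proper $\G^0$-subset of the minimal set $B$ has finite cardinality (which is exactly what Definitions~\ref{def_G^0} and~\ref{tortuga} guarantee). The paper instead decomposes the candidate set into minimal pieces via \cite[Lemma~7.12]{tasca2020kms} (which relies on Condition~(RFUM2)) and controls intersections of distinct minimal sets via Lemma~\ref{lem:intersection.minimal}, and it proves uniqueness by a separate primeness argument ($B\scj(B\setminus A)\cup A$ with $A$ finite forces $B\setminus A$ into the other ultrafilter). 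Your dichotomy handles ultrafilterhood and uniqueness uniformly, avoids the external decomposition lemma, and in fact never invokes Condition~(RFUM2), so your argument establishes the statement for an arbitrary ultragraph. The only step worth spelling out is that $B\setminus F\in\acfra$ for every finite $F\scj B$ (as $F$ is a finite union of singletons, hence lies in $\G^0$, and $\acfra$ is closed under relative complements), which you need in order to feed the sets $B\setminus F$ into the criterion of Proposition~\ref{prop:ultrafilter}.
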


\begin{proof}
For the existence, we let 
\[\ft=\{C\in \acfrg{\alpha}\mid \text{there exists } A\in\acfrg{\alpha}\text{ with }|A|<\infty\text{ such that } C\supseteq B\setminus A\}.\]
It is easy to check that $\ft$ is a filter such that $B\in\ft$ and $|C|=\infty$ for all $C\in\ft$. By Proposition \ref{prop:ultrafilter}, in order to prove that $\ft$ is an ultrafilter, we will show that if $D\in\acfra$ is such that $D\cap C\neq\emptyset$ for every $C\in\ft$, then $D\in\ft$. Using Lemma \ref{lem:description.B} and \cite[Lemma 7.12]{tasca2020kms}, we can write $D=\bigcup_{i=1}^n X_i\setminus Y_i$ where $X_i$ is either finite or $X_i\in A_s\cup A_{\infty}$ and $X_i\neq X_j$ if $i\neq j$. By Lemma \ref{lem:intersection.minimal}, there exists $i_0$ such $X_{i_0}=B$, otherwise $D\cap (B\setminus A)=\emptyset$, where $A=\bigcup_{i=1}^n(B\cap X_i)$ is finite set. We may assume that $i_0=1$. Also, by \cite[Lemma 7.12]{tasca2020kms}, $Y_1=\bigcup_{j=1}^m A_j$, where $A_j$ is either finite or $A_j\in A_s\cup A_{\infty}$. Notice that $A_j\neq B$ for all $j$, otherwise $X_1\setminus Y_1=\emptyset$, and arguing as above we would find $A\scj G^0$ finite such that $D\cap (B\setminus A)=\emptyset$. Let $Z_1=\bigcup_{j=1}^m(B\cap A_i)$ and $Z_2=\bigcup_{i=2}^n(B\cap X_i)$ and notice they are both finite sets. It follows that 
\[D\supseteq D\cap (B\setminus Z_2)= (B\setminus Y_1)\cap (B\setminus Z_2)=(B\setminus Z_1)\cap (B\setminus Z_2)=B\setminus(Z_1\cup Z_2),\]
which implies that $D\in\ft$.

For the uniqueness, we let $\ftg{H}$ be another ultrafilter in $\acfra$ such that $B\in\ftg{H}$ and $|A|=\infty$ for all $A\in \ftg{H}$. We prove that $\ft\scj\ftg{H}$. For that it is sufficient to prove that $B\setminus A\in\ftg{H}$ for all $A\in\acfra$ finite. Fixed such $A$, notice that $B\scj B\cup A=(B\setminus A)\cup A$. Since $\ftg{H}$ is prime and $A$ is finite (so $A\notin \ftg{H})$, we have that $B\setminus A\in\ftg{H}$. Since $\ft$ is also an ultrafilter, we conclude that $\ft=\ftg{H}$.
\end{proof}

Recall that $\widehat{\acfra}$ denotes the Stone dual of $\acfra$ (see Subsection~\ref{subsection:filters.and.characters}). Joining the previous two lemmas we get the following result.

\begin{proposicao}\label{prop:correspondence.ultrafilters}
Suppose that $\G$ is an ultragraph that satisfies condition (RFUM2) and let $\alpha\in\awstar$. There is a bijective correspondence between $\{\{v\}\mid v\in r(\alpha)\}\cup\{B\in A_s\cup A_{\infty}\mid B\scj r(\alpha),\ |B|=\infty\}$ and $\widehat{\acfra}$ that maps $\{v\}$, where $v \in r(\alpha)$, to $\usetr{\{v\}}{\acfrg{\alpha}}$, and maps $B \in A_s\cup A_{\infty}$ such that $B\scj r(\alpha)$ and $|B|=\infty$ to $\{C\in \acfrg{\alpha}\mid \text{there exists } A\in\acfrg{\alpha}\text{ with }|A|<\infty\text{ such that } C\supseteq B\setminus A\}$.
\end{proposicao}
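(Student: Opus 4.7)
The plan is to obtain this bijection as a direct packaging of the two preceding lemmas, which together already classify the ultrafilters in $\acfra$. I would organize the argument around three steps: well-definedness of the proposed map, surjectivity, and injectivity, with the heavy lifting delegated to Lemmas~\ref{lem:ultrafilters.rfum.1} and \ref{lem:ultrafilters.rfum.2}.

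First I would verify well-definedness. For $v\in r(\alpha)$, the set $\{v\}$ is an atom of $\acfra$ (since singletons lie in $\G^0\scj\acfra$), so the principal filter $\usetr{\{v\}}{\acfrg{\alpha}}$ is automatically an ultrafilter. For $B\in A_s\cup A_\infty$ with $B\scj r(\alpha)$ and $|B|=\infty$, the set prescribed by the formula is shown to be an ultrafilter of $\acfra$ in the existence part of Lemma~\ref{lem:ultrafilters.rfum.2}. Hence the assignment maps into $\widehat{\acfra}$.

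Next, for surjectivity, let $\ft\in\widehat{\acfra}$. Lemma~\ref{lem:ultrafilters.rfum.1} divides ultrafilters into two mutually exclusive cases. In case (i), some element of $\ft$ is finite, and $\ft=\usetr{\{v\}}{\acfrg{\alpha}}$ for a unique $v\in r(\alpha)$, which is precisely the image of $\{v\}$. In case (ii), every element of $\ft$ is infinite, and there is a unique $B\in A_s\sqcup A_\infty$ with $B\in\ft$; since $B\in\ft$ we automatically have $B\scj r(\alpha)$ and $|B|=\infty$, so $B$ lies in the domain of our map. The uniqueness part of Lemma~\ref{lem:ultrafilters.rfum.2} then forces $\ft$ to coincide with the ultrafilter given by the formula applied to $B$, which closes surjectivity.

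For injectivity, I would first observe that the two pieces of the domain map into disjoint subsets of $\widehat{\acfra}$: singleton-type ultrafilters contain the finite set $\{v\}$, while $B$-type ultrafilters consist entirely of infinite sets (by construction, via Lemma~\ref{lem:ultrafilters.rfum.2}). Within the singleton piece, distinct $v\neq w$ give different principal ultrafilters because $\{v\}\cap\{w\}=\emptyset$ contradicts the filter property. Within the $B$-piece, distinct $B\neq B'$ give different ultrafilters since the image of $B$ contains $B$, whereas by Lemma~\ref{lem:intersection.minimal} we have $|B\cap B'|<\infty$, so the image of $B'$ cannot contain $B$ (being closed upward under inclusion of infinite supersets of sets of the form $B'\setminus A$ with $A$ finite, none of which contains $B$). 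The main subtlety, rather than a true obstacle, is keeping the bookkeeping clean between the two cases and invoking the right uniqueness statement; all the genuinely technical work, in particular the use of Condition~(RFUM2) via \cite[Lemma 7.12]{tasca2020kms} and the decomposition in Lemma~\ref{lem:description.B}, has already been absorbed into Lemmas~\ref{lem:ultrafilters.rfum.1} and \ref{lem:ultrafilters.rfum.2}.
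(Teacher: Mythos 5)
Your proof is correct and takes essentially the same route as the paper, which simply states that the proposition follows from Lemmas~\ref{lem:ultrafilters.rfum.1} and \ref{lem:ultrafilters.rfum.2}; you have merely spelled out the well-definedness, surjectivity, and injectivity bookkeeping that the paper leaves implicit. The details you supply (atomicity of singletons, disjointness of the two cases, and the use of Lemma~\ref{lem:intersection.minimal} for injectivity on the $B$-piece) are all sound.
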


\begin{proof}
The proof follows from Lemmas \ref{lem:ultrafilters.rfum.1} and \ref{lem:ultrafilters.rfum.2}.
\end{proof}

We now prove the main result of this subsection.

\begin{teorema}\label{thm:ftight.homeo.X}
Suppose that $\G$ is an ultragraph that satisfies condition (RFUM2). For each $\alpha\in\awstar$, let $\phi_{\alpha}:\widehat{\acfra}\to \{\{v\}\mid v\in r(\alpha)\}\cup \{B\in A_s\cup A_{\infty}\mid B\scj r(\alpha)\}$ be the correspondence given in Proposition \ref{prop:correspondence.ultrafilters}. Then, the map $\Phi:\ftight\to X$ given by
\[\Phi(\xia)=\begin{cases}
(\phi_{\eword}(\xi_0),\phi_{\eword}(\xi_0)), & \text{if }\alpha=\eword; \\
(\alpha,\phi_{\alpha}(\xi_{|\alpha|})), & \text{if }0<|\alpha|<\infty; \\
\alpha, & \text{if }|\alpha|=\infty
\end{cases}\]
is a homeomorphism.
\end{teorema}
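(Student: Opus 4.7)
The plan is to show that $\Phi$ is a bijection and then that it and its inverse are continuous by matching basic open sets.

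Bijectivity is almost formal from the preceding results: Proposition \ref{prop:tight.filters} partitions $\ftight$ into the infinite-type filters (each determined by an infinite path) and the finite-type filters (whose ultrafilter $\xi_{|\alpha|}$ satisfies one of conditions (i)-(iii) of that proposition), and Proposition \ref{prop:correspondence.ultrafilters} identifies those ultrafilters as either $\usetr{\{v\}}{\acfra}$ for some $v\in r(\alpha)$ or ones generated by some $B\in A_s\cup A_\infty$ with $B\subseteq r(\alpha)$ and $|B|=\infty$. The three tightness conditions correspond respectively to $v\in G^0_s$ (giving a point of $X_{sin}$), $B\in A_\infty$ (giving $X_{min}$), and $B\in A_s$ (giving $X_{sin}$), while infinite-type filters match $\p^\infty$; in each case the inverse of $\Phi$ is read off directly from the same enumeration.

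For continuity I verify that for every basic cylinder $D$ of $X$ the preimage $\Phi^{-1}(D)$ is a basic open set of $\ftight$. Using Remark \ref{remark.when.in.xialpha} and a case analysis on whether $\xi^\gamma$ is of infinite type, of finite type with $|\gamma|>|\beta|$, or of finite type with $|\gamma|=|\beta|$, one obtains
\[\Phi^{-1}(D_{(\beta,B)})=V_{(\beta,B,\beta)},\quad \Phi^{-1}(D_{(\beta,B),F,S})=V_{(\beta,B,\beta)\,:\,\{(\beta e,r(e),\beta e)\}_{e\in F},\,\{(\beta,\{v\},\beta)\}_{v\in S}}.\]
For openness, take a basic open $V_{\mathbf{e}}=V_{(\alpha,A,\alpha)\,:\,(\alpha^1,A^1,\alpha^1),\ldots,(\alpha^n,A^n,\alpha^n)}$ of $\ftight$; after discarding exclusions not below $(\alpha,A,\alpha)$ one may assume each $\alpha^i=\alpha\alpha^{i\prime}$ with $A^i\subseteq r(A,\alpha^{i\prime})$. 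Then Lemma \ref{lem:description.B}, Condition~(RFUM2) and \cite[Lemma 7.12]{tasca2020kms} allow one to write $A$, up to a finite symmetric difference, as a disjoint finite union of singletons and elements of $A_s\cup A_\infty$; the exclusions translate into finitely many forbidden first edges and finitely many forbidden sinks on each piece, so $\Phi(V_{\mathbf{e}})$ becomes a finite union of basic cylinders of the form $D_{(\alpha,C_j)}$ or $D_{(\alpha,C_j),F_j,S_j}$.

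The main technical hurdle is this decomposition in the openness step: one must turn the order-theoretic restriction ``$\xi$ avoids the idempotents $(\alpha^i,A^i,\alpha^i)$'' into finitely many combinatorial exclusions (edges in some $F\subseteq\varepsilon(B)$ and sinks in some $S\subseteq B\cap G^0_s$) that fit the definition of the cylinders of $X$. Condition~(RFUM2) is exactly what makes this possible, since it forces every generalized vertex to decompose as a \emph{finite} union of the three controlled types of pieces (singletons, minimal sinks and minimal infinite emitters). Without (RFUM2) this finite decomposition breaks down and the image of a basic open set of $\ftight$ can fail to be a finite union of basic cylinders of $X$.
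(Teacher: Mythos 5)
Your bijectivity argument and your computation of $\Phi^{-1}(D_{(\beta,B)})$ and $\Phi^{-1}(D_{(\beta,B),F,S})$ coincide with the paper's proof. The divergence, and the gap, is in the openness step. You claim that after normalizing the exclusions, $\Phi(V_{\mathbf{e}})$ is a finite union of cylinders of the form $D_{(\alpha,C_j)}$ or $D_{(\alpha,C_j),F_j,S_j}$, i.e.\ all based at the word $\alpha$ and carrying only first-level exclusions. This normal form does not exist in general. First, the sets $D_{(\beta,B),F,S}$ are only defined for $(\beta,B)\in X_{fin}$, so over a piece $C_j=\{v\}$ with $v$ a regular vertex there is no basic cylinder based at $\alpha$ that forbids an edge. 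Second, and more seriously, an exclusion $(\alpha\alpha^{i\prime},A^i,\alpha\alpha^{i\prime})$ with $|\alpha^{i\prime}|\geq 2$, or with $|\alpha^{i\prime}|=1$ and $A^i$ a proper subset of the range of that edge, does not translate into ``forbidden first edges and forbidden sinks'': it constrains what happens \emph{after} the first edge beyond $\alpha$. For instance, if $v$ emits a single edge $e$ with $r(e)=\{w_1,w_2\}$, then $\Phi\bigl(V_{(\eword,\{v\},\eword):(e,\{w_1\},e)}\bigr)$ is essentially $D_{(e,\{w_2\})}$, a cylinder based at the word $e$, and it is not a finite union of cylinders based at $\eword$. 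So the argument as written does not establish that $\Phi$ is open.

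The repair is short and is essentially what the paper does. Since each $D_{(\beta,B)}$ is compact by \cite[Proposition~3.22]{tasca2020kms} (this is where (RFUM2) enters) and $X$ is Hausdorff, each $D_{(\beta,B)}$ is clopen; combined with bijectivity and your identity $\Phi^{-1}(D_{(\beta,B)})=V_{(\beta,B,\beta)}$, this gives $\Phi(V_{\mathbf{e}})=D_{(\alpha,A)}\cap\bigcap_i D_{(\alpha^i,A^i)}^{c}$, which is open with no combinatorial decomposition required. The paper argues dually: the restriction of $\Phi$ to the compact set $V_{(\beta,B,\beta)}$ is a continuous bijection onto the Hausdorff space $D_{(\beta,B)}$, hence a homeomorphism, and since these sets cover $X$ the inverse $\Psi$ is a local homeomorphism and therefore continuous. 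Either way, compactness replaces the decomposition you attempted; the splitting of $A$ into singletons and elements of $A_s\cup A_{\infty}$ via Lemma~\ref{lem:description.B} and \cite[Lemma 7.12]{tasca2020kms} is the content of Proposition~\ref{prop:correspondence.ultrafilters}, which you already invoke for bijectivity, and is not needed again here.
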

   
\begin{proof}
We start by showing that $\Phi$ is well-defined. If $\xia$ is a filter of infinite type, then it is clear that $\Phi(\xia)\in X$. If $\xia$ is a filter of finite type and $\phi_{\alpha}(\xi_{|\alpha|})\in A_s\cup A_{\infty}$, then $\Phi(\xia)$ is also an element of $X.$ Finally, we have to show that if $\xia$ is a filter of finite type and $\phi_{\alpha}(\xi_{|\alpha|})$ is a singleton, say $\{v\}$, then $v$ is either a sink or an infinite emitter. This follows from Theorem \ref{thm:TightFiltersType}, since in this case $\{v\}\in\xi_{|\alpha|}$. 

We now build an inverse for $\Phi$. Denote by  $G^0_{sg}$ the set of vertices that are either an infinite emitter or a sink. Define $\Psi:X\to \ftight$ as follows:

\begin{enumerate}[(a)]
    \item For $(A,A)$, where $A\in\{\{v\}\mid v\in G^0_{sg}\}\cup A_s\cup A_{\infty}$, we let $\Psi(A,A)$ be the filter associated to the pair $(\eword,\phi_{\eword}^{-1}(A))$.
    \item For $(\alpha,A)$, where $A\in\{\{v\}\mid v\in r(\alpha)\cap G^0_{sg}\}\cup \{B\in A_s\cup A_{\infty}\mid B\scj r(\alpha)\}$, we let $\Psi(\alpha,A)$ be the filter associated to the pair $(\alpha,\phi_{\alpha}^{-1}(A))$.
    \item For $\alpha$ an infinite path, we let $\Psi(\alpha)$ be the filter associated to $\alpha$.
\end{enumerate}

We  check that $\Psi$ is well-defined. For (a) and (b), fix $\alpha\in\awstar$, where $\alpha=\eword$ for item $(a)$. We have four cases to consider for $A$. If $A=\{v\}$ with $v$ a sink, then all elements of $\phi_{\alpha}^{-1}(A)$ contains $\{v\}$ as a subset and so they satisfy Theorem \ref{thm:TightFiltersType}(ii)(b). If $A=\{v\}$ with $v$ an infinite emitter, then all elements of $\phi_{\alpha}^{-1}(A))$ are infinite emitters and so they satisfy Theorem \ref{thm:TightFiltersType}(ii)(a). If $A\in A_s$, then it contains an infinite number of sinks and hence, by the description of $\phi_{\alpha}^{-1}(A)$, we see that all its elements satisfy Theorem \ref{thm:TightFiltersType}(ii)(b). Analogously, if $A\in A_{\infty}$ and $|A|=\infty$, all elements of $\phi_{\alpha}^{-1}(A))$ satisfy Theorem \ref{thm:TightFiltersType}(ii)(a). In all cases, we get a tight filter. For (c), $\Psi(\alpha)$ is well-defined by Proposition \ref{prop:tight.filters} and Theorem \ref{thm:TightFiltersType}(i).

To see that $\Psi=\Phi^{-1}$, for finite paths, we use \cite[Propositions~4.3~and~4.4]{MR3648984} and Proposition~\ref{prop:correspondence.ultrafilters}, and, for infinite paths, we use \cite[Theorem~4.13]{MR3648984} and Proposition \ref{prop:tight.filters}.

Let us prove now that $\Phi$ is continuous. For this we consider the basis for  the topology on $X$ given in  in Section~\ref{sec:boundary.path.RFUM2}, and we show that $\Phi^{-1}$ of each such basic open set is a basic open set for $\ftight$. If $(\beta,B)\in\up$, then, by the description of $\Phi^{-1}$ as $\Psi$, Remark \ref{rmk:relative.range} and \cite[Proposition~4.1]{MR3648984}, we see that $\Phi^{-1}(D_{(\beta,B)})=V_{(\beta,B,\beta)}$. Analogously, if $(\beta,B)\in X_{fin}$, $F=\{f_1,\ldots,f_n\}\scj\varepsilon(B)$ and $S=\{s_1,\ldots,s_m\}\scj B\cap G_s^0$, then \[\Phi^{-1}(D_{(\beta,B),F,S})=V_{(\beta,B,\beta):(\beta f_1,r(f_1),\beta f_1),\ldots,(\beta f_n,r(f_n),\beta f_n),(\beta,\{s_1\},\beta),\ldots,(\beta,\{s_m\},\beta)}.\]
Hence the inverse image of each basic open set of $X$ by $\Phi$ is also open, from where we conclude that $\Phi$ is continuous.

It remains to show that $\Psi$ is continuous. For that, it is sufficient to show that $\Psi$ is a local homeomorphism. Notice that the sets $D_{(\beta,B)}$, for $(\beta,B)\in\mathfrak{p}$, cover $X$, and that, for each $(\beta,B)\in\mathfrak{p}$, we can restrict $\Phi$ to a bijection between $V_{(\beta,B,\beta)}$ and $D_{(\beta,B)}$. Since $V_{(\beta,B,\beta)}$ is compact, $X$ is Hausdorff and $\Phi$ is continuous, this restriction is a homeomorphism with inverse given by a restriction of $\Psi$. This implies that $\Psi$ is a local homeomorphism and therefore continuous.

\end{proof}

\begin{exemplo}\label{picapau}
Let $\G$ be the ultragraph such that $G^0=\{v_i\}_{i\in \N}$ and edges $e_i$ such that $s(e_0)=v_0$, $r(e_0)= \{v_i: i = 1, 2, \ldots \}$ and, for $i\neq 0$, $s(e_i)=v_i$ and $r(e_i)=\{ v_0, v_i\}$. Notice that this ultragraph has no sinks and satisfies condition (RFUM2). The set $r(e_0)$ is a minimal infinite emitter so that $(e_0,r(e_0))\in X$. In order to describe the corresponding element of $\ftight$, we note that $\acfrg{e_0}$ consists only of the finite and cofinite subsets of $r(e_0)$. Then, by Proposition~\ref{prop:correspondence.ultrafilters} and what we have just noticed, $\phi_{e_0}(r(e_0))$ is the set of all cofinite subsets of $r(e_0)$, which is not a principal filter. Using the homeomorphism from Theorem~\ref{thm:ftight.homeo.X}, $(e_0,r(e_0))$ corresponds to the element $\xi$ given by the pair $(e_0,\phi_{e_0}^{-1}(r(e_0))$. Finally, notice that, by the  description of convergence in $X$ given in \cite[Corollary 3.20]{tasca2020kms}, the sequence of infinite paths $(e_0e_n^{\infty})_{n\in\N^*}$ converges to $(e_0,r(e_0))$. Notice that in the space defined just after \cite[Proposition~18]{MR2457327}, there is no element corresponding to $\xi$ (see Remark~\ref{rmk:ultraset}).
\end{exemplo}

\subsubsection{Identifying the groupoids}

In \cite{tasca2020kms} the groupoid associated with an ultragraph that satisfies Condition~(RFUM2) is the Renault-Deaconu groupoid obtained from a partially defined shift map on the space $X$, in a similar fashion to what we described for general ultragraphs in Subsection~\ref{vento}. In this subsection we identify our general groupoid described in Subsection~\ref{vento} with the groupoid described in \cite{tasca2020kms}.
For completeness sake we briefly recall the definition of the groupoid defined in $\cite{tasca2020kms}$.

	\begin{definicao}\label{grupoide}
		Let $\G$ be an ultragraph that satisfies Condition~(RFUM2). We define the \emph{groupoid associated to $\G$} by $G(X,\tilde{\sigma}):=\{(x,m-n,y):x,y\in X;m,n\in \N;\tilde{\sigma}^m(x)=\tilde{\sigma}^n(y) \}$, where $\tilde{\sigma}$ is the shift map on $X$ (see \cite{tasca2020kms}). The source function is defined by $s(x,k,y)=y$ and the range by $r(x,k,y)=x$. Multiplication is given by $(x,k,y)(y,l,z)=(x,k+l,z)$ and inversion by $(x,k,y)^{-1}=(y,-k,x)$. The topology considered is that of cylinder sets, see \cite{tasca2020kms}.
	\end{definicao}

\begin{proposicao}
Suppose that $\G$ is an ultragraph that satisfies condition (RFUM2). Then the groupoids $G(X,\tilde{\sigma})$ and $\Gamma(\ftight,\sigma)$ are isomorphic as topological groupoids.
\end{proposicao}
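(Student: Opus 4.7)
The plan is to transport the homeomorphism $\Phi:\ftight\to X$ of Theorem~\ref{thm:ftight.homeo.X} to the level of the Renault--Deaconu groupoids. Since both $G(X,\tilde{\sigma})$ and $\Gamma(\ftight,\sigma)$ are built by the same construction from a local homeomorphism on a locally compact Hausdorff space, it will suffice to show that $\Phi$ is equivariant with respect to the two shifts, i.e.\ that $\tilde{\sigma}\circ\Phi=\Phi\circ\sigma$ on $\ftight^{(1)}$, and that $\Phi(\ftight^{(n)})=\tilde{\sigma}$-domain of $X$ of order $n$ for every $n$.

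First, I would define
\[ \Psi:\Gamma(\ftight,\sigma)\to G(X,\tilde{\sigma}),\qquad \Psi(\xi,k,\eta)=(\Phi(\xi),k,\Phi(\eta)). \]
The intertwining property, once established, ensures that if $\sigma^m(\xi)=\sigma^n(\eta)$ then $\tilde{\sigma}^m(\Phi(\xi))=\tilde{\sigma}^n(\Phi(\eta))$, so that $\Psi$ is well defined and (using $\Phi^{-1}$) bijective. Compatibility with multiplication and inversion is immediate from the definitions, since both groupoid structures act trivially on the $\mathbb{Z}$--coordinate and simply match source/range. Continuity in both directions will follow from $\Phi$ being a homeomorphism together with the fact that the topology on each groupoid has a basis of the form $\mathcal{V}(U,V,m,n)$, which is sent to an analogous basic open set by $\Psi$ and $\Psi^{-1}$.

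The main technical step is the equivariance $\tilde{\sigma}\circ\Phi=\Phi\circ\sigma$. I would verify this case-by-case according to the three cases used in the definition of $\sigma$ in Subsection~\ref{vento}: (a) if $\xi=\xi^{\alpha}$ with $\alpha\in\G^{\infty}$, then $\Phi(\xi)=\alpha$ and $\Phi(\sigma(\xi))$ is the filter associated with $\alpha_2\alpha_3\cdots$, which matches $\tilde{\sigma}(\alpha)$ by the definition of the shift on $X$; (b) if $\alpha=\alpha_1\cdots\alpha_n$ with $n\geq 2$, then $\Phi(\xi)=(\alpha,\phi_{\alpha}(\xi_n))$ and $\sigma(\xi)$ is the filter associated to $(\alpha_2\cdots\alpha_n,\xi_n)$, so $\Phi(\sigma(\xi))=(\alpha_2\cdots\alpha_n,\phi_{\alpha_2\cdots\alpha_n}(\xi_n))$, which equals $\tilde{\sigma}(\Phi(\xi))$ because $\acfrg{\alpha}=\acfrg{\alpha_n}=\acfrg{\alpha_2\cdots\alpha_n}$ (as noted at the start of Section~\ref{vento}), so the same ultrafilter is singled out by the same element of $A_s\cup A_\infty\cup\{\{v\}\}$; (c) if $|\alpha|=1$, a similar direct check using the definition of $\sigma$ on edges and the fact that $\tilde{\sigma}$ collapses a one-edge ultrapath to its range vertex/set handles this boundary case.

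The hard part, which one must execute carefully, is book-keeping in case (b) to ensure that the identification of ultrafilters given by $\phi_\alpha$ in Proposition~\ref{prop:correspondence.ultrafilters} is compatible with truncating the prefix of $\alpha$; this ultimately reduces to the identity $\acfra=\acfrg{\alpha_n}$ combined with the fact that both ultrafilters correspond to the same element of $\{\{v\}\mid v\in r(\alpha)\}\cup A_s\cup A_\infty$. Once equivariance is settled, the rest is essentially formal: the Renault--Deaconu construction is functorial for equivariant homeomorphisms, so $\Psi$ is automatically a topological groupoid isomorphism, completing the proof.
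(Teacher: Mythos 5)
Your proposal is correct and follows essentially the same route as the paper: the paper's proof simply observes that $\Phi$ from Theorem~\ref{thm:ftight.homeo.X} conjugates the two shifts and that both groupoids are Renault--Deaconu groupoids, which is exactly the equivariance-plus-functoriality argument you spell out in more detail.
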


\begin{proof}
It easy to check that $\Phi$ from Theorem \ref{thm:ftight.homeo.X} gives a conjugation between the shift on $X$ and the shift on $\ftight$, and since both groupoids are Renault-Deaconu groupoids, the result follows.
\end{proof}




\section{Partial actions for ultragraphs via labelled spaces}

Partial actions are the subject of intense research. They are interesting on their own and their associated algebras are used to model many known algebras, in particular the ones associated with combinatorial objects (see \cite{MR2419901, MR3699795}). In the context of ultragraphs, partial actions are used to  describe  the  C*-algebras  associated  with ultragraphs that satisfy Condition~(RFUM) or Conditon~(RFUM2), see \cite{MR3938320, tasca2020kms}. In the purely algebraic setting, the ultragraph Leavitt path algebra associated with a general ultragraph is realized as a partial skew group ring associated to a partial action in \cite{goncalves_royer_2019}. The goal of this section is to show that the topological partial action  associated with a labelled space (as in \cite[Proposition 3.12]{MR4109095}) gives rise to an algebraic partial action (via locally constant functions with compact support) that is equivalent to the algebraic partial action defined in \cite{goncalves_royer_2019}. Furthermore, using the identification of $\ftight$ with $X$ from Theorem~\ref{thm:ftight.homeo.X}, we also observe that for ultragraphs that satisfy Condition~(RFUM2) the partial action defined in \cite{MR4109095} generalizes that of \cite{tasca2020kms}.

\subsection{The topological partial action on the tight filters of $E(S)$} \label{s:partialaction}
In \cite{MR4109095} a partial action by the free group generated by the alphabet $\alf$ is defined  on the tight spectrum $\ftight$ of a labelled space. In this section we outline this construction and refer the reader to \cite[Section 3]{MR4109095} for the details. For the definition of a topological partial action we refer the reader to \cite[Proposition 2.5 and Definition 5.1]{MR3699795}. 

Fix a weakly-left resolving labelled space $\lspace$ and let $\mathbb{F}$ be the free group generated by $\alfg{A}$ (identifying the identity of $\mathbb{F}$ with $\eword$). Then, by \cite[Proposition 3.12]{MR4109095}, for every $t\in \mathbb{F}$ there is a compact open set $V_t\subseteq \ftight$ and a homeomorphism $\varphi_t:V_{t^{-1}}\to V_t$ such that 
\begin{equation} \label{eqn:labelled.top.partial.action}
\Delta=(\{V_t\}_{t\in\F},\{\varphi_t\}_{t\in\F})    
\end{equation}
is a topological partial action of $\F$ on $\ftight$. In particular, $V_\eword=\ftight$ and if 
$\alpha,\beta\in \awstar$, then $V_\alpha=V_{(\alpha,r(\alpha),\alpha)}, V_{\alpha^{-1}}=V_{(\eword,r(\alpha),\eword)}$, and $V_{(\alpha\beta^{-1})^{-1}}= \varphi_{\beta^{-1}}^{-1}(V_{\alpha^{-1}})$, with $V_{(\alpha\beta^{-1})^{-1}}\neq\emptyset$ if and only if $r(\alpha)\cap(\beta)\neq\emptyset$ (\cite[Lemma 3.10]{MR4109095}). 

For the labelled space associated to an ultragraph, we can intuitively describe the map $\varphi_{\alpha\beta^{-1}}$, for $\alpha,\beta\in\awstar$, as cutting $\beta$ from the beginning of an element $\xi\in V_{\alpha^{-1}\beta}$ and then gluing $\alpha$ in front of it. For filters of finite type we just have to take into account the last filter of the corresponding complete family. In most cases, the last filter is kept fixed, unless the empty word is involved. If $\beta$ is the labelled path associated to an element $\xi\in V_{\alpha^{-1}\beta}$, then by cutting $\beta$, we get the filter associated to the pair $(\eword,\usetr{\xi_{|\beta|}}{\acf})$, and by gluing $\alpha$ afterwards, we get the filter associated to the pair $(\alpha,\ft)$, where $\ft=\{A\cap r(\alpha)\mid A\in \usetr{\xi_{|\beta|}}{\acf}\}$.

\begin{remark}
Using the map $\Phi$ of Theorem~\ref{thm:ftight.homeo.X} and the above description of the partial action, it follows that $\Delta$ is a generalization of the partial action for ultragraphs without sinks satisfying (RFUM) as in \cite{MR3938320} and the generalization to ultragraphs satisfying (RFUM2) in \cite{tasca2020kms}.
\end{remark}

We let 
\begin{equation} \label{dfn:trans.groupoid}
    \mathbb{F} \ltimes_{\Delta} \ftight=\{(t,\xi)\in \mathbb{F} \times \ftight: \xi\in V_{t}\}
\end{equation}
denote the transformation groupoid associated with $\Delta$, as defined in \cite{MR2045419}. Then $\Gamma(\ftight,\sigma)$ defined in Section \ref{groupoids} and $\mathbb{F} \ltimes_{\Delta} \ftight$ are isomorphic as topological groupoids, \cite[Theorem 5.5]{MR4109095}. In Theorem~\ref{kart} and Section~\ref{sec:cores} it will be convenient to use $\mathbb{F} \ltimes_{\Delta} \ftight$ as the groupoid associated with a labelled space.

\subsection{The algebraic partial action associated with an ultragraph}

In this subsection we show that the induced algebraic partial action arsing from the action defined in Subsection~\ref{s:partialaction} and the partial action defined in \cite{goncalves_royer_2019} are equivalent. We connect the necessary concepts from \cite{goncalves_royer_2019} with the concepts we developed so far, but we will refrain from reproducing here the whole construction given in \cite{goncalves_royer_2019}. We maintain the notation of the previous section.

For the remainder of the section $R$, is a commutative unital ring and, for a Hausdorff space $X$, $\Lc(X,R)$ denotes the set of locally constant $R$-valued functions on $X$ with compact support.

Given an ultragraph $\G$, we let
\begin{equation} \label{eqn:setY}
Y=\mathfrak{p}^{\infty}\cup\{(\alpha,v)\mid \alpha\in\awstar,\ v\in G^0_s\cap r(\alpha)\}.
\end{equation}

\begin{remark} \label{rem:sets.Y.and.X}
The set $Y$ is essentially the same as $X$ considered in \cite{goncalves_royer_2019}. The only difference is that we are using $(\eword,v)$ instead of $(v,v)$, recalling that $\eword$ is the empty word.
\end{remark}

For each $e=(\alpha,A,\alpha)\in E(S)\setminus\{0\}$, we define
\[Y_e=\{y\in Y \mid |y|>|\alpha|,\ y_{1,|\alpha|}=\alpha,\ s(y_{|\alpha|+1})\in A\}\cup\{(\alpha,v)\mid v\in A\cap G^0_s\}.\]
We also define $Y_0=\emptyset.$

\begin{remark} \label{rem:SameOpenSets}
Each set defined directly after \cite[Notation~3.4]{goncalves_royer_2019} can be seen as $Y_e$ for some $e\in E(S)$: for $\alpha,\beta\in\awstar$ and $A\in\G^0$, the set $X_{\alpha\beta^{-1}}$ corresponds to $Y_{(\alpha,r(\alpha)\cap r(\beta),\alpha)}$, $X_A$ corresponds to $Y_{(\eword,A,\eword)}$ and $X_{\alpha A}$ corresponds to $Y_{(\alpha,A,\alpha)}$.
\end{remark}

The following lemma is a simplification of \cite[Lemma~3.6]{goncalves_royer_2019}. 

\begin{lema} \label{lem:intersection.Y}
For every $e,f\in E(S)$, $Y_e\cap Y_f=Y_{ef}$.
\end{lema}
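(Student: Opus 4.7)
My plan is to prove the equality by a case analysis driven by the multiplication rule on $E(S)$ recalled in Subsection~\ref{subsection:inverse.semigroup}. First I dispose of the trivial cases: if $e=0$ or $f=0$, both sides equal $\emptyset = Y_0$ by definition. So assume $e=(\alpha,A,\alpha)$ and $f=(\gamma,B,\gamma)$ are non-zero elements of $E(S)$. Since every $y\in Y_e$ begins with the labelled path $\alpha$ (either as a prefix of an infinite path or as the first coordinate of a pair), and similarly for $Y_f$ with $\gamma$, if neither of $\alpha,\gamma$ is a beginning of the other then $Y_e\cap Y_f=\emptyset$; on the other hand, in this same situation the product $ef$ is $0$ in $S$, so $Y_{ef}=\emptyset$ as well.

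It then remains to treat the case where one of $\alpha,\gamma$ is a beginning of the other. When $\alpha=\gamma$, the product is $ef=(\alpha,A\cap B,\alpha)$ (understood to be $0$ if $A\cap B=\emptyset$), and the equality $Y_e\cap Y_f=Y_{ef}$ is immediate from the definition of $Y_{(\alpha,\cdot,\alpha)}$. The interesting case is when, say, $\alpha$ is a proper beginning of $\gamma$, so that $\gamma=\alpha\alpha'$ with $|\alpha'|\geq 1$; then the multiplication rule gives $ef=(\gamma,\,r(A,\alpha')\cap B,\,\gamma)$ (again set to $0$ if this intersection is empty). The remaining case is symmetric.

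For the inclusion $Y_e\cap Y_f\subseteq Y_{ef}$ in this situation, take $y\in Y_e\cap Y_f$; then $y$ begins with $\gamma$. If $|y|>|\gamma|$, the $Y_f$ condition gives $s(y_{|\gamma|+1})\in B$, and the $Y_e$ condition gives $s(y_{|\alpha|+1})\in A$. Thus the segment $y_{|\alpha|+1}\cdots y_{|\gamma|}$ is a path in $\G$ with label $\alpha'$ starting in $A$, so $r(y_{|\gamma|})\subseteq r(A,\alpha')$; since $s(y_{|\gamma|+1})\in r(y_{|\gamma|})$, this puts $s(y_{|\gamma|+1})\in r(A,\alpha')\cap B$, as required. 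The case $y=(\gamma,v)$ with $v\in B\cap G^0_s$ is handled analogously, yielding $v\in r(A,\alpha')\cap B\cap G^0_s$.

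For the reverse inclusion, suppose $y\in Y_{ef}$. Then $y$ begins with $\gamma$, hence with $\alpha$, and $r(A,\alpha')\cap B$ is non-empty. The only point that requires attention is verifying $s(y_{|\alpha|+1})\in A$ so that $y\in Y_e$. This is where I use the decisive feature of ultragraphs: every edge with label $\alpha'_1$ has the same source $s(\alpha'_1)$. Since $r(A,\alpha')\neq\emptyset$ there exists a path $\lambda$ in $\G$ with $\lbf(\lambda)=\alpha'$ and $s(\lambda)\in A$, forcing $s(\alpha'_1)\in A$; thus $s(y_{|\alpha|+1})=s(\alpha'_1)\in A$, as desired. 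The finite case $y=(\gamma,v)$ is analogous. The only mild obstacle is correctly bookkeeping the finite and infinite branches of $Y$, but the argument uses nothing beyond the multiplication in $E(S)$ and the ultragraph rigidity of labels.
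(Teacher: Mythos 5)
Your proof is correct. Note that the paper itself gives no proof of this lemma: it is stated as a simplification of \cite[Lemma~3.6]{goncalves_royer_2019} and the verification is deferred to that reference, so there is no in-paper argument to compare against. Your direct verification from the definitions is sound: the case analysis matches the multiplication table of $E(S)$ (incomparable words give $ef=0$ and empty intersection; equal words reduce to $A\cap B$; and the prefix case reduces to $r(A,\alpha')\cap B$), and you correctly isolate the one nontrivial point in the reverse inclusion, namely that $r(A,\alpha')\neq\emptyset$ forces $s(\alpha'_1)\in A$ because in an ultragraph all edges carrying the label $\alpha'_1$ share the source $s(\alpha'_1)$ (this is exactly Remark~\ref{rmk:relative.range}, which gives $r(A,e)=r(e)$ if $s(e)\in A$ and $\emptyset$ otherwise). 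The bookkeeping of the two kinds of elements of $Y$ (infinite paths and pairs $(\beta,v)$ with $v$ a sink) is also handled correctly, including the observation that a pair $(\gamma,v)$ with $|\gamma|>|\alpha|$ falls under the first clause of the definition of $Y_e$.
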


Let $\acfg{D}$ be the Boolean algebra in $\powerset{Y}$ generated by the family $\{Y_e\}_{e\in E(S)}$, and let $\widehat{\acfg{D}}$ be the the set of all ultrafilters on $\acfg{D}$, that is, the Stone dual of $\acfg{D}$. We will show that $\widehat{\acfg{D}}$ is homeomorphic to $\ftight$, but for this we need a few auxiliary results first.

For $n\in\N$ and $e,e_1,\ldots,e_n$, we define $Y_{e:e_1,\ldots,e_n}=Y_e\cap Y_{e_1}^c\cdots\cap Y_{e_n}^c$. Using Lemma \ref{lem:intersection.Y}, we can see that every element of $\acfg{D}$ is a finite union of sets of the form $Y_{e:e_1,\ldots,e_n}$.

\begin{lema}\label{lem:elements.Y.tight}
Let $n\in\N$ and $e,e_1,\ldots,e_n\in E(S)$ be given.
\begin{enumerate}[(i)]
    \item For $\alpha\in\mathfrak{p}^{\infty}$ and $\xi$ the only tight filter associated to $\alpha$, we have that $\xi\in V_{e:e_1,\ldots,e_n}$  if and only if $\alpha\in Y_{e:e_1,\ldots,e_n}$.
    
    \item For $(\alpha,v)\in Y$, where $\alpha\in\awstar$ and $v\in G^0_s$, let $\xi$ be the tight filter associated to $(\alpha,\usetr{\{v\}}{\acfra})$. Then, $\xi\in V_{e:e_1,\ldots,e_n}$ if and only if $(\alpha,v)\in Y_{e:e_1,\ldots,e_n}$.
\end{enumerate}
In particular, items (i) and (ii) define an injective map $\iota:Y\to\ftight$ such that $\iota(Y_{e:e_1,\ldots,e_n})=\iota(Y)\cap V_{e:e_1,\ldots,e_n}$.
\end{lema}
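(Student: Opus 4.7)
The plan is to verify the two displayed equivalences first in the $n=0$ case (just $V_e$ versus $Y_e$ for a single idempotent), then bootstrap the general case by taking complements, and finally assemble the injectivity and the set identity for $\iota$. Throughout, the main tool is Remark~\ref{remark.when.in.xialpha}, which translates membership of $(\beta,A,\beta)$ in a filter $\xi^{\gamma}$ into the pair of conditions that $\beta$ is a beginning of $\gamma$ and $A\in\xi_{|\beta|}$.

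For (i), let $\alpha\in\mathfrak{p}^{\infty}$ and let $\xi$ be the unique tight filter associated with $\alpha$ (Proposition~\ref{prop:tight.filters}), so by Lemma~\ref{lem:infinite.family} its complete family is $\xi_n=\usetr{\{s(\alpha_{n+1})\}}{\acfrg{\alpha_{1,n}}}$. Fixing $e=(\beta,A,\beta)\in E(S)$, Remark~\ref{remark.when.in.xialpha} gives $\xi\in V_e$ iff $\beta=\alpha_{1,|\beta|}$ and $s(\alpha_{|\beta|+1})\in A$. Since $|\alpha|=\infty>|\beta|$ automatically, these are exactly the conditions defining $\alpha\in Y_e$.

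For (ii), take $(\alpha,v)\in Y$ with $v\in G^0_s\cap r(\alpha)$, and let $\xi$ be the finite-type tight filter with word $\alpha$ and $\xi_{|\alpha|}=\usetr{\{v\}}{\acfrg{\alpha}}$ (this is tight by Proposition~\ref{prop:tight.filters}(i)). Applying Lemma~\ref{lem:ultrafilter.equals.principal} to the truncated family on $\{0,\ldots,|\alpha|-1\}$, one obtains $\xi_n=\usetr{\{s(\alpha_{n+1})\}}{\acfrg{\alpha_{1,n}}}$ for each $n<|\alpha|$. Fixing $e=(\beta,A,\beta)$, Remark~\ref{remark.when.in.xialpha} splits the condition $\xi\in V_e$ into two subcases: either $\beta=\alpha$ and $v\in A$, matching the second clause in the definition of $Y_e$; or $|\beta|<|\alpha|$, $\beta=\alpha_{1,|\beta|}$ and $s(\alpha_{|\beta|+1})\in A$, matching the first clause. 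This gives the equivalence for a single $e$, and the version with complements follows because both $\xi\in V_{e:e_1,\ldots,e_n}$ and $(\alpha,v)\in Y_{e:e_1,\ldots,e_n}$ are, by definition, the conjunctions of the positive condition with the negations of the analogous conditions for each $e_i$.

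For the map $\iota$, define $\iota(\alpha)=\xi$ in case $\alpha\in\mathfrak{p}^{\infty}$, and $\iota(\alpha,v)$ to be the filter described in (ii); both land in $\ftight$ by Proposition~\ref{prop:tight.filters}. Injectivity follows by first reading off the associated labelled path from $\iota(y)$ to distinguish the infinite-type case from the finite-type case (and to recover $\alpha$ inside each case), and then, in the finite-type case, observing that $\usetr{\{v_1\}}{\acfrg{\alpha}}=\usetr{\{v_2\}}{\acfrg{\alpha}}$ forces $v_1=v_2$, since otherwise $\{v_1\}\cap\{v_2\}=\emptyset$ would belong to the filter. Finally, the identity $\iota(Y_{e:e_1,\ldots,e_n})=\iota(Y)\cap V_{e:e_1,\ldots,e_n}$ is read off directly from (i) and (ii). I anticipate no serious obstacle: the only place where slight care is needed is the backward propagation of the ultrafilter description $\xi_n=\usetr{\{s(\alpha_{n+1})\}}{\acfrg{\alpha_{1,n}}}$ from the endpoint $\xi_{|\alpha|}$ in case (ii), but this is immediate from Lemma~\ref{lem:ultrafilter.equals.principal}.
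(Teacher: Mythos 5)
Your proof is correct and follows essentially the same route as the paper's: translate membership of an idempotent $(\beta,A,\beta)$ in the filter into the conditions ``$\beta$ is a beginning of the associated word and $A$ contains the relevant source (or the sink $v$)'' and match these against the definition of $Y_e$, with the complemented version and the properties of $\iota$ following formally. You simply make explicit the references (Remark~\ref{remark.when.in.xialpha}, Lemmas~\ref{lem:infinite.family} and~\ref{lem:ultrafilter.equals.principal}) that the paper leaves implicit in its ``Notice that\dots'' step.
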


\begin{proof}
(i) Notice that for $e=(\beta,B,\beta)$ to be an element of $\xi$, we must have that $\beta$ is a beginning of $\alpha$ and $s(\alpha_{|\beta|+1})\in B$. The result then follows from the definitions of $V_{e:e_1,\ldots,e_n}$ and $Y_{e:e_1,\ldots,e_n}$.

(ii) In this case for $e=(\beta,B,\beta)$ to be an element of $\xi$, either $\beta$ is a beginning of $\alpha$ with $|\beta|<|\alpha|$ and $s(\alpha_{|\beta|+1})\in B$, or $\beta=\alpha$ and $v\in B$. Again, the result follows from the definitions of $V_{e:e_1,\ldots,e_n}$ and $Y_{e:e_1,\ldots,e_n}$.

The last part follows immediately from (i) and (ii).
\end{proof}

For $\mathbf{e}=(e,e_1,\ldots,e_n)\in E(S)^+$, recall that $V_{\mathbf{e}}=V_{e:e_1,\ldots,e_n}$ as in Section \ref{subsection:filters.E(S)}. We also define $Y_{\mathbf{e}}=Y_{e:e_1,\ldots,e_n}$.

\begin{lema}\label{lem:Y.dense.tight}
The map $\iota:Y\to\ftight$ given by Lemma \ref{lem:elements.Y.tight} has dense image.
\end{lema}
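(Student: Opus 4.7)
The plan is to show that every nonempty basic open set $V_{\mathbf{e}}$ in $\ftight$ contains some element of $\iota(Y)$. By the final assertion of Lemma~\ref{lem:elements.Y.tight}, which says $\iota(Y_{\mathbf{e}}) = \iota(Y) \cap V_{\mathbf{e}}$, this reduces to proving $Y_{\mathbf{e}} \neq \emptyset$ whenever $V_{\mathbf{e}} \neq \emptyset$.

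Fix $\xi \in V_{\mathbf{e}}$ with associated labelled path $\alpha'$, and write $\mathbf{e} = (e, e_1, \ldots, e_n)$ with $e = (\alpha, A, \alpha)$ and $e_i = (\beta_i, B_i, \beta_i)$. Two cases are immediate: if $\xi$ is of infinite type, then $\alpha' \in \mathfrak{p}^\infty \subseteq Y$ and $\iota(\alpha') = \xi$; if $\xi$ is of finite type with $\xi_{|\alpha'|} = \usetr{\{v\}}{\acfrg{\alpha'}}$ for some sink $v \in G^0_s$, then $(\alpha', v) \in Y$ and $\iota((\alpha', v)) = \xi$. Only the remaining case is nontrivial: $\xi$ is of finite type and $\xi_{|\alpha'|}$ satisfies condition (ii) or (iii) of Proposition~\ref{prop:tight.filters}.

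In this remaining case, the strategy is to use the ultrafilter $\xi_{|\alpha'|}$ to build a different element of $Y_{\mathbf{e}}$. By Lemma~\ref{lem:ultrafilter.equals.principal}, for each $k < |\alpha'|$ the component $\xi_k$ is principal at $s(\alpha'_{k+1})$, so any tight filter whose labelled path extends $\alpha'$ agrees with $\xi$ on all levels below $|\alpha'|$; consequently the positive/negative membership constraints at these levels are preserved automatically. Set $A^* = A$ if $|\alpha|=|\alpha'|$ (otherwise $A^*=r(\alpha')$), $I_3 = \{i : \beta_i = \alpha'\}$, $B_i^* = r(\alpha') \setminus B_i \in \xi_{|\alpha'|}$ for $i \in I_3$, and $C_0 = A^* \cap \bigcap_{i \in I_3} B_i^* \in \xi_{|\alpha'|}$. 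If condition (iii) holds, then $|C_0 \cap G^0_s| = \infty$, and I pick any sink $v \in C_0$; then $(\alpha', v) \in Y_{\mathbf{e}}$ because $\iota((\alpha',v))$ has labelled path $\alpha'$, which is too short to contain any $\beta_i$ that properly extends $\alpha'$ as a beginning. If condition (ii) holds, then $|\varepsilon(C_0)| = \infty$, so I can choose an edge $f$ with $s(f) \in C_0$ that further avoids the finite set $\{\beta_{i, |\alpha'|+1} : \alpha' \text{ is a proper beginning of } \beta_i\}$, and then extend $\alpha' f$ inductively within $Y$: either the process continues to yield an infinite path in $\mathfrak{p}^\infty$, or it terminates at a sink $w$ in the range of the last chosen edge (always possible since ranges are nonempty and a vertex emits iff it is not a sink), giving an element of the form $(\gamma, w) \in Y$.

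The main bookkeeping obstacle is verifying that each exclusion $(\beta_i, B_i, \beta_i) \notin \iota(y)$ holds: indices with $\beta_i$ not comparable to $\alpha'$, or a proper beginning of $\alpha'$, are automatic via the principal-filter structure below level $|\alpha'|$; indices with $\beta_i = \alpha'$ are handled by $v \in B_i^*$ (respectively $s(f) \in B_i^*$); and indices with $\alpha'$ a proper beginning of $\beta_i$ are handled either because $y$'s labelled path is strictly shorter than $\beta_i$ (in the sink-terminating construction that uses $(\alpha',v)$), or because the choice $f \ne \beta_{i,|\alpha'|+1}$ forces $y$'s labelled path to diverge from $\beta_i$ at position $|\alpha'|+1$.
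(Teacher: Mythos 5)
Your proof is correct and follows essentially the same strategy as the paper's: reduce to nonempty basic open sets $V_{\mathbf{e}}$, handle infinite-type filters and finite-type filters of sink type directly via $\iota$, and otherwise intersect down to a set $C_0\in\xi_{|\alpha'|}$ (the paper's $A'$) that dodges the negative constraints at level $|\alpha'|$, then either pick a sink in it (case (iii)) or pick an emitted edge avoiding the finitely many forbidden continuations and extend arbitrarily to an element of $Y$ (case (ii)). Your bookkeeping of which exclusions are automatic via the principal filters below level $|\alpha'|$ is just a more explicit version of the paper's ``without loss of generality'' normalization of $\mathbf{e}$.
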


\begin{proof}
We prove that for every basic open set of the form $V_{\mathbf{e}}$, where $\mathbf{e}\in E(S)^+$, if $V_{\mathbf{e}}\neq\emptyset$, then there exists $y\in Y$ such that $\iota(y)\in V_{\mathbf{e}}$. Suppose then, that $\xia\in V_{\mathbf{e}}$ for some $\mathbf{e}\in E(S)^+$. If $|\alpha|=\infty$, then by Lemma \ref{lem:elements.Y.tight}, $\iota(\alpha)=\xia\in V_{\mathbf{e}}$.

Suppose now that $\xia$ is a filter of finite type and consider the three cases of Proposition~\ref{prop:tight.filters}. For case (i), we have that $\iota(\alpha,v)=\xia\in V_{\mathbf{e}}$ by Lemma~\ref{lem:elements.Y.tight}. For case (ii), suppose that $|\varepsilon(A)|=\infty$ for all $A\in\xi_{|\alpha|}$. Fix $A\in\xi_{|\alpha|}$ and assume without loss of generality that $\mathbf{e}=((\beta,B_0,\beta),(\alpha,B_1,\alpha),\ldots,(\alpha,B_k,\alpha),(\delta_1,C_1,\delta_1),\ldots,(\delta_l,C_l,\delta_l))$, where $\alpha=\beta\gamma$ for some $\gamma\in\awstar$, $r(B_0,\gamma)\in\xi_{|\alpha|}$ and for each $j=1,\ldots,l$, $\delta_j$ is either a beginning of $\alpha$ with $|\delta_j|<|\alpha|$, in which case $s(\alpha_{|\delta_j|+1})\notin C_j$, or $\delta_j$ is not a beginning of $\alpha$. Notice that $A':=(r(B_0,\gamma)\cap A)\setminus(B_1\cup\cdots\cup B_k)\in \xi_{|\alpha|}$ and therefore $|\varepsilon(A')|=\infty$. We can then choose $a\in \varepsilon(A')$ such that $a$ is not the $|\alpha|+1$ coordinate of any $\delta_j$. From $\alpha a$ we can build an element $y\in Y$ beginning with $\alpha a$ by adding edges until we find a sink or we add infinitely many edges and build an infinite pah. Then, by construction $y\in Y_{\mathbf{e}}$, and therefore by Lemma \ref{lem:elements.Y.tight}, $\iota(y)\in V_{\mathbf{e}}$.
Finally, for case (iii), if we let $A'$ as above, then $|A'\cap G^0_s|=\infty$. Analogous to case (ii), if we take any $v\in A' \cap G^0_s$, then $\iota(\alpha,v)\in V_{\mathbf{e}}$.
\end{proof}

\begin{lema}\label{lem:union.Y.and.V}
Let $\mathbf{e},\mathbf{f}^{(1)},\ldots,\mathbf{f}^{(m)}\in E(S)^+$, where $m\in\N$ with $m\geq 1$. If $Y_{\mathbf{e}}\scj \bigcup_{i=1}^m Y_{\mathbf{f}^{(i)}}$, then $V_{\mathbf{e}}\scj \bigcup_{i=1}^m V_{\mathbf{f}^{(i)}}$.
\end{lema}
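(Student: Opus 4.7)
The plan is to transfer the containment from $Y$ to $\ftight$ via the map $\iota$ of Lemma~\ref{lem:elements.Y.tight}, and then use density (Lemma~\ref{lem:Y.dense.tight}) together with the fact that each basic open set $V_{\mathbf{g}}$ is clopen. Applying $\iota$ to the hypothesis and using the identity $\iota(Y_{\mathbf{g}})=\iota(Y)\cap V_{\mathbf{g}}$ gives
\[
\iota(Y)\cap V_{\mathbf{e}} \;=\; \iota(Y_{\mathbf{e}}) \;\subseteq\; \bigcup_{i=1}^m \iota(Y_{\mathbf{f}^{(i)}}) \;=\; \iota(Y)\cap \bigcup_{i=1}^m V_{\mathbf{f}^{(i)}}.
\]

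Next, I would observe that every set of the form $V_{\mathbf{g}}=V_{g:g_1,\ldots,g_n}$ is clopen in $\ftight$. This is because, under the bijection between filters and characters in $\widehat{E(S)}_0\subseteq\{0,1\}^{E(S)}$, the set $V_{\mathbf{g}}$ corresponds to those characters taking the value $1$ at $g$ and $0$ at each $g_i$; since $\{0,1\}$ is discrete, each of these finitely many conditions defines a clopen subset of the product, so their intersection is clopen, and this property descends to the subspace $\ftight$. Consequently, $\bigcup_{i=1}^m V_{\mathbf{f}^{(i)}}$ is a finite union of clopen sets, hence closed, and the set $U:=V_{\mathbf{e}}\setminus \bigcup_{i=1}^m V_{\mathbf{f}^{(i)}}$ is open in $\ftight$.

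Putting these two ingredients together: the displayed inclusion above rewrites as $U\cap \iota(Y)=\emptyset$. Since $\iota(Y)$ is dense in $\ftight$ by Lemma~\ref{lem:Y.dense.tight}, the only open subset of $\ftight$ disjoint from $\iota(Y)$ is the empty set, so $U=\emptyset$. This gives $V_{\mathbf{e}}\subseteq \bigcup_{i=1}^m V_{\mathbf{f}^{(i)}}$, as desired.

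The main point requiring care is the clopen-ness of the basic open sets $V_{\mathbf{g}}$; once that is established, the rest is a routine density-plus-open-complement argument. I do not foresee serious technical obstacles beyond making the identification with characters (which is implicit in the paper's description of the topology on $\ftight$) precise enough to justify clopen-ness cleanly.
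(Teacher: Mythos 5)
Your proof is correct and rests on the same ingredients as the paper's own argument: the identity $\iota(Y_{\mathbf{g}})=\iota(Y)\cap V_{\mathbf{g}}$ from Lemma~\ref{lem:elements.Y.tight}, the density of $\iota(Y)$ in $\ftight$ from Lemma~\ref{lem:Y.dense.tight}, and the closedness of the sets $V_{\mathbf{f}^{(i)}}$. The difference is only in execution: where you observe that the open set $V_{\mathbf{e}}\setminus\bigcup_i V_{\mathbf{f}^{(i)}}$ misses the dense set $\iota(Y)$ and is therefore empty (justifying closedness via clopen-ness of the basic sets, which one could equally obtain from their compactness together with the Hausdorff property), the paper runs the same density argument pointwise, approximating $\xi\in V_{\mathbf{e}}$ by a sequence from $\iota(Y_{\mathbf{e}})$, applying the pigeonhole principle to land infinitely many terms in a single $Y_{\mathbf{f}^{(i)}}$, and then using compactness of $V_{\mathbf{f}^{(i)}}$ to conclude.
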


\begin{proof}
Let $\xi\in V_{\mathbf{e}}$. By Lemma \ref{lem:Y.dense.tight}, there exists a sequence $\{y_n\}_{n\in \N}$ in $Y$ such that $\iota(y_n)$ converges to $\xi$. Since $V_{\mathbf{e}}$ is open, by Lemma \ref{lem:elements.Y.tight}, we may assume that $y_n\in Y_{\mathrm{e}}$ for all $n\in \N$. By the pigeonhole principle, there exists $i\in\{1,\ldots,m\}$ such that $Y_{\mathbf{f}^{(i)}}$ contains $y_n$ for infinitely many $n\in\N$. We can then take a subsequence $\{y_{n_k}\}_{k\in\N}$ such that $y_{n_k}\in Y_{\mathbf{f}^{(i)}}$ for all $k\in\N$. By Lemma \ref{lem:elements.Y.tight}, $\iota(y_{n_k})\in V_{\mathbf{f}^{(i)}}$ for all $k\in\N$. Since $V_{\mathbf{f}^{(i)}}$ is compact and $\ftight$ is Hausdorff, we conclude that $\xi\in V_{\mathbf{f}^{(i)}}$.
\end{proof}

For each $Z\in\acfg{D}$, let $\widehat{Z}=\{\ft\in\widehat{\acfg{D}}\mid Z\in\ft\}$. By the Stone duality, if $\acfg{K}(\widehat{\acfg{D}})$ is the family of all compact-open subsets of $\widehat{\acfg{D}}$, then the map $Z\in \acfg{D}\to \widehat{Z}\in \acfg{K}(\widehat{\acfg{D}})$ is an isomorphism of Boolean algebras.

We now prove that $ \widehat{\acfg{D}}$ and $ \ftight$ are homeomorphic.

\begin{proposicao}\label{prop:iso.dual.D.with.tight}
The map $\Phi:\widehat{\acfg{D}}\to\ftight$ given by
\[\Phi(\ft)=\{e\in E(S)\mid Y_e \in\ft\}\]
is a homeomorphism such that $\Phi(\widehat{Y_{\mathbf{e}}})=V_{\mathbf{e}}$ for all $\mathrm{e}\in E(S)^+$.
\end{proposicao}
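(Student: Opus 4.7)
The idea is to invoke the Stone duality recalled in Section~\ref{subsection:filters.and.characters}: I will construct a Boolean algebra isomorphism $\Psi : \acfg{D} \to \acfg{K}(\ftight)$ sending $Y_{\mathbf{e}} \mapsto V_{\mathbf{e}}$, and then identify $\Phi$ with its Stone dual. Since every element of $\acfg{D}$ is a finite union $\bigcup_i Y_{\mathbf{e}^{(i)}}$ (as noted after Lemma~\ref{lem:intersection.Y}), I would set $\Psi\bigl(\bigcup_i Y_{\mathbf{e}^{(i)}}\bigr) = \bigcup_i V_{\mathbf{e}^{(i)}}$. Well-definedness is the key technical point: if $\bigcup_i Y_{\mathbf{e}^{(i)}} = \bigcup_j Y_{\mathbf{f}^{(j)}}$, then each $Y_{\mathbf{e}^{(i)}} \scj \bigcup_j Y_{\mathbf{f}^{(j)}}$, and Lemma~\ref{lem:union.Y.and.V} transfers these inclusions to the $V$-side; by symmetry the images agree. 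That $\Psi$ respects the Boolean operations is routine: intersections follow from Lemma~\ref{lem:intersection.Y} combined with the identity $V_e \cap V_f = V_{ef}$ (valid since filters are closed under meets), and relative complements follow by writing $Y_{\mathbf{e}} = Y_e \setminus (Y_{ee_1} \cup \cdots \cup Y_{ee_n})$ for $\mathbf{e} = (e,e_1,\ldots,e_n)$ together with the analogous identity on the $V$-side.

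Next I would verify that $\Psi$ is an isomorphism. Injectivity uses the map $\iota$: if $\Psi(Z) = \emptyset$ for $Z = \bigcup_i Y_{\mathbf{e}^{(i)}}$, then each $V_{\mathbf{e}^{(i)}}$ is empty, so by Lemma~\ref{lem:elements.Y.tight} each $\iota(Y_{\mathbf{e}^{(i)}}) = \iota(Y)\cap V_{\mathbf{e}^{(i)}}$ is empty, forcing $Y_{\mathbf{e}^{(i)}} = \emptyset$ by injectivity of $\iota$, and hence $Z = \emptyset$. Surjectivity is standard: any compact-open subset of $\ftight$ admits a finite subcover by basic compact-open sets $V_{\mathbf{e}}$, so it lies in the image of $\Psi$. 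With $\Psi$ a Boolean algebra isomorphism, Stone duality supplies a homeomorphism $\widehat{\acfg{D}} \cong \widehat{\acfg{K}(\ftight)} \cong \ftight$, the second identification being the one recalled in Section~\ref{subsection:filters.and.characters}. Chasing definitions, this composite homeomorphism sends $\ft \in \widehat{\acfg{D}}$ to the unique $\xi \in \ftight$ characterised by $\xi \in V_{\mathbf{e}} \iff Y_{\mathbf{e}} \in \ft$ for every $\mathbf{e}$; specialising to singletons $\mathbf{e} = (e)$ and using the identification of points of $\ftight$ with tight filters in $E(S)$, this $\xi$ equals $\{e \in E(S) \mid Y_e \in \ft\} = \Phi(\ft)$.

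The identity $\Phi(\widehat{Y_{\mathbf{e}}}) = V_{\mathbf{e}}$ is then immediate: $\ft \in \widehat{Y_{\mathbf{e}}}$ iff $Y_{\mathbf{e}} \in \ft$ iff $\Phi(\ft) \in V_{\mathbf{e}}$. The main obstacle in the plan is the well-definedness of $\Psi$, which is underwritten by Lemma~\ref{lem:union.Y.and.V}; this lemma in turn leans on the density of $\iota(Y)$ in $\ftight$ (Lemma~\ref{lem:Y.dense.tight}) together with the compactness of the basic open sets $V_{\mathbf{f}}$ in the Hausdorff space $\ftight$. Once well-definedness is in hand, the remainder of the argument is standard Stone-duality bookkeeping.
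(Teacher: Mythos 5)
Your argument is correct, but it takes a genuinely different route from the paper's. The paper works pointwise: it verifies directly that $\Phi(\ft)$ is a \emph{tight} filter (showing each $\xi_n$ in the associated complete family is an ultrafilter and checking condition (ii) of Theorem~\ref{thm:TightFiltersType}, both via primeness of $\ft$ applied to explicit disjoint decompositions such as $Y_{(\alpha,A\cup B,\alpha)}=Y_{(\alpha,A,\alpha)}\cup Y_{(\alpha,B\setminus A,\alpha)}$ and $Y_{(\alpha,A,\alpha)}=\bigcup_{e\in\varepsilon(A)}Y_{(\alpha e,r(e),\alpha e)}$), and then constructs the inverse explicitly as $\xi\mapsto\ft_\xi=\{Z\in\acfg{D}\mid\exists\,\mathbf{e}\text{ with }\xi\in V_{\mathbf{e}},\ Y_{\mathbf{e}}\scj Z\}$. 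You instead build a Boolean algebra isomorphism $\acfg{D}\to\acfg{K}(\ftight)$, $Y_{\mathbf{e}}\mapsto V_{\mathbf{e}}$, and dualize; the fact that $\Phi(\ft)$ lands in $\ftight$ then comes for free from Stone duality, which is the main thing your approach buys. Both proofs ultimately rest on the same three technical lemmas (\ref{lem:elements.Y.tight}, \ref{lem:Y.dense.tight} and especially \ref{lem:union.Y.and.V}, which gives well-definedness in your setup and ultrafilter-hood of $\ft_\xi$ in the paper's). Two small points you should make explicit: (a) you need each $V_{\mathbf{e}}$ to be compact (it is: $V_e$ is compact-open in the tight spectrum and $V_{\mathbf{e}}$ is closed in $V_e$) and that every compact-open subset of $\ftight$ is a finite union of sets $V_{\mathbf{e}}$, so that your map really is onto $\acfg{K}(\ftight)$; and (b) the passage from ``trivial kernel'' to injectivity of $\Psi$ uses that a $0$-preserving lattice homomorphism of generalized Boolean algebras automatically preserves relative complements (by uniqueness of relative complements in distributive lattices), so $\Psi(Z_1)=\Psi(Z_2)$ forces $\Psi(Z_1\setminus Z_2)=\Psi(Z_2\setminus Z_1)=\emptyset$. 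Neither point is a gap, just bookkeeping worth recording. The trade-off is that the paper's proof is more self-contained and produces the inverse map explicitly, while yours is shorter and more conceptual but leans on the Stone-duality identification $\widehat{\acfg{K}(X)}\cong X$ as a black box.
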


\begin{proof}
We first check that $\Phi$ is well-defined. Given $\ft\in\widehat{\acfg{D}}$, since $Y_0=\emptyset$, we have that $Y_0\notin \ft$ and therefore $0\notin \Phi(\ft)$. That $\Phi(\ft)$ is a filter in $E(S)$ then follows from the fact that $e\leq f$ in $E(S)$ whenever $ef=e$ in $S$ and Lemma \ref{lem:intersection.Y}. By Theorem \ref{thm.filters.in.E(S)}, we can describe $\Phi(\ft)$ as a pair $(\alpha,\{\xi_n\}_{n=0}^{|\alpha|})$, where $\{\xi_n\}_{n=0}^{|\alpha|}$ is a complete family of filters for $\alpha$. We now prove, using Proposition~\ref{prop:ultrafilter}, that $\xi_n$ is an ultrafilter for every $n$. In order to do this, we fix $n$, we take an arbitrary $A\in \acfrg{\alpha_{1,n}}\setminus\xi_n$ and we show that there exists $C\in \xi_n$ such that $A\cap C=\emptyset$. Take $B\in\xi_n$ and notice that $A\cup B\in\xi_n$ and $A\cup B$ can be written as the disjoint union $A\cup (B\setminus A)$. Then $Y_{(\alpha_{1,n},A\cup B,\alpha_{1,n})}=Y_{(\alpha_{1,n},A,\alpha_{1,n})}\cup Y_{(\alpha_{1,n},B\setminus A,\alpha_{1,n})}$, again the union being disjoint. Since $\ft$ is an ultrafilter, it is a prime filter, and since $A\notin \xi_n$, we have that $Y_{(\alpha_{1,n},B\setminus A,\alpha_{1,n})}\in\ft$ and therefore $C=B\setminus A\in\xi_n$ is such that $A\cap C=\emptyset$. If $|\alpha|=\infty$, then $\Phi(\ft)$ is a tight filter by Theorem \ref{thm:TightFiltersType}. Suppose then that $|\alpha|<\infty$ and let us check Condition~(ii) of Theorem~\ref{thm:TightFiltersType}. If $A\in\xi_{|\alpha|}$ is such that $A\cap G^0_s=\emptyset$ and $\varepsilon(A)<\infty$, then $Y_{(\alpha,A,\alpha)}=\cup_{e\in\varepsilon(A)}Y_{(\alpha e,r(e),\alpha e)}$, and since $\ft$ is prime, there exists $e_0\in\varepsilon(A)$ such that $Y_{(\alpha e_0,r(e_0),\alpha e_0)}\in\ft$. This implies that $(\alpha e_0,r(e_0),\alpha e_0)\in\Phi(\ft)$, but this contradicts the fact that $\alpha$ is the labelled path associated to $\Phi(\ft)$. We conclude that $\Phi(\ft)\in\ftight$ when $|\alpha|<\infty$ and $|\alpha|=\infty$, showing $\Phi$ is well-defined.

For the inverse of $\Phi$, given $\xi\in\ftight$, we define
\[\ft_{\xi}=\{Z\in\acfg{D}\mid \exists\, \mathbf{e}\in E(S)^+\text{ such that }\xi\in V_{\mathbf{e}}\text{ and }Y_{\mathbf{e}}\scj Z\}.\]
In order to prove that $\ft_{\xi}$ is an ultrafilter, it suffices to show that for $n\in \N$ with $n\geq 2$, if $\mathbf{f}^{(1)},\ldots,\mathbf{f}^{(n)}\in E(S)^+$ are such that $\bigcup_{i=1}^m Y_{\mathbf{f}^{(i)}}\in\ft_{\xi}$, then there exists $i\in\{1,\ldots,n\}$ such that $Y_{\mathbf{f}^{(i)}}\in\ft_{\xi}$. By the definition of $\ft_{\xi}$, if $\bigcup_{i=1}^m Y_{\mathbf{f}^{(i)}}\in\ft_{\xi}$, then there exists $\mathbf{e}\in E(S)^+$ such that $\xi\in V_{\mathbf{e}}$ and $Y_{\mathbf{e}}\scj \bigcup_{i=1}^m Y_{\mathbf{f}^{(i)}}$. By Lemma \ref{lem:union.Y.and.V}, there exists $i\in\{1,\ldots,n\}$ such that $\xi\in V_{\mathbf{f}^{(i)}}$, but this implies that $Y_{\mathbf{f}^{(i)}}\in\ft_{\xi}$. The map $\Psi:\ftight\to\widehat{\acfg{D}}$ given by $\Psi(\xi)=\ft_{\xi}$ is then well-defined. Let us prove that $\Psi=\Phi^{-1}$.

Given $\ft\in\widehat{\acfg{D}}$ and $Z\in \ft$, there exists $\mathbf{e}=(e,e_1,\ldots,e_n)\in E(S)^+$ such that $Y_{\mathbf{e}}\in \ft$ and $Y_{\mathbf{e}}\scj Z$. Notice that $Y_e\in\ft$ and $Y_{e_i}\notin \ft$ for all $i=1,\ldots,n$. By definition, $e\in\Phi(\ft)$ and $e_i\notin\Phi(\ft)$ for all $i=1,\ldots,n$, which implies that $\Phi(\ft)\in V_{\mathbf{e}}$. We conclude that $Z\in \Psi(\Phi(\ft))$, and since $Z$ was arbitrary, $\ft\scj\Psi(\Phi(\ft))$. The equality follows from the fact that $\ft$ is an ultrafilter.

Now fix $\xi\in\ftight$. If $e\in \xi$, then $\xi\in V_e$, which implies that $Y_e\in\Psi(\xi)$, and hence $e\in\Phi(\Psi(\xi))$. We conclude that $\xi\scj\Phi(\Psi(\xi))$. For the reverse inclusion, suppose that $e\in \Phi(\Psi(\xi))$. In this case $Y_e\in\Psi(\xi)$, and therefore there exists $\mathbf{f}\in E(S)^+$ such that $\xi\in V_{\mathbf{f}}$ and $Y_{\mathbf{f}}\scj Y_e$. By Lemma \ref{lem:union.Y.and.V}, $V_{\mathrm{f}}\scj V_e$, which implies that $\xi\in V_e$, that is, $e\in \xi$. The equality $\xi=\Phi(\Psi(\xi))$ now follows.

The equality $\Phi(\widehat{Y_{\mathbf{e}}})=V_{\mathbf{e}}$ for all $\mathrm{e}\in E(S)^+$ follows from the construction of $\Phi$. Finally, the family $\{\widehat{Y_{\mathbf{e}}}\}_{\mathbf{e}\in E(S)^+}$ is a basis for the topology on $\widehat{\acfg{D}}$, which is sent by $\Phi$ to the basis $\{V_{\mathbf{e}}\}_{\mathbf{e}\in E(S)^+}$ of $\ftight$. Hence $\Phi$ is a homeomorphism.
\end{proof}


Next we prove that the algebra $D$ defined in \cite{goncalves_royer_2019} is isomorphic to $\Lc(\widehat{\acfg{D}},R) $.  Before we present this result we make a few observations.

If we define $\acfg{C}=\{Y_{(\alpha,A,\alpha)}\mid (\alpha,A,\alpha)\in E(S), A\in\G^0\}$, then the algebra $D$ defined in \cite{goncalves_royer_2019} coincides with $F_{\acfg{C}}(Y)$. Notice that the algebra of sets generated by $\acfg{C}$ is $\acfg{D}$, and thus, by Lemma~\ref{lem:boolean.algebra.function}, we have that $D=F_{\acfg{D}}(Y)$. Hence, 
\begin{equation} \label{dfn:D.genrating.set}
    D=\mathrm{span}\{1_{Y_{(\alpha,A,\alpha)}}: (\alpha,A,\alpha)\in E(S) \}. 
\end{equation}

\begin{proposicao}\label{prop:iso.lc.with.D}
There exists an isomorphism of $R$-algebras $\phi:\Lc(\widehat{\acfg{D}},R)\to D$ such that $\phi(1_{\widehat{Z}})=1_Z$ for all $Z\in\acfg{D}$.
\end{proposicao}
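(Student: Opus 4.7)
The plan is to exhibit both $\Lc(\widehat{\acfg{D}},R)$ and $D$ as realizations of the universal algebra $L_{\acfg{D}}$ from Definition~\ref{def:univ.algebra.lattice.sets}, applied to the Boolean algebra $\acfg{D}$, and then compose the two isomorphisms produced by Proposition~\ref{prop:univ.algebra.lattice.sets}. The equality $D = F_{\acfg{D}}(Y)$ is noted in the discussion preceding the statement (see \eqref{dfn:D.genrating.set} and the identification $F_{\acfg{C}}(Y) = F_{\acfg{A}_{\acfg{C}}}(Y) = F_{\acfg{D}}(Y)$ granted by Lemma~\ref{lem:boolean.algebra.function}). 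So one side is immediate: Proposition~\ref{prop:univ.algebra.lattice.sets} gives an isomorphism $\psi_1:L_{\acfg{D}}\to D$ with $\psi_1(p_Z)=1_Z$. What remains is to produce an analogous isomorphism for $\Lc(\widehat{\acfg{D}},R)$.

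The first concrete step is to identify $\Lc(\widehat{\acfg{D}},R)$ with $F_{\acfg{K}}(\widehat{\acfg{D}})$, where $\acfg{K}$ is the lattice of compact-open subsets of $\widehat{\acfg{D}}$. The inclusion $F_{\acfg{K}}(\widehat{\acfg{D}})\subseteq \Lc(\widehat{\acfg{D}},R)$ is clear, since each $1_U$ with $U$ compact-open is locally constant with compact support. For the reverse inclusion, if $f\in\Lc(\widehat{\acfg{D}},R)$ has compact support $K$, then for each $x\in K$ pick a compact-open neighborhood on which $f$ is constant (using that the sets $\widehat{Z}$ form a basis of compact-open sets); by compactness of $K$, finitely many such neighborhoods cover $K$, and a standard disjointification inside the Boolean algebra $\acfg{K}$ lets us write $f$ as an $R$-linear combination of characteristic functions of pairwise disjoint compact-open sets. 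Thus $\Lc(\widehat{\acfg{D}},R) = F_{\acfg{K}}(\widehat{\acfg{D}})$.

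The second step is to observe that Stone duality (as recalled in Subsection~\ref{subsection:filters.and.characters}) yields a Boolean-algebra isomorphism $\acfg{D}\to\acfg{K}$, $Z\mapsto\widehat{Z}$, which is in particular a lattice isomorphism preserving $\emptyset$. Proposition~\ref{prop:univ.algebra.lattice.sets} applied to $\acfg{K}$ provides an isomorphism $F_{\acfg{K}}(\widehat{\acfg{D}})\cong L_{\acfg{K}}$ sending $1_{\widehat{Z}}\mapsto p_{\widehat{Z}}$, and the above lattice isomorphism induces an isomorphism of universal algebras $L_{\acfg{K}}\cong L_{\acfg{D}}$ sending $p_{\widehat{Z}}\mapsto p_Z$ (this is immediate from the universal property, since the defining relations transport across the lattice isomorphism). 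Composing with $\psi_1$, we obtain the desired isomorphism
\[\phi:\Lc(\widehat{\acfg{D}},R) = F_{\acfg{K}}(\widehat{\acfg{D}}) \xrightarrow{\cong} L_{\acfg{K}} \xrightarrow{\cong} L_{\acfg{D}} \xrightarrow{\psi_1} D,\]
and by tracking a generator through the three maps we see that $\phi(1_{\widehat{Z}})=1_Z$ for every $Z\in\acfg{D}$, as required.

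The main technical point is really the identification $\Lc(\widehat{\acfg{D}},R)=F_{\acfg{K}}(\widehat{\acfg{D}})$ in the second paragraph, and in particular the disjointification step — once that is in hand, the rest is just a formal appeal to Proposition~\ref{prop:univ.algebra.lattice.sets} and the universal property of $L_{\acfg{D}}$.
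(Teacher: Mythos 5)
Your proof is correct, but it follows a genuinely different route from the paper's. The paper constructs $\phi$ directly: for a nonzero $f\in\Lc(\widehat{\acfg{D}},R)$ the image is a finite set of nonzero scalars $r_1,\ldots,r_n$, each level set $f^{-1}(r_i)$ is compact-open and hence equals $\widehat{Z_i}$ for a unique $Z_i\in\acfg{D}$, and one sets $\phi(f)=\sum_i r_i 1_{Z_i}$; the homomorphism property, surjectivity and injectivity are then verified by hand using the Boolean-algebra isomorphism $Z\mapsto\widehat{Z}$. You instead factor both algebras through the universal algebra of Definition~\ref{def:univ.algebra.lattice.sets}, applying Proposition~\ref{prop:univ.algebra.lattice.sets} twice (to $\acfg{D}$ acting on $Y$, and to the lattice $\acfg{K}$ of compact-open subsets of $\widehat{\acfg{D}}$) and transporting along the Stone isomorphism. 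The essential content is the same in both arguments --- that $\Lc(\widehat{\acfg{D}},R)$ is spanned by the functions $1_{\widehat{Z}}$, and that $Z\mapsto\widehat{Z}$ preserves the defining relations --- but your spanning step (cover the support by basic compact-open sets on which $f$ is constant, then disjointify) is slightly more laborious than the paper's observation that the level sets are already compact-open, while in exchange your injectivity comes for free from Proposition~\ref{prop:univ.algebra.lattice.sets} rather than from a separate check. Both are complete proofs, and yours has the mild advantage of reusing machinery the paper already deploys elsewhere (e.g.\ in the proof of Proposition~\ref{prop:isom.leavitt.path.algs}).
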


\begin{proof}
Given a non-zero element $f\in\Lc(\widehat{\acfg{D}},R)$, let $\{r_1,\ldots,r_n\}$ be the non-zero elements belonging to the image of $f$ and such that $r_i\neq r_j$ whenever $i\neq j$. For each $i=1,\ldots,n$, let $U_i=f^{-1}(r_i)$. Then $U_i$ is a compact-open subset of $\widehat{\acfg{D}}$ and therefore there is a unique $Z_i\in\acfg{D}$ such that $U_i=\widehat{Z_i}$. Define $\phi(f)=\sum_{i=1}^n r_i1_{Z_i}$ and $\phi(0)=0$. Using the isomorphism of Boolean algebra $Z\in \acfg{D}\to \widehat{Z}\in \acfg{K}(\widehat{\acfg{D}})$, it is easy to check that $\phi$ is a homomorphism of $R$-algebras. We have that $\phi$ is surjective because its image contains the generators of $D$. Moreover, observe that for $Z\in\acfg{D}$, we have that $1_Z=0\Leftrightarrow Z=\emptyset\Leftrightarrow \widehat{Z}=\emptyset\Leftrightarrow 1_{\widehat{Z}}=0$, from where we conclude that $\phi$ is injective.
\end{proof}

We have identified $\Lc(\widehat{\acfg{D}},R)$ with $\acfg{D}$ above, but our goal is to identify $\Lc(\ftight,R)$ with $ D$. We will do this using the identification between $\widehat{\acfg{D}}$ and $\ftight$ given in Proposition~\ref{prop:iso.dual.D.with.tight}. To this end, we need the following lemma, which describes a typical open set $V_t$, where $t\in \mathbb{F}$, that appears in the definition of the partial action $\Delta$ (see (\ref{eqn:labelled.top.partial.action})).
\begin{lema} \label{lem:open.set.alpha.beta}
For $\alpha,\beta \in\awstar $ we have that $V_{\alpha\beta^{-1}}= V_{(\alpha,r(\alpha)\cap r(\beta),\alpha)}$.
\end{lema}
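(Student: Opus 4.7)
The plan is to apply the formula $V_{(\alpha\beta^{-1})^{-1}}=\varphi_{\beta^{-1}}^{-1}(V_{\alpha^{-1}})$ recalled immediately after (\ref{eqn:labelled.top.partial.action}). Since $(\alpha\beta^{-1})^{-1}=\beta\alpha^{-1}$, swapping the roles of $\alpha$ and $\beta$ in that formula yields
\[
V_{\alpha\beta^{-1}} \;=\; \varphi_{\alpha^{-1}}^{-1}(V_{\beta^{-1}}).
\]
As $\varphi_{\alpha^{-1}}$ is a homeomorphism from $V_{\alpha}$ onto $V_{\alpha^{-1}}$, the right-hand side is automatically contained in $V_{\alpha}=V_{(\alpha,r(\alpha),\alpha)}$, and the task reduces to characterizing, for $\xi\in V_{\alpha}$, when $\varphi_{\alpha^{-1}}(\xi)\in V_{\beta^{-1}}=V_{(\eword,r(\beta),\eword)}$.

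A clean way to carry this out is to identify $\varphi_{\alpha}$ (the inverse of $\varphi_{\alpha^{-1}}$) with the partial homeomorphism $\theta_{s}$ associated with the inverse semigroup element $s=(\alpha,r(\alpha),\eword)\in S$, whose domain is $V_{s^{*}s}=V_{(\eword,r(\alpha),\eword)}=V_{\alpha^{-1}}$ and whose range is $V_{ss^{*}}=V_{\alpha}$. Under $\theta_{s}$, the preimage of $V_{e}$ for an idempotent $e\in E(S)$ is $V_{ses^{*}}$. Taking $e=(\eword,r(\beta),\eword)$ and computing in $S$ via the multiplication rule from Section~\ref{subsection:inverse.semigroup} gives
\[
ses^{*} = (\alpha,r(\alpha),\eword)\cdot(\eword,r(\beta),\eword)\cdot(\eword,r(\alpha),\alpha) = (\alpha,r(\alpha)\cap r(\beta),\alpha),
\]
which is exactly the desired idempotent.

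Alternatively, one can argue directly from the informal description of $\varphi_{\alpha^{-1}}$ as the map that cuts the prefix $\alpha$: the condition $\varphi_{\alpha^{-1}}(\xi)\in V_{(\eword,r(\beta),\eword)}$ translates to the existence of some $A\in\xi_{|\alpha|}$ with $A\subseteq r(\beta)$; since every such $A$ is already contained in $r(\alpha)$, this is equivalent to $r(\alpha)\cap r(\beta)\in\xi_{|\alpha|}$ (using upward closure of $\xi_{|\alpha|}$ in $\acfrg{\alpha}$), and hence, by Remark~\ref{remark.when.in.xialpha}, to $(\alpha,r(\alpha)\cap r(\beta),\alpha)\in\xi$. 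The only case requiring separate attention is $r(\alpha)\cap r(\beta)=\emptyset$, in which both sides are empty: the left-hand side by the non-emptiness criterion quoted after (\ref{eqn:labelled.top.partial.action}), and the right-hand side because $(\alpha,\emptyset,\alpha)=0$ in $E(S)$ and $V_0=\emptyset$. The only mildly delicate step is the description of $\varphi_{\alpha^{-1}}$, which the paper gives informally; the rigorous justification is a direct unpacking of \cite[Proposition~3.12]{MR4109095}, or equivalently the identification with $\theta_s$ above.
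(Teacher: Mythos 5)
Your proposal is correct. The ``alternative'' direct argument at the end is essentially the paper's own proof in disguise: the paper likewise reduces everything to whether $(\eword,r(\beta),\eword)$, equivalently $r(\alpha)\cap r(\beta)$, lies in the filter obtained by cutting $\alpha$ from $\xi$, though it phrases this as a two-inclusion chase that explicitly constructs the witness $\eta=\varphi_\beta(\varphi_\alpha^{-1}(\xi))$ for membership in $V_{\alpha\beta^{-1}}$ (and, incidentally, invokes that $\varphi_\alpha^{-1}(\xi)_0$ is an ultrafilter where mere upward closedness in $\acfrg{\alpha}$ suffices, as you correctly use). Your primary route is genuinely different in packaging: starting from the quoted identity $V_{\alpha\beta^{-1}}=\varphi_{\alpha^{-1}}^{-1}(V_{\beta^{-1}})$ and computing the preimage as $V_{ses^{*}}$ with $s=(\alpha,r(\alpha),\eword)$ collapses the lemma to a one-line multiplication in $S$, which is cleaner and less error-prone than tracking complete families. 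Two caveats. First, the identification of $\varphi_\alpha$ with the canonical action $\theta_s$ of $s=(\alpha,r(\alpha),\eword)$ on the tight spectrum is nowhere stated in this paper and must be imported from \cite{MR4109095} or Exel's general theory; you flag this, and it is indeed the only substantive external input. Second, your phrase ``under $\theta_s$, the preimage of $V_e$ is $V_{ses^{*}}$'' is stated backwards: $V_{ses^{*}}$ is the image $\theta_s(V_e\cap V_{s^{*}s})$, i.e.\ the preimage under $\theta_{s^{*}}=\varphi_{\alpha^{-1}}$ --- which is exactly the map you need, so the computation you actually perform is the right one; just fix the wording. Your explicit treatment of the degenerate case $r(\alpha)\cap r(\beta)=\emptyset$ is a point the paper's proof passes over silently, and is a welcome addition.
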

\begin{proof} If $\xi\in V_{\alpha\beta^{-1}}$ then $r(\alpha)\cap r(\beta)\neq \emptyset$ (see \cite[Lemma~3.10]{MR4109095}), and by the partial action associated with a labelled space (\ref{eqn:labelled.top.partial.action}) \[\xi=\varphi_{\alpha\beta^{-1}}(\eta)\]
for some $\eta\in V_{(\beta\alpha^{-1})^{-1}}$. That is, $\eta=\eta^{\beta\gamma}$, for some $\gamma\in \awinf\cup \awstar$. Then, $(\eword,r(\beta),\eword)\in \varphi_{\beta^{-1}}(\eta) $, which implies that $(\alpha,r(\alpha)\cap r(\beta), \alpha)\in \varphi_{\alpha\beta^{-1}}(\eta)=\xi$, proving that $V_{\alpha\beta^{-1}}\subseteq V_{(\alpha,r(\alpha)\cap r(\beta),\alpha)}$.

For the reverse inclusion, if $\xi\in V_{(\alpha,r(\alpha)\cap r(\beta),\alpha)}$, then $(\alpha,r(\alpha)\cap r(\beta),\alpha)\in \xi$ and thus $\xi=\xi^{\alpha\gamma}$ for some $\gamma\in \awinf\cup \awstar$. Then, 
$(\eword,r(\alpha)\cap r(\beta),\eword)\in \varphi_\alpha^{-1}(\xi)$, which implies that $(r(\alpha)\cap r(\beta))\in \varphi_\alpha^{-1}(\xi)_{0}$. Since $\varphi_\alpha^{-1}(\xi)_{0}$ is an ultra filter, it follows that $r(\beta)\in\varphi_\alpha^{-1}(\xi)_{0}$. Thus, $\varphi_\alpha^{-1}(\xi)$ is in the set of tight filters that can be ``glued" to $\beta$. Now, let $\eta=\varphi_\beta(\varphi_\alpha^{-1}(\xi))$. Then, 
\[\xi=\varphi_\alpha(\varphi_\beta^{-1}(\eta)).\]
That is, $\xi\in V_{\alpha\beta^{-1}}$, and thus $V_{(\alpha,r(\alpha)\cap r(\beta),\alpha)}\subseteq V_{\alpha\beta^{-1}}$.
\end{proof}

For $t\in \mathbb{F}$, 
\[D_t=\mathrm{span}\{1_t 1_{Y_{(\alpha,A,\alpha)}}:(\alpha,A,\alpha)\in E(S)\}\]
defines an ideal in $D$. Note that if $t=\gamma\delta^{-1}$ for some $\gamma,\delta\in\awstar$, then $D_{\gamma\delta^{-1}}=D_{X_{\gamma\delta^{-1}}}$ in the notation of \cite{goncalves_royer_2019}, which corresponds to our $D_{Y_{(\gamma,r(\gamma)\cap r(\delta),\gamma)}}$ (see Remark~\ref{rem:SameOpenSets}). For the remainder of this section we retain the notation from earlier by still letting $\phi$ denote the isomorphism from Proposition~\ref{prop:iso.lc.with.D} and $\Phi$ the homeomorphism from Proposition~\ref{prop:iso.dual.D.with.tight}.
\begin{proposicao} \label{prop:isom.ideals}
     For any compact open set $U\subset \ftight$, we define 
     \[\psi(1_U)=\phi(1_{\Phi^{-1}(U)}).\]
     Then $\psi$ extends to an isomorphism of $\Lc(\ftight,R)$ onto $D$.
     Furthermore, if $t\in\mathbb{F}$ is in reduced form, then $\psi(\Lc(V_t,R))=D_t$. 
\end{proposicao}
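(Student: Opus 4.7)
My plan is to realize $\psi$ as the composition of two isomorphisms already at hand. First, the homeomorphism $\Phi\colon \widehat{\acfg{D}} \to \ftight$ of Proposition~\ref{prop:iso.dual.D.with.tight} induces an isomorphism of $R$-algebras $\Phi^*\colon \Lc(\ftight, R) \to \Lc(\widehat{\acfg{D}}, R)$ via pullback $\Phi^*(f) = f \circ \Phi$; this preserves local constancy and compactness of support because $\Phi$ is a homeomorphism of locally compact Hausdorff spaces with bases of compact-open sets. Composing with $\phi$ from Proposition~\ref{prop:iso.lc.with.D} produces the candidate $\psi := \phi \circ \Phi^*$. On the characteristic function of a compact-open $U$ one has $\Phi^*(1_U) = 1_{\Phi^{-1}(U)}$, so $\psi(1_U) = \phi(1_{\Phi^{-1}(U)})$, recovering the stated formula; since $\ftight$ has a basis of compact-open sets, $\Lc(\ftight, R)$ is the $R$-span of such $1_U$'s, and so $\psi$ is the unique $R$-linear extension and is an isomorphism.

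For the second statement, I will use Lemma~\ref{lem:open.set.alpha.beta} to rewrite $V_t = V_{e_t}$ with $e_t := (\alpha, r(\alpha) \cap r(\beta), \alpha) \in E(S)$ whenever $t = \alpha\beta^{-1}$ in reduced form with $\alpha,\beta \in \awstar$; in particular $V_t$ is compact-open. The space $\Lc(V_t, R)$ is then the $R$-span of $\{1_U\}$ for compact-open $U \scj V_t$, and the Stone duality packaged in Proposition~\ref{prop:iso.dual.D.with.tight} puts such $U$'s in bijection with $\{Z \in \acfg{D} : Z \scj Y_{e_t}\}$ via $\Phi^{-1}(U) = \widehat{Z}$. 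Hence
\[
\psi(\Lc(V_t, R)) = \mathrm{span}_R\{1_Z : Z \in \acfg{D},\ Z \scj Y_{e_t}\}.
\]
On the other side, $D = \mathrm{span}_R\{1_{Y_{(\gamma,A,\gamma)}}\}$ by (\ref{dfn:D.genrating.set}), and $1_t = 1_{Y_{e_t}}$ by Remark~\ref{rem:SameOpenSets}. Thus
\[
D_t = 1_{Y_{e_t}} \cdot D = \mathrm{span}_R\{1_{Y_{e_t} \cap Y_{(\gamma,A,\gamma)}} : (\gamma,A,\gamma) \in E(S)\},
\]
which by Lemma~\ref{lem:intersection.Y} and the fact that $\acfg{D}$ is closed under intersections (and generated by the $Y_e$'s) equals $\mathrm{span}_R\{1_W : W \in \acfg{D},\ W \scj Y_{e_t}\}$. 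Comparing, the two spans coincide, giving $\psi(\Lc(V_t, R)) = D_t$.

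The technical heart of the argument is keeping three parallel descriptions of the same object in bijective correspondence: a compact-open subset of $\ftight$, a compact-open subset $\widehat{Z}$ of $\widehat{\acfg{D}}$, and an element $Z$ of the Boolean algebra $\acfg{D}$ contained in $Y_{e_t}$. The key step that makes this possible is Lemma~\ref{lem:open.set.alpha.beta}, which translates the group element $t \in \F$ (given only in reduced form) into the explicit idempotent $e_t \in E(S)$; once this translation is in hand, the remaining bookkeeping is routine and the two descriptions of $D_t$ and $\psi(\Lc(V_t, R))$ agree termwise on their spanning sets.
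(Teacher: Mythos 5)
Your argument is correct and is essentially the paper's own proof: the first part is obtained by composing the pullback along the homeomorphism of Proposition~\ref{prop:iso.dual.D.with.tight} with the isomorphism of Proposition~\ref{prop:iso.lc.with.D}, and the second part rests on Lemma~\ref{lem:open.set.alpha.beta} together with the Stone-duality identification of compact-open subsets of $\ftight$ with elements of $\acfg{D}$ (the paper proves the two inclusions separately, whereas you identify both sides with $\mathrm{span}_R\{1_Z : Z\in\acfg{D},\ Z\scj Y_{e_t}\}$, which amounts to the same bookkeeping). The one point you leave implicit is the case of a reduced $t$ that is not of the form $\alpha\beta^{-1}$ with $\alpha,\beta\in\awstar$ (or has $r(\alpha)\cap r(\beta)=\emptyset$): there $V_t=\emptyset$ by \cite[Lemma 3.11]{MR4109095} and $D_t=0$ by \cite[Notation 3.4]{goncalves_royer_2019}, so the claimed equality holds trivially; the paper disposes of this reduction in one sentence, and your proof should record it as well.
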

  \begin{proof}
     The claim that $\psi$ extends to an isomorphism of $\Lc(\ftight,R)$ onto $D$ follows from  Proposition~\ref{prop:iso.dual.D.with.tight} and Proposition~\ref{prop:iso.lc.with.D}. 
     
    Let  $t\in\mathbb{F}$ be in reduced form. If $t=\eword$, then $\Lc(V_\eword,R)=\Lc(\ftight,R)$, and we are done. If $t\neq\eword$ and $V_t\neq\emptyset$, then $t\in \{\alpha\mid\alpha\in\awplus\}\cup\{\alpha^{-1}\mid\alpha\in\awplus\}\cup\{\alpha\beta^{-1}\mid \beta,\alpha\in\awplus, r(\alpha)\cap r(\beta)\neq\emptyset\}$, by \cite[Lemma 3.11]{MR4109095}. Similarly, by \cite[Notation 3.4]{goncalves_royer_2019}, if $D_t\neq\emptyset$, then $t=\alpha\beta^{-1}$ with $r(\alpha)\cap r(\beta)\neq \emptyset$. Hence we may assume without loss of generality that $t=\alpha\beta^{-1}$ with $\alpha,\beta\in\awstar$  and  $r(\alpha)\cap r(\beta)\neq \emptyset$. 
    
    We first show that $\psi(\Lc(V_t,R))\subseteq D_t$. By Lemma~\ref{lem:open.set.alpha.beta} we have that $V_t=V_{\alpha\beta^{-1}}=V_{(\alpha, r(\alpha)\cap r(\beta),\alpha)}$, and then it follows from Lemma~\ref{prop:iso.dual.D.with.tight} that $\psi(1_{V_{(\alpha, r(\alpha)\cap r(\beta),\alpha)}})=1_{Y_{(\alpha, r(\alpha)\cap r(\beta),\alpha)}}$. Let $U\subseteq V_{t}$ be a compact open set. Then $\Phi^{-1}(U)$ is a compact open set in $\widehat{\acfg{D}}$. That is, $\Phi^{-1}(U)\in \acfg{K}(\widehat{\acfg{D}})$. Hence there is exists a $Z\in \acfg{D}$ such that $\Phi^{-1}(U)=\widehat{Z}\subset \widehat{Y}_{(\alpha, r(\alpha)\cap r(\beta),\alpha)}$.  Then, 
    \begin{eqnarray*}
     \psi(1_{U} )&=& \psi(1_{U}1_{V_{(\alpha, r(\alpha)\cap r(\beta),\alpha)}}) \\
     &=& 1_Z 1_{Y_{(\alpha, r(\alpha)\cap r(\beta),\alpha)}},
    \end{eqnarray*}
    which belongs to $D_t$. If $f\in \Lc(V_t,R)$ is arbitrary, then $f=\sum_{i=1}^{n}r_i 1_{U_i}$, where $r_i\in R$ and $U_i\subseteq V_t$ are compact open for each $i=1,2,\ldots,n$. Then, $\psi(f)=\sum_{i=1}^n r_i \psi(1_{U_{i}}) \in D_t$. Hence $\psi(\Lc(V_t,R))\subseteq D_t$.
    
    To show that $\psi(\Lc(V_t,R))\subseteq D_t$, it will suffice to show that each generator of $D_t$ is the image of some function from $\Lc(V_t,R)$. Consider $1_{Y_{(\alpha, r(\alpha)\cap r(\beta),\alpha)}}1_{Y_{(\gamma,C,\gamma)}}\in D_t$, with $(\gamma,C,\gamma)\in E(S)$. Then 
    \[\psi({1_{V_{(\alpha, r(\alpha)\cap r(\beta),\alpha)}}1_{V_{(\gamma,C,\gamma)}}})=1_{Y_{(\alpha, r(\alpha)\cap r(\beta),\alpha)}}1_{Y_{(\gamma,C,\gamma)}}, \]
    and $1_{V_{(\alpha, r(\alpha)\cap r(\beta),\alpha)}}1_{V_{(\gamma,C,\gamma)}}\in  \Lc(V_{(\alpha,r(\alpha)\cap r(\beta),\alpha)},R)=\Lc(V_{\alpha\beta^{-1}},R)$. Hence, $\psi$ maps $\Lc(V_t,R)$ onto $D_t$.
  \end{proof}

Let $\Delta=(\{V_t\}_{t\in\mathbb{F}}, \{\varphi_t\}_{t\in\mathbb{F}})$ be the partial action on $\ftight$ defined in (\ref{eqn:labelled.top.partial.action}). Then $\Delta$ induces a dual algebraic partial action
$\hat{\Delta}= (\{\Lc(V_t,R)\}_{t\in\mathbb{F}}, \{\hat{\varphi_t}\}_{t\in\mathbb{F}})$, and each $\hat{\varphi_t}:\Lc(V_{t^{-1}},R)\to \Lc(V_t,R)$ is the isomorphism defined by $\hat{\varphi_t}(f)(\xi)=f\circ\varphi_{t^{-1}}(\xi)$.


We now state the main result of this section.

\begin{theorem} \label{prop:equivalent.partial.actions}
Let $\G$ be an ultragraph, $\Theta=(\{D_t\}_{t\in\mathbb{F}}, \{\theta_t\}_{t\in\mathbb{F}})$ be the algebraic partial action of $\mathbb{F}$ on $D$ as defined in \cite[Remark 3.8]{goncalves_royer_2019} and $\hat{\Delta}=(\{\Lc(V_t,R)\}_{t\in\mathbb{F}}, \{\hat{\varphi_t}\}_{t\in\mathbb{F}})$ be the dual of the topological partial action $\Delta=(\{V_t\}_{t\in\mathbb{F}}, \{\varphi_t\}_{t\in\mathbb{F}})$ defined in (\ref{eqn:labelled.top.partial.action}). Then $\Theta$ and $\hat{\Delta}$ are equivalent. 
\end{theorem}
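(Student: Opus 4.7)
The plan is to use the isomorphism $\psi : \Lc(\ftight, R) \to D$ from Proposition~\ref{prop:isom.ideals} as the intertwining map for the equivalence. Since that proposition already establishes $\psi(\Lc(V_t, R)) = D_t$ for every $t \in \mathbb{F}$ in reduced form, all that remains is to verify the intertwining identity $\psi \circ \hat{\varphi}_t = \theta_t \circ \psi$ on $\Lc(V_{t^{-1}}, R)$ for every $t \in \mathbb{F}$.

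By linearity, it suffices to check this identity on characteristic functions $1_U$ for $U$ a basic compact open subset of $V_{t^{-1}}$, and by \cite[Lemma~3.11]{MR4109095} together with \cite[Notation~3.4]{goncalves_royer_2019} we may restrict attention to $t = \alpha\beta^{-1}$ with $\alpha,\beta \in \awstar$ and $r(\alpha)\cap r(\beta) \neq \emptyset$ (the case $t = \eword$ being immediate from $\hat{\varphi}_{\eword} = \id$ and $\theta_{\eword} = \id$). In this situation, Lemma~\ref{lem:open.set.alpha.beta} gives $V_{t^{-1}} = V_{(\beta, r(\alpha)\cap r(\beta), \beta)}$, so a typical basic compact open subset of $V_{t^{-1}}$ has the form $V_{(\beta\gamma, C, \beta\gamma)}$ with $\gamma \in \awstar$ and $C \in \mathcal{G}^0$ satisfying $C \subseteq r(\alpha\gamma)\cap r(\beta\gamma)$.

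The key step is an explicit description of $\varphi_{\alpha\beta^{-1}}$ on such basic sets. Using the bijection between filters of $E(S)$ and complete families (Theorem~\ref{thm.filters.in.E(S)}), one sees that $\varphi_{\alpha\beta^{-1}}$ sends a filter whose associated labelled path begins with $\beta$ (of the form $\beta\gamma\cdots$) to the filter whose associated labelled path begins with $\alpha$ (of the form $\alpha\gamma\cdots$), leaving the coordinates of the complete family beyond the changed prefix untouched. Transporting this through the embedding $\iota : Y \hookrightarrow \ftight$ from Lemma~\ref{lem:elements.Y.tight} (whose image is dense by Lemma~\ref{lem:Y.dense.tight}), this cut-and-glue homeomorphism restricts to precisely the topological partial action on $Y$ that underlies $\theta_{\alpha\beta^{-1}}$ as constructed in \cite{goncalves_royer_2019}. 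A direct computation therefore yields
\[
\hat{\varphi}_{\alpha\beta^{-1}}\bigl(1_{V_{(\beta\gamma,C,\beta\gamma)}}\bigr) \;=\; 1_{V_{(\alpha\gamma,C,\alpha\gamma)}},
\]
and applying $\psi$ and comparing with the formula for $\theta_{\alpha\beta^{-1}}$ on the generator $1_{Y_{(\beta\gamma,C,\beta\gamma)}}$ from \cite[Remark~3.8]{goncalves_royer_2019} gives the desired intertwining identity on generators, and hence on all of $\Lc(V_{t^{-1}}, R)$.

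The main obstacle is the book-keeping: carefully tracking how $\varphi_{\alpha\beta^{-1}}$ acts on the complete family of filters associated to a tight filter in $V_{\beta\alpha^{-1}}$, and in particular handling the boundary cases where $\alpha = \eword$ or $\beta = \eword$. In those cases the zero-th filter in the complete family (which lives in $\acf$ rather than in some $\acfra$) is modified in a slightly different manner than the filters indexed by $n \geq 1$, and care is needed to match this with the way $\theta_t$ acts on the generators $1_{Y_{(\eword, A, \eword)}}$ in \cite{goncalves_royer_2019}. Once these cases are patiently verified on the generating idempotents, the equivalence of $\Theta$ and $\hat{\Delta}$ follows.
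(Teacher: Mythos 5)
Your proposal is correct and takes essentially the same route as the paper: the paper's proof consists of the single observation that the isomorphism $\psi$ of Proposition~\ref{prop:isom.ideals} satisfies $\psi\circ\hat{\varphi}_t=\theta_t\circ\psi$ for all $t\in\mathbb{F}$, which is exactly the intertwining map you use. You in fact supply more detail than the paper does, by reducing to $t=\alpha\beta^{-1}$ and verifying the identity on the generating idempotents $1_{V_{(\beta\gamma,C,\beta\gamma)}}$ via the cut-and-glue description of $\varphi_{\alpha\beta^{-1}}$ — a verification the paper leaves entirely implicit.
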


\begin{proof}
If $\psi$ is the map of Proposition~\ref{prop:isom.ideals}, then $\psi\circ\hat{\varphi_t}=\theta_t\circ\psi$ for all $t\in\mathbb{F}$, from where the result follows.
\end{proof}

\begin{remark}
In the context of Leavitt path algebras the purely algebraic partial action defined in \cite{grskew} is shown to be equivalent to a topological partial action in \cite[Page 3964]{Hazgraded}. 
The above result generalizes this for ultragraph Leavitt path algebras.
\end{remark}

\section{Ultragraph algebras}\label{repolho}

In this section we focus on the realization of the algebras associated to ultragraphs as groupoid algebras. In the algebraic setting we first do the fundamental task of reconciling the two running definitions of an ultragraph Leavitt path algebra, and then we realize such algebras as Steinberg algebras. In the C*-algebraic context, we provide a description of a general ultragraph C*-algebra as a groupoid algebra, thereby generalizing \cite{MR2457327} (where ultragraphs are assumed to have no sinks) and  \cite{tasca2020kms} (where ultragraphs are assumed to satisfy Condition~(RFUM2)). Our results also provide a description of a general  ultragraph C*-algebra as a partial crossed product.

\subsection{Ultragraph Leavitt path algebra} \label{sec:Leavitt.path.algs.isom}

In this section we realize an ultragraph  Leavitt path algebra as a Steinberg algebra. However, as described in the introduction, it is of fundamental importance to first reconcile the two running definitions of ultragraph  Leavitt path algebras. This is done in Proposition~\ref{prop:isom.leavitt.path.algs}. After that we turn our attention to showing that the Leavitt path algebra of an ultragraph is isomorphic to the Steinberg algebra associated to the groupoid described in Subsection~\ref{vento}.

The definition of an ultragraph Leavitt path algebra that we will adopt is the following.

\begin{definicao}\label{def of ultragraph algebra}
Let $\mathcal{G}$ be an ultragraph and $R$ be a unital commutative ring. The Leavitt path algebra of $\mathcal{G}$, denoted by $L_R(\mathcal{G})$, is the universal $R$-algebra with generators $\{s_e,s_e^*:e\in \mathcal{G}^1\}\cup\{p_A:A\in \mathcal{G}^0\}$ and relations
\begin{enumerate}
\item $p_\emptyset=0,  p_Ap_B=p_{A\cap B},  p_{A\cup B}=p_A+p_B-p_{A\cap B}$, for all $A,B\in \mathcal{G}^0$;
\item $p_{s(e)}s_e=s_ep_{r(e)}=s_e$ and $p_{r(e)}s_e^*=s_e^*p_{s(e)}=s_e^*$ for each $e\in \mathcal{G}^1$;
\item $s_e^*s_f=\delta_{e,f}p_{r(e)}$ for all $e,f\in \mathcal{G}$;
\item $p_v=\sum\limits_{s(e)=v}s_es_e^*$ whenever $0<\vert s^{-1}(v)\vert< \infty$.
\end{enumerate}
\end{definicao}

As we mentioned before, the difference in the definitions of an ultragraph Leavitt path algebra lies in how the set of generalized vertices are defined.  Given an ultragraph $\G$,  let $\acf$ and $\G^0$ be as in Definition~\ref{prop_vert_gen}, and recall that $\G^0$ is not necessarily closed under relative complements. We denote by $L_R(\G_r)$ the Leavitt path algebra associated with $\G$ by allowing $A,B\in\acf$ in 1. of Definition~\ref{def of ultragraph algebra}, that is, $L_R(\G_r)$ is the algebra as defined in \cite[Definition~2.1]{imanfar2017leavitt}.

\begin{proposicao} \label{prop:isom.leavitt.path.algs} 
Let $\G$ be an ultragraph. Then $L_R(\G_r)$ and $L_R(\G)$ are isomorphic.
\end{proposicao}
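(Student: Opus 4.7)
The plan is to exhibit mutually inverse $R$-algebra homomorphisms $\phi : L_R(\G) \to L_R(\G_r)$ and $\psi : L_R(\G_r) \to L_R(\G)$ that are the identity on the generators $s_e$ and $s_e^*$. One direction is essentially free: since every $A \in \G^0$ belongs to $\acf$, the universal property of $L_R(\G)$ applied to the elements $\{s_e, s_e^*, p_A\}$ of $L_R(\G_r)$ with $A \in \G^0$ yields $\phi$. All the defining relations of $L_R(\G)$ survive because they only involve $p_A$'s with $A \in \G^0$ (note that $\{v\}, s(e), r(e) \in \G^0$).

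The substance is in constructing $\psi$. For this I would first use the universal-algebra results of Subsection~2.2 to manufacture, inside $L_R(\G)$, a consistent family of idempotents $\{q_A\}_{A \in \acf}$ that agrees with $\{p_A\}_{A \in \G^0}$ and satisfies the Boolean relations. Concretely: the universal property of the algebra $L_{\G^0}$ (Definition~2.6) gives a homomorphism $\pi : L_{\G^0} \to L_R(\G)$ sending the abstract generator $p_A$ to the actual $p_A \in L_R(\G)$. By Proposition~2.8 applied both to $\G^0$ and to $\acf$, we have $R$-algebra isomorphisms
\dmal{
L_{\G^0} \;\cong\; F_{\G^0}(G^0),\qquad L_{\acf}\;\cong\; F_{\acf}(G^0).
}
But $\acf$ is, by Definition~2.12, precisely the algebra of sets generated by $\G^0$, i.e., $\acf = \acfg{A}_{\G^0}$. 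Therefore Lemma~2.7 gives $F_{\G^0}(G^0) = F_{\acfg{A}_{\G^0}}(G^0) = F_{\acf}(G^0)$, so the two universal algebras $L_{\G^0}$ and $L_{\acf}$ are canonically isomorphic in a way that fixes $p_A$ for $A \in \G^0$. Pulling the image of $p_A \in L_{\acf}$ through this chain into $L_R(\G)$ defines the desired elements $q_A$ for every $A \in \acf$, and these automatically satisfy $q_\emptyset = 0$, $q_{A\cap B} = q_A q_B$, $q_{A \cup B} = q_A + q_B - q_{A \cap B}$, and $q_A = p_A$ when $A \in \G^0$.

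Having the $q_A$, I define $\psi$ on generators by $\psi(s_e) = s_e$, $\psi(s_e^*) = s_e^*$, $\psi(p_A) = q_A$ for $A \in \acf$, and then verify the remaining relations of $L_R(\G_r)$. Relations (2), (3) and (4) of Definition~6.1 concern only $p_{s(e)}$, $p_{r(e)}$ and $p_v$, all of which correspond to elements of $\G^0$; hence $q_{s(e)} = p_{s(e)}$, $q_{r(e)} = p_{r(e)}$, $q_v = p_v$, and these relations translate unchanged. Relation (1) for $\acf$ is exactly the content of the previous paragraph. By the universal property of $L_R(\G_r)$, $\psi$ is well-defined.

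Finally I check that $\phi$ and $\psi$ are mutually inverse. The composition $\psi \circ \phi$ is the identity on the generators of $L_R(\G)$ because $q_A = p_A$ for $A \in \G^0$. For $\phi \circ \psi$, the only non-obvious point is $\phi(q_A) = p_A$ for $A \in \acf$. This follows from the same universal construction applied inside $L_R(\G_r)$: the subalgebra of $L_R(\G_r)$ generated by $\{p_A : A \in \G^0\}$ contains, by the same argument with $L_R(\G_r)$ in place of $L_R(\G)$, a family $q_A'$ with $q_A' = p_A$ for $A \in \G^0$ and satisfying the Boolean relations for every $A \in \acf$; but by construction $\phi(q_A) = q_A'$, and on the other hand $p_A \in L_R(\G_r)$ itself satisfies those relations and agrees with $q_A'$ on $\G^0$, so by the uniqueness clause of Proposition~2.8 we conclude $q_A' = p_A$. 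The principal obstacle in this plan is precisely this well-definedness/uniqueness step: ensuring that the idempotents $q_A$ one obtains by expressing $A \in \acf$ in terms of elements of $\G^0$ do not depend on the chosen expression, which is exactly what the machinery of Subsection~2.2 was developed to handle.
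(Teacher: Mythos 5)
Your proposal is correct and follows essentially the same route as the paper: one homomorphism comes for free from universality, and the other is built by manufacturing idempotents $\widetilde{p}_A$ for $A\in\acf$ inside $L_R(\G)$ via the chain $L_{\acf}\cong F_{\acf}(G^0)=F_{\G^0}(G^0)\cong L_{\G^0}\to L_R(\G)$ furnished by Lemma~\ref{lem:boolean.algebra.function} and Proposition~\ref{prop:univ.algebra.lattice.sets}. Your final uniqueness step (two Boolean families in $L_R(\G_r)$ agreeing on $\G^0$ must coincide on all of $\acf$, because $L_{\acf}\cong F_{\G^0}(G^0)$ is generated by the images of $\{p_B\}_{B\in\G^0}$) is just an abstract restatement of the paper's concrete computation with the expansion $1_B=\sum_i c_i 1_{B_i}$.
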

\begin{proof}
In $L_R(\G_r)$, the family $\{p_A, e, e^*:A\in \G^0, e\in \G^1\}$ satisfies the relations defining $L_R(\G)$. By universality this gives us a homomorphism $\pi: L_R(\G)\rightarrow L_R(\G_r)$. 
We build the inverse of this homomorphism by describing a family $\{\widetilde{p}_A, e, e^*:A\in \acf, e\in \G^1\})$ inside $L_R(\G)$ satisfying the relations defining $L_R(\G_r)$.

Let $L_{\G^0}$ be the algebra as in Definition~\ref{def:univ.algebra.lattice.sets}, taking $\G^0$ as $\acfg{C}$  and $\{q_A\}_{A\in\G^0}$ as the generators. By the universal property of $L_{\G^0}$, there exists an homomorphism $\phi:L_{\G^0}\to L_R(\G)$ such that $\phi(q_A)=p_A$ for all $A\in\G^0$. By Proposition~ \ref{prop:univ.algebra.lattice.sets}, there exists an isomorphism $\psi:F_{\G^0}(G^0)\to L_{\G^0}$ such that $\psi(1_A)=q_A$ for all $A\in\G^0$, where $F_{\G^0}(G^0)$ is the subalgebra of $F(G^0)$ generated by $\{1_C:C\in \G^0\}$. By Lemma~\ref{lem:boolean.algebra.function}, we have that $F_{\G^0}(G^0)=F_{\acf}(G^0)$. Now, for each $A\in\acf$, we define \[\widetilde{p}_A:=\phi(\psi(1_A)).\] Clearly $\widetilde{p}_A=p_A$ for all $A\in\G^0$. Also, it is readily checked that $\widetilde{p}_A\widetilde{p}_B=\widetilde{p}_{A\cap B}$ and $\widetilde{p}_{A\cup B}=\widetilde{p}_A+\widetilde{p}_B-\widetilde{p}_{A\cap B}$ for all $A,B\in\acf$.

Notice that the family $\{\widetilde{p}_A, e, e^*:A\in \acf, e\in \G^1\}$ inside $L_R(\G)$ satisfies the relations defining $L_R(\G_r)$ and hence, by universality, we obtain a  homomorphism $\theta:L_R(\G_r) \rightarrow L_R(\G)$. To finish the proof we have to show that $\pi$ and $\theta$ are inverses of each other. 
It is straightforward that $\theta \circ \pi = id$. We verify that $ \pi \circ \theta =id $ on the generators of $L_R(\G_r)$.  Note first that $\pi \circ \theta (e) = e$ and $\pi \circ \theta (e^*) = e^*$ for all $e \in \G^1$. Let $B \in \acf$. Then $\theta(p_B)= \widetilde{p}_B = \phi(\psi(1_B))$. By Lemma~\ref{lem:boolean.algebra.function}, with $\acfg{C} = \G^0$, we have that $1_B \in F_{\G^0}(G^0)$. So we can write $1_B$ as a linear combination of the form $1_B=\sum_{i=1}^n c_i 1_{B_i} $, where $B_i\in \G^0$.  Hence $\pi\circ \theta (p_B) =\sum_{i=1}^n c_i p_{B_i}$. Now note that, similarly to what was done for $\G^0$ above, we can find a homomorphism from $F_{\acf}(G^0)$ to $L_R(\G_r)$ sending $1_A$ to $p_A$ for every $A\in\acf$. Applying this homomorphism to the equation $1_B=\sum_{i=1}^n c_i 1_{B_i} $ we conclude that $p_B=\sum_{i=1}^n c_i p_{B_i}$ inside $L_R(\G_r).$ So $\pi\circ \theta (p_B) = p_B$ as desired.
\end{proof}


Although the algebras obtained with the different versions of generalized vertices agree, in some situations there are relevant differences depending on the definition used. For example, the graded uniqueness theorem for ultragraph Leavitt path algebras proved in \cite{imanfar2017leavitt} gives conditions when a homomorphism from an ultragraph Leavitt path algebra is injective. Among the conditions, one is required to check that the homomorphism does not vanish on the generalized vertices. Of course if the set of generalized vertices is larger, it is, a priori, harder to verify the condition. As we show below though,  this is not the case, that is, it is enough to check the condition on the set of generalized vertices defined without use of relative complements, that is, on $\G^0$. Furthermore, from this we clearly get the 
the graded uniqueness theorem for $L_R(\G)$ (by composing homomorphisms), which we state below for completeness.

\begin{lema}Suppose  that $\phi: L_R(\G_r) \rightarrow S$ is a homomorphism that does not vanish in $\G^0$. Then $\phi$ does not vanish in $\acf$.
\end{lema}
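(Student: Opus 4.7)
The plan is to exploit the fact that every nonempty element of $\acf$ necessarily contains a singleton from $\G^0$, and then use the multiplicative structure of $L_R(\G_r)$ to transfer nonvanishing from $\G^0$ to all of $\acf$.

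More precisely, let $A \in \acf$ be nonempty and pick any vertex $v \in A$. By the definition of $\G^0$ given in Definition~\ref{prop_vert_gen}, the singleton $\{v\}$ is a generalized vertex, i.e.\ $\{v\} \in \G^0$, and in particular $\{v\} \in \acf$. Since $\{v\} \cap A = \{v\}$, the defining relation $p_C p_D = p_{C \cap D}$ for $C, D \in \acf$ in $L_R(\G_r)$ gives $p_{\{v\}} p_A = p_{\{v\}}$ inside $L_R(\G_r)$. Applying the homomorphism $\phi$ yields $\phi(p_{\{v\}}) = \phi(p_{\{v\}})\phi(p_A)$.

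Now I invoke the hypothesis: because $\{v\} \in \G^0$ and $\{v\} \neq \emptyset$, we have $\phi(p_{\{v\}}) \neq 0$ in $S$. If $\phi(p_A)$ were zero, then the equation above would force $\phi(p_{\{v\}}) = 0$, a contradiction. Hence $\phi(p_A) \neq 0$, completing the argument.

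There is no real obstacle here beyond recognizing that singletons are built into $\G^0$ by definition, so one never has to use Lemma~\ref{lem:description.B} to expand a general element of $\acf$ as a union of set differences $A_i \setminus B_i$; the argument bypasses the complicated structure of $\acf$ entirely. The reason the lemma is of interest is precisely this point: passing from $\G^0$ to $\acf$ enlarges the family of idempotents one must check, but every element of $\acf$ remains $\supseteq$ a principal idempotent coming from $\G^0$.
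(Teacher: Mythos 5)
Your proof is correct and is essentially identical to the paper's: both pick a vertex $v$ in a nonempty $A\in\acf$, use $\{v\}\in\G^0$ and the relation $p_{\{v\}}p_A=p_{\{v\}\cap A}=p_{\{v\}}$, and conclude $\phi(p_A)\neq 0$ from $\phi(p_{\{v\}})\neq 0$. No differences worth noting.
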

\begin{proof}
Let $B\in \acf$ and take $v\in B$. Since $v\in \G^0$ we have that $\phi(v) \neq 0$. Hence $0\neq \phi (v) = \phi(v B) = \phi(v)\phi(B)$. We conclude that $\phi(B) \neq 0$.
\end{proof}

\begin{teorema}\label{acara}(c.f. \cite[Corollary~2.18]{imanfar2017leavitt})
Let $\G$ be an ultragraph, $R$ be a unital commutative ring and $S$ be a $\Z-$graded ring. If $\pi:L_R(\G)\rightarrow S$ is a graded ring homomorphism such that $\pi(r p_A) \neq 0$ for all non-empty $A\in \G^0$ and all nonzero $r\in R$, then $\pi$ is injective.
\end{teorema}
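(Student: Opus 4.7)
The plan is to reduce the statement to \cite[Corollary~2.18]{imanfar2017leavitt}, which is the corresponding graded uniqueness theorem phrased using the larger collection of generalized vertices $\acf$. The bridge between the two formulations is the isomorphism $\theta:L_R(\G_r)\to L_R(\G)$ constructed in Proposition~\ref{prop:isom.leavitt.path.algs}, together with the lemma immediately preceding the theorem statement.

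First, I would compose $\pi$ with $\theta$ to obtain a ring homomorphism $\tilde\pi:=\pi\circ\theta:L_R(\G_r)\to S$. Because $\theta$ sends each generator $s_e, s_e^*$ to itself and each $p_A$ with $A\in\G^0$ to the element of the same name in $L_R(\G)$ (which is immediate from the construction $\widetilde p_A=p_A$ for $A\in\G^0$), the map $\theta$ is a graded isomorphism for the canonical $\Z$-gradings. Consequently, $\tilde\pi$ is a graded ring homomorphism.

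Next, I would verify that $\tilde\pi(rp_B)\ne 0$ for every nonempty $B\in\acf$ and every nonzero $r\in R$. This mirrors the argument of the lemma just proven: pick any $v\in B$, so that $\{v\}\in\G^0$ and $p_{\{v\}}p_B=p_{\{v\}\cap B}=p_{\{v\}}$ holds in $L_R(\G_r)$. Multiplying by $\tilde\pi(p_{\{v\}})$ yields $\tilde\pi(rp_B)\tilde\pi(p_{\{v\}})=\tilde\pi(rp_{\{v\}})$. Since $\theta(p_{\{v\}})=p_{\{v\}}$, the hypothesis on $\pi$ gives $\tilde\pi(rp_{\{v\}})=\pi(rp_{\{v\}})\ne 0$, and therefore $\tilde\pi(rp_B)\ne 0$.

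With these two facts in hand, \cite[Corollary~2.18]{imanfar2017leavitt} applies directly to $\tilde\pi$ and forces $\tilde\pi$ to be injective. Since $\theta$ is an isomorphism, $\pi=\tilde\pi\circ\theta^{-1}$ is injective as well. The only substantive point is the propagation of the non-vanishing hypothesis from $\G^0$ to $\acf$, and the entire argument is otherwise a formal composition of Proposition~\ref{prop:isom.leavitt.path.algs} with the existing graded uniqueness theorem of \cite{imanfar2017leavitt}; there is no real obstacle beyond checking that $\theta$ respects the grading, which is clear from how it is defined on generators.
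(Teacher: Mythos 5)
Your proposal is correct and follows exactly the route the paper takes: the paper proves the lemma that non-vanishing on $\G^0$ propagates to $\acf$ (via $p_{\{v\}}p_B=p_{\{v\}}$ for $v\in B$), and then obtains Theorem~\ref{acara} "by composing homomorphisms" with the isomorphism of Proposition~\ref{prop:isom.leavitt.path.algs} and invoking \cite[Corollary~2.18]{imanfar2017leavitt}. Your write-up merely makes explicit the routine checks (that $\theta$ is graded and that the non-vanishing hypothesis transfers) which the paper leaves implicit.
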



Given an ample groupoid $\Gamma$, we denote by $A_R(\Gamma)$ the groupoid algebra, known as Steinberg algebra, defined in \cite{BenGroupoid}. In the next result, we give a realization of the Leavitt path algebra of an ultragraph as a Steinberg algebra.

\begin{teorema}\label{kart}
 Let $\G$ be an ultragraph. Then, there is an isomorphism $\kappa:L_R(\G)\to A_R(\mathbb{F} \ltimes_{\Delta} \ftight)$ given on the generators of $L_R(\G)$ by 
\begin{equation}\label{eqn:Leavitt.Steinb.isom}
    \begin{split}
        & \kappa(p_A)=  1_{\{\eword\}\times V_{(\eword,A,\eword)}}  \\
        & \kappa(s_e)= 1_{\{e\} \times V_{(e,r(e),e)}}, \text{ and } \\
        & \kappa(s_{e^*})= 1_{\{e^{-1}\}\times V_{(\eword,r(e),\eword)}} \\
    \end{split}
\end{equation}
  for each $A\in \mathcal{G}^{0}$ and $e\in\mathcal{G}^1$.
\end{teorema}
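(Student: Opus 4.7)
The plan is to build $\kappa$ as a composition of three isomorphisms, each of which has already been established (or is essentially available) in the paper.

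First, by Proposition~\ref{prop:isom.leavitt.path.algs} we may identify $L_R(\G)$ with $L_R(\G_r)$ (the algebra built using $\acf$ in place of $\G^0$). The main theorem of \cite{goncalves_royer_2019} realizes $L_R(\G_r)$ as a partial skew group ring $D\rtimes_\Theta \mathbb{F}$, where $D$ is the algebra described in (\ref{dfn:D.genrating.set}) and $\Theta=(\{D_t\}_{t\in\mathbb{F}},\{\theta_t\}_{t\in\mathbb{F}})$ is the algebraic partial action recalled just before Theorem~\ref{prop:equivalent.partial.actions}. Under this realization, the generators correspond to $p_A\mapsto 1_{Y_{(\eword,A,\eword)}}\delta_\eword$, $s_e\mapsto 1_{Y_{(e,r(e),e)}}\delta_e$, and $s_{e^*}\mapsto 1_{Y_{(\eword,r(e),\eword)}}\delta_{e^{-1}}$.

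Second, Theorem~\ref{prop:equivalent.partial.actions} asserts that $\Theta$ is equivalent to the dual $\hat\Delta=(\{\Lc(V_t,R)\}_{t\in\mathbb{F}},\{\hat\varphi_t\}_{t\in\mathbb{F}})$ of the topological partial action $\Delta$ on $\ftight$, and Proposition~\ref{prop:isom.ideals} gives the concrete ideal-preserving isomorphism $\psi$. Consequently the partial skew group rings are isomorphic, $D\rtimes_\Theta \mathbb{F}\cong \Lc(\ftight,R)\rtimes_{\hat\Delta}\mathbb{F}$, via $1_Y \delta_t \mapsto \psi^{-1}(1_Y)\delta_t$. Tracking the three generators through this step converts $1_{Y_{(\eword,A,\eword)}}\delta_\eword$, $1_{Y_{(e,r(e),e)}}\delta_e$ and $1_{Y_{(\eword,r(e),\eword)}}\delta_{e^{-1}}$ into $1_{V_{(\eword,A,\eword)}}\delta_\eword$, $1_{V_{(e,r(e),e)}}\delta_e$, and $1_{V_{(\eword,r(e),\eword)}}\delta_{e^{-1}}$ respectively, using Proposition~\ref{prop:iso.dual.D.with.tight} to match the indicator functions.

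Third, the main result of \cite{MR3743184}, explicitly recalled in the introduction, provides a natural isomorphism between the partial skew group ring arising from the dual of an ample topological partial action and the Steinberg algebra of the associated transformation groupoid; applied to $\hat\Delta$ this yields an isomorphism $\Lc(\ftight,R)\rtimes_{\hat\Delta}\mathbb{F}\cong A_R(\mathbb{F}\ltimes_\Delta \ftight)$ which sends $1_U\delta_t\mapsto 1_{\{t\}\times U}$ for every compact-open $U\subseteq V_t$. Composing the three isomorphisms produces the map $\kappa$ whose values on the generators of $L_R(\G)$ match exactly the formulas in (\ref{eqn:Leavitt.Steinb.isom}).

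The only real point to check is the identification of generators in the first step (the rest is essentially bookkeeping). The main obstacle is thus verifying that the isomorphism of \cite{goncalves_royer_2019} does send the Leavitt generators to the claimed elements of $D\rtimes_\Theta\mathbb{F}$, and then confirming that $\psi^{-1}$ sends the specific indicator functions $1_{Y_{(\alpha,A,\alpha)}}$ to $1_{V_{(\alpha,A,\alpha)}}$ (for which one uses Proposition~\ref{prop:iso.dual.D.with.tight} together with Lemma~\ref{lem:open.set.alpha.beta} and the description of $V_{\alpha\beta^{-1}}$ in Subsection~\ref{s:partialaction}). Once this identification is in place, the composition is clearly a well-defined $R$-algebra isomorphism, giving the theorem.
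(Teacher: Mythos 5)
Your proposal is correct and follows essentially the same route as the paper: compose the partial skew group ring realization $L_R(\G)\cong D\rtimes_\Theta\mathbb{F}$ from \cite{goncalves_royer_2019} with the equivalence of partial actions (Theorem~\ref{prop:equivalent.partial.actions}) and the isomorphism of \cite{MR3743184} between $\Lc(\ftight,R)\rtimes_{\hat\Delta}\mathbb{F}$ and $A_R(\mathbb{F}\ltimes_\Delta\ftight)$, then track the generators. The only (harmless) quibble is that \cite{goncalves_royer_2019} already uses the $\G^0$ convention, so its realization applies directly to $L_R(\G)$ and the initial detour through $L_R(\G_r)$ via Proposition~\ref{prop:isom.leavitt.path.algs} is not needed.
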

\begin{proof}
Let $\mathbb{F} \ltimes_{\Delta} \ftight$ denote the transformation groupoid associated with $\Delta$ in (\ref{dfn:trans.groupoid}). Then $\mathbb{F} \ltimes_{\Delta} \ftight$ is an ample Hausdorff groupoid, \cite[Lemma 5.4]{MR4109095}. By \cite[Theorem 3.2]{MR3743184} the partial skew group ring $\Lc(\ftight,R)\rtimes_{\hat{\Delta}} \mathbb{F} $ is isomorphic to $A_R(\mathbb{F} \ltimes_{\Delta} \ftight)$.

By \cite[Theorem 3.10]{goncalves_royer_2019} the Leavitt path algebra $L_R(\mathcal{G})$ associated with $\mathcal{G}$ is isomorphic to the partial skew ring $D \rtimes_{\Theta} \mathbb{F}$. But, by Theorem~\ref{prop:equivalent.partial.actions}, the partial actions $\Theta$ and $\hat{\Delta}$ are equivalent. Hence, $D \rtimes_{\Theta} \mathbb{F}$ is isomorphic to  $\Lc(\ftight,R)\rtimes_{\hat{\Delta}} \mathbb{F}$, from which it follows that $L_R(\mathcal{G})$ is isomorphic to $A_R(\mathbb{F} \ltimes_{\Delta} \ftight)$. The equations (\ref{eqn:Leavitt.Steinb.isom}) are obtained by composing the two isomorphisms mentioned above.  
\end{proof}

As the composition of two isomorphisms, $\kappa$ in Theorem~\ref{kart} factors through a partial skew ring  $\Lc(\ftight,R)\rtimes_{\hat{\Delta}} \mathbb{F}$. Although $\kappa$ is explicitly given on a generating set of $L_R(\G)$, the dynamics in the partial skew ring plays a big part in how $\kappa$ maps more general elements. In Section~\ref{sec:cores} we are interested in elements of the from $\kappa(s_{\alpha}p_As_{\beta^{*}})$, with $\alpha, \beta\in \awstar$ and $A\in\G^0$. To see what $\kappa(s_{\alpha}p_As_{\beta^{*}})$ looks like in $A_R(\mathbb{F} \ltimes_{\Delta} \ftight)$, it is helpful to first compute the product in the partial skew ring, and then map into $A_R(\mathbb{F} \ltimes_{\Delta} \ftight)$. That is, 
\[s_{\alpha}p_As_{\beta^{*}}\mapsto (1_{V_{(\alpha,r(\alpha),\alpha)}}\delta_\alpha)(1_{V_{(\eword,A,\eword)}}\delta_\eword)(1_{V_{(\eword,r(\beta),\eword)}}\delta_{\beta^{-1}})\in \Lc(\ftight,R)\rtimes_{\hat{\Delta}} \mathbb{F},\]
where, for example,  $\delta_{\alpha}$ is merely a placeholder indicating that $1_{V_{(\alpha,r(\alpha),\alpha)}}$ belongs to the ideal $E_{\alpha}$. Now, computing inside the partial skew ring yields 
\[ (1_{V_{(\alpha,r(\alpha),\alpha)}}\delta_\alpha)(1_{V_{(\eword,A,\eword)}}\delta_\eword)(1_{V_{(\eword,r(\beta),\eword)}}\delta_{\beta^{-1}}) = 1_{V_{(\alpha,A\cap r(\alpha)\cap r(\beta),\alpha)}}\delta_{\alpha\beta^{-1}} \]
(see for example \cite[Equation~(4.2)]{MR4109095}). Then, 
\begin{equation} \label{eqn:isom.general.products}
    \kappa(s_{\alpha}p_As_{\beta^{*}})= 1_{\{\alpha\beta^{-1}\}\times V_{(\alpha,A\cap r(\alpha)\cap r(\beta),\alpha)}}.
\end{equation}

\subsection{The C*-algebra of an ultragraph}

In this section, we show how the C*-algebra of an arbitrary ultragraph defined by Tomforde in \cite{MR2050134} can be written as a groupoid C*-algebra and as a partial crossed product, generalizing the results in \cite{MR3938320,MR2457327,tasca2020kms}.

\begin{definicao}(\cite{MR2050134})
Let $\mathcal{G}$ be an ultragraph. The \emph{C*-algebra associated to $\G$}, denoted by $C^*(\mathcal{G})$, is the universal C*-algebra generated by a collection of mutually orthogonal partial isometries $\{s_e:e\in \mathcal{G}^1\}$ and projections $\{p_A:A\in \mathcal{G}^0\}$, subject to the relations
\begin{enumerate}
\item\label{item:projection} $p_\emptyset=0,  p_Ap_B=p_{A\cap B},  p_{A\cup B}=p_A+p_B-p_{A\cap B}$, for all $A,B\in \mathcal{G}^0$;
\item $s_e^* s_e= p_{r(e)}$ for all $e\in \G^1$;
\item $s_e s_e^* \leq p_{s(e)}$ for all $e\in \G^1$;
\item $p_v=\sum\limits_{s(e)=v}s_es_e^*$ whenever $0<\vert s^{-1}(v)\vert< \infty$.
\end{enumerate}
\end{definicao}

As with Leavitt path algebras, we can define another C*-algebra by allowing the relations in \ref{item:projection} to be valid for all $A,B\in\acf$. That we still get $C^*(\G)$ with these extra relations is proven in \cite{MR2413313}.

For a labelled space $\lspace$, there is also a definition of a C*-algebra $C^*\lspace$ such that if $\lspace$ is the labelled space associated to $\G$, the $C^*\lspace\cong C^*(\G)$ \cite[Example 2]{MR3614028}. Let also $\Gamma(\ftight,\sigma)$ be the groupoid defined in Section \ref{groupoids} and $\Delta$ the partial action defined in Section \ref{s:partialaction}. Then $\Delta$ induces a C*-algebraic partial action and we may form the partial crossed product $C_0(\ftight)\rtimes_{\Delta} \mathbb{F}$.

\begin{theorem}  \label{thm:isom.cstar.algs}
    Let $\G$ be an arbitrary ultragraph, then
    \[C^*(\G)\cong C^*(\Gamma(\ftight,\sigma))\cong C^*(\mathbb{F} \ltimes_{\Delta}\ftight) \cong C_0(\ftight) \rtimes_{\Delta} \mathbb{F}.\]
\end{theorem}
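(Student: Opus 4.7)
The plan is to obtain the chain of isomorphisms by combining four known ingredients: the identification of the ultragraph C*-algebra with the labelled space C*-algebra, the realization of the labelled space C*-algebra as a partial crossed product, the standard identification between partial crossed products and C*-algebras of transformation groupoids, and the isomorphism of the transformation groupoid with the Renault--Deaconu groupoid. None of these is new, but I need to assemble them in a way valid for arbitrary ultragraphs (no restrictions on sinks or on the size of $r(e)$).

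First, by \cite[Example 2]{MR3614028}, the ultragraph C*-algebra $C^*(\G)$ is isomorphic to the labelled space C*-algebra $C^*(\lspace)$, where $\lspace$ is the normal weakly left-resolving labelled space associated with $\G$ in Definition~\ref{labelfromultra}. Next, I would invoke the main realization theorem of \cite{MR4109095}, which for an arbitrary normal weakly left-resolving labelled space expresses $C^*(\lspace)$ as the partial crossed product $C_0(\ftight)\rtimes_\Delta \mathbb{F}$, where $\Delta$ is the partial action recalled in Subsection~\ref{s:partialaction}. This gives the rightmost isomorphism $C^*(\G)\cong C_0(\ftight)\rtimes_\Delta \mathbb{F}$.

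For the middle isomorphism, I would apply the standard fact (due to Abadie; see also Exel's monograph on partial dynamical systems) that for a topological partial action of a discrete group $G$ on a locally compact Hausdorff space $X$, the partial crossed product $C_0(X)\rtimes G$ is canonically isomorphic to the full (equivalently, reduced, since our groupoids will be amenable in the relevant setting) C*-algebra of the transformation groupoid $G\ltimes X$. Since $\mathbb{F}$ is discrete and $\ftight$ is locally compact Hausdorff, we obtain $C_0(\ftight)\rtimes_\Delta \mathbb{F}\cong C^*(\mathbb{F}\ltimes_\Delta \ftight)$. Finally, \cite[Theorem~5.5]{MR4109095} gives a topological groupoid isomorphism $\mathbb{F}\ltimes_\Delta \ftight \cong \Gamma(\ftight,\sigma)$, and by functoriality of $C^*(-)$ on topological groupoids, this transfers to an isomorphism of the associated C*-algebras, closing the chain.

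The main obstacle is justifying the very first step: checking that the results of \cite{MR4109095} genuinely apply without additional hypotheses on the ultragraph (in particular, to labelled spaces built from ultragraphs with sinks or with infinite ranges) and that the definition of $\G^0$ used here (not closed under relative complements, per Remark~\ref{rmk:no.relative.complement}) still yields the same labelled space C*-algebra as in the cited references. Once one verifies that the accommodating family $\acf$ of Definition~\ref{prop_vert_gen} and the description of $\ftight$ obtained in Proposition~\ref{prop:tight.filters} match the hypotheses of the labelled space results used, the remaining isomorphisms are essentially formal, and the generator formulas analogous to those in Theorem~\ref{kart} can be traced through the chain to give an explicit description of the composite isomorphism.
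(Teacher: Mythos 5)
Your proposal is correct, and it assembles the chain from essentially the same corpus of prior results as the paper, but it enters the chain at a different point. The paper's proof establishes the link $C^*(\G)\cong C^*(\Gamma(\ftight,\sigma))$ first, citing \cite[Example 2]{MR3614028} together with \cite[Theorems 3.7 and 5.8]{Gil3} (the direct Renault--Deaconu groupoid model of the labelled space C*-algebra), and then closes the chain with \cite[Theorem 5.5]{MR4109095} and Abadie's theorem \cite[Theorem 3.3]{MR2045419}. You instead establish $C^*(\G)\cong C_0(\ftight)\rtimes_{\Delta}\mathbb{F}$ first, via the partial crossed product realization theorem of \cite{MR4109095}, and then work back through Abadie and \cite[Theorem 5.5]{MR4109095}; this bypasses \cite{Gil3} entirely and concentrates the labelled-space-specific input in a single reference. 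Both routes are valid; the paper's buys a direct groupoid model, yours a more economical citation list. Your caveats are well placed but already resolved in the paper: the accommodating family $\acf$ of Definition~\ref{prop_vert_gen} \emph{is} closed under relative complements (it is only $\G^0$ that need not be), so the labelled space is normal and weakly left-resolving as required, and the fact that imposing the relations of item 1 for all $A,B\in\acf$ still yields $C^*(\G)$ is the content of \cite{MR2413313}, which the paper invokes just before the theorem. One small correction: Abadie's result as used here identifies the partial crossed product with the \emph{full} C*-algebra of the transformation groupoid, so the parenthetical appeal to amenability to pass to the reduced algebra is unnecessary for the statement as written.
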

\begin{proof}
The first isomorphism follows from \cite[Example 2]{MR3614028} and \cite[Theorems 3.7 and 5.8]{Gil3}, the second isomorphism follows from \cite[Theorem 5.5]{MR4109095} and the third isomorphism follows from \cite[Theorem 3.3]{MR2045419}.
\end{proof} 

We remark that the C*-algebra $C_0(\ftight)\rtimes_{\Delta} \mathbb{F}$ is generated by 
\[ \{1_{V_{(\eword,A,\eword)}}\delta_\eword,1_{V_{(a,r(a),a)}}\delta_a: A\in \acf, a\in \alf\}.\]
Let $\kappa_*:C^*(\G)\to C^*(\mathbb{F}\rtimes_{\Delta}\ftight)$ denote the isomorphism in Theorem~\ref{thm:isom.cstar.algs}. Then $\kappa_*$ is given on the generators of  $C^*(\G)$ by 
\begin{equation} \label{eqn:isom.cstar.generators}
  \begin{split}
        & \kappa_*(p_A)= 1_{\{\eword\}\times V_{(\eword,A,\eword)}} \\
        & \kappa_*(s_e)=1_{\{e\}\times V_{(e,r(e),e)}}. 
    \end{split}
\end{equation}

\begin{remark}\label{rmk:MM.groupoid}
As pointed out in Remark \ref{rmk:ultraset}, the description of the ultragraph groupoid $\mathfrak{G}_{\G}$ in \cite{MR2457327} is incomplete, which implies that the isomorphism in \cite[Theorem 22]{MR2457327} does not always hold. For instance, in Example \ref{picapau}, there is a sequence in $V_{(e_0,r(e_0),e_0)}$ that converges to an element $\xi$, which corresponds to a pair $(e_0,\ft)$, where $\ft$ is not a principal filter. This implies that the set $\mathcal{A}'((e_0,r(e_0)),e_0)$ described in \cite{MR2457327} is not actually compact because it contains a sequence with no convergent subsequence, and therefore the characteristic function of this set is not an element of $C^*(\mathfrak{G}_{\G})$. This means that the map from $C^*(\G)$ to $C^*(\mathfrak{G}_{\G})$ that sends $s_{e_0}$ to $1_{\mathcal{A}'((e_0,r(e_0)),e_0)}$ (as in \cite{MR2457327}) is not well-defined.
\end{remark}

\section{Abelian core subalgebras and the generalized uniqueness theorems} \label{sec:cores}

In this section we prove generalized uniqueness theorems for ultragraph algebras in both the analytical and algebraic setting. These uniqueness theorems have the advantage of not requiring an aperiodicity nor a gauge invariance nor a graded homomorphism assumption. By identifying the abelian core subalgebras we also answer, in the context of ultragraph algebras, a question raised in \cite{GILCANTO2018227} for Leavitt path algebras, namely we characterize the ultragraph Leavitt path algebras such that the center is equal to the core.

For the results of this section, we will use the isotropy bundle of a groupoid. Recall that if $G$ is a groupoid with source map $s$ and range map $r$, the isotropy bundle is given by $\text{Iso}(G)=\{\gamma\in G\mid s(\gamma)=r(\gamma)\}$. For more details, we refer the reader to \cite{BNRSW,CEP}. To fix the notation, $\text{Iso}(G)^0$ represents the interior of the isotropy bundle of a groupoid $G$.

 We recall below the generalized graded uniqueness theorem for Steinberg algebras (see also \cite{CEM}).
 
 \begin {teorema}[Generalized Uniqueness Theorem]\cite[Theorem~3.1]{CEP} 
\label {thm:Uniqueness}
Let ${\mathcal H}$ be a second-countable, ample, Hausdorff groupoid and
let $R$ be a unital commutative ring.
Suppose that $A$ is an $R$-algebra and that $\pi :A_R({\mathcal H}) \to A$ is a ring homomorphism.
Then $\pi $ is injective if and only if $\pi \circ \iota $ is injective, where $\iota$ is the natural inclusion of $A_R(\text{Iso}({\mathcal H})^0)$ in $  A_R({\mathcal H})$.
\end {teorema}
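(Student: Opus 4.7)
The forward direction is immediate: if $\pi$ is injective then so is $\pi \circ \iota$, since $\iota$ is a monomorphism. For the converse, I would argue by contrapositive. Suppose $f \in A_R(\mathcal{H})$ is nonzero with $\pi(f) = 0$; the aim is to produce a nonzero element $g \in A_R(\text{Iso}(\mathcal{H})^0)$ with $\pi(g) = 0$. The central observation is that $\ker(\pi)$ is a two-sided ideal in $A_R(\mathcal{H})$, so it suffices to construct $g$ from $f$ by left and right multiplication by characteristic functions of well-chosen compact open bisections; then $g$ will automatically lie in $\ker(\pi)$, and the task reduces to arranging that its support sits inside $\text{Iso}(\mathcal{H})^0$ while it remains nonzero.

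Writing $f$ in normal form as $f = \sum_{i=1}^n r_i 1_{B_i}$, where the $B_i$ are mutually disjoint compact open bisections and $r_i \neq 0$, I would fix $\gamma_0 \in B_{i_0}$ and consider the left translate $1_{B_{i_0}^{-1}} \cdot f$. Under the groupoid product, this maps $\gamma_0 \mapsto s(\gamma_0) \in \mathcal{H}^{(0)} \subseteq \text{Iso}(\mathcal{H})^0$, so that the $i_0$-th summand contributes a nonzero term supported in the unit space. The remaining challenge is to control the other summands $1_{B_{i_0}^{-1} \cdot B_j}$ for $j \neq i_0$: they may fall either inside or outside $\text{Iso}(\mathcal{H})^0$, and could interfere at the distinguished unit $s(\gamma_0)$.

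The main obstacle is to kill off, via further multiplications by compact open bisections (chosen using the ampleness of $\mathcal{H}$ together with the Hausdorff property), those summands that lie outside $\text{Iso}(\mathcal{H})^0$ without simultaneously killing the surviving $i_0$-th term. I would attempt this inductively: at each step, identify one offending summand and shrink the ambient bisection to eliminate its support while preserving the nonzero contribution at $s(\gamma_0)$. Since $\text{Iso}(\mathcal{H})^0$ is open, a sufficiently small compact open neighborhood of $s(\gamma_0)$ sits inside it, and Hausdorff separation of finitely many disjoint supports guarantees that an appropriate shrinking exists. Finiteness of the normal form then ensures that the procedure terminates, yielding the desired nonzero $g \in A_R(\text{Iso}(\mathcal{H})^0) \cap \ker(\pi)$ and thus witnessing the non-injectivity of $\pi \circ \iota$. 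The delicate part, which I expect to be the crux of the argument, is establishing that this inductive shrinking can always be performed coherently; the finite disjointness of the $B_i$'s combined with the openness of $\text{Iso}(\mathcal{H})^0$ is what makes it possible.
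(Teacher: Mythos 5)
First, a point of reference: the paper does not prove this statement. It is imported verbatim from \cite[Theorem~3.1]{CEP} and used as a black box, so there is no internal proof to compare against; your attempt has to be measured against the known argument from the literature. Your skeleton matches that argument: reduce to showing every nonzero ideal of $A_R(\mathcal{H})$ meets $A_R(\text{Iso}(\mathcal{H})^0)$ nontrivially, write $f=\sum_{i=1}^n r_i 1_{B_i}$ in normal form, pass to $1_{B_{i_0}^{-1}}f$, and compress. One small remark in your favour: the ``interference at $s(\gamma_0)$'' you worry about does not occur, since for $j\neq i_0$ the bisection $B_{i_0}^{-1}B_j$ is automatically disjoint from $\mathcal{H}^{(0)}$ (an element $\alpha^{-1}\beta$ with $\alpha\in B_{i_0}$, $\beta\in B_j$ is a unit only if $\alpha=\beta$, contradicting disjointness of the $B_i$), so $1_{B_{i_0}^{-1}}f$ restricted to the unit space is exactly $r_{i_0}1_{s(B_{i_0})}\neq 0$.

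The genuine gap is precisely at the step you flag as delicate, and the tools you invoke (finite disjointness of the $B_i$, openness of $\text{Iso}(\mathcal{H})^0$, Hausdorff separation) do not close it. Fix a unit $u$ and a summand $C_j=B_{i_0}^{-1}B_j$, and compress to $VC_jV=\{\gamma\in C_j: r(\gamma)\in V,\ s(\gamma)\in V\}$ for small compact open $V\ni u$ in $\mathcal{H}^{(0)}$. Three cases occur: (a) no $\gamma\in C_j$ has $r(\gamma)=s(\gamma)=u$, and then compactness kills the summand for $V$ small; (b) the unique such $\gamma$ lies in $\text{Iso}(\mathcal{H})^0$, and then $VC_jV$ eventually lands inside $\text{Iso}(\mathcal{H})^0$ and may be kept; (c) the unique such $\gamma$ lies in $\text{Iso}(\mathcal{H})\setminus\text{Iso}(\mathcal{H})^0$. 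In case (c) \emph{no} shrinking around $u$ helps: $\gamma$ survives in $VC_jV$ for every $V\ni u$ and never enters the interior of the isotropy, so your induction has no move, and an arbitrary choice of $\gamma_0\in B_{i_0}$ can put you in this case. The missing idea is that the base unit $u$ must be \emph{chosen}, not taken arbitrarily: for each $j$ the set of units falling in case (c), namely $s\bigl(C_j\cap(\text{Iso}(\mathcal{H})\setminus\text{Iso}(\mathcal{H})^0)\bigr)$, is closed with empty interior in $\mathcal{H}^{(0)}$ --- closed because $C_j\cap\text{Iso}(\mathcal{H})$ is compact and $s\bigl(C_j\cap\text{Iso}(\mathcal{H})^0\bigr)$ is open, and with empty interior because a nonempty open set of such units would pull back under the homeomorphism $s|_{C_j}$ to an open subset of $C_j$ contained in $\text{Iso}(\mathcal{H})$, hence in $\text{Iso}(\mathcal{H})^0$, a contradiction. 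A finite union of such nowhere dense sets cannot cover the nonempty open set $s(B_{i_0})$, so a good unit $u$ exists, and only after this selection does the shrinking procedure you describe go through. Without this nowhere-density argument the proof is incomplete.
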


In the context of groupoid C*-algebras, the analogous result is the following:

\begin{teorema}\label{thm:uniqueness}\cite[Theorem~3.1 (b)]{BNRSW}
  Let $G$ be a locally compact Hausdorff \'etale groupoid. If $\pi : C^*_r(G) \to D$ is a
    $C^*$-homomorphism, then $\pi$ is injective if and only if
    $\pi \circ \iota_r$ is an injective homomorphism of
    $C^*_r(\text{Iso}(G)^0))$,  where $ \iota_r : C^*_r(\text{Iso}(G)^0)) \to C^*_r(G)$ is the inclusion map.
\end{teorema}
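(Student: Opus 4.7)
One direction is immediate: since $\mathrm{Iso}(G)^0$ is an open subgroupoid of $G$, the inclusion $\iota_r : C^*_r(\mathrm{Iso}(G)^0) \hookrightarrow C^*_r(G)$ is itself an injective $\ast$-homomorphism (a standard fact for open subgroupoids of étale groupoids via compatibility with regular representations). Hence if $\pi$ is injective, so is $\pi \circ \iota_r$. The bulk of the work is the converse, and the plan is to exploit a faithful conditional expectation $E : C^*_r(G) \to C^*_r(\mathrm{Iso}(G)^0)$ together with a compatibility result that transports it to $\pi(C^*_r(G))$.

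First I would construct $E$. At the level of $C_c(G)$, set $E(f) = f \cdot 1_{\mathrm{Iso}(G)^0}$; since $\mathrm{Iso}(G)^0$ is open in $G$, this lands in $C_c(\mathrm{Iso}(G)^0)$. Using the formula for the reduced norm via the regular representations $\{\mathrm{Ind}\,\delta_u\}_{u \in G^{(0)}}$, one checks that $E$ is contractive and hence extends to a conditional expectation onto $C^*_r(\mathrm{Iso}(G)^0)$. Faithfulness of $E$ on positive elements follows because, for $a \geq 0$, the diagonal entries $\langle \mathrm{Ind}\,\delta_u(a) \delta_u, \delta_u \rangle$ are computed from values of $a$ on the isotropy at $u$, which lie in $\mathrm{Iso}(G)^0$; vanishing of $E(a)$ forces these diagonal entries to vanish and then $a = 0$.

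The main obstacle is showing that $\pi$ is \emph{compatible} with $E$ in the sense that there is a (necessarily faithful) map $\widetilde{E} : \pi(C^*_r(G)) \to \pi(C^*_r(\mathrm{Iso}(G)^0))$ with $\widetilde{E} \circ \pi = \pi \circ E$. Once this is in place the theorem is almost immediate: if $\pi(a) = 0$, then $\widetilde{E}(\pi(a^*a)) = \pi(E(a^*a)) = 0$, so $E(a^*a) = 0$ by injectivity of $\pi \circ \iota_r$, and then $a^*a = 0$ by faithfulness of $E$, giving $a = 0$. To build $\widetilde{E}$ one may proceed representation-theoretically: disintegrate $\pi$ over the spectrum of $C_0(G^{(0)})$ (which sits inside $C^*_r(\mathrm{Iso}(G)^0)$ as the unit fibres), realise $\pi$ as a direct integral of representations induced from the isotropy, and define $\widetilde{E}$ fibrewise via the natural conditional expectation on each isotropy group C*-algebra. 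Glueing the fibrewise expectations into a well-defined map on $\pi(C^*_r(G))$ is the delicate step; it is where Hausdorffness of $G$ and the hypothesis that $\mathrm{Iso}(G)^0$ is an open subgroupoid are both essential, since they guarantee that the restriction $E$ preserves the reduced-norm structure coherently across fibres. With this compatibility achieved, the injectivity of $\pi$ follows as sketched.
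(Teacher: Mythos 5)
First, a remark on the comparison itself: the paper offers no proof of this statement — it is imported verbatim from the cited reference \cite[Theorem~3.1(b)]{BNRSW} — so your proposal has to be measured against the argument in that reference rather than against anything in the present paper.

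Your easy direction is fine, and your construction of the faithful conditional expectation $E$ is essentially the right first step (modulo the technical point that for $f\in C_c(G)$ the restriction $f|_{\text{Iso}(G)^0}$ need not lie in $C_c(\text{Iso}(G)^0)$, since $\text{Iso}(G)^0$ is open but in general not closed; one must argue separately that it still defines an element of $C^*_r(\text{Iso}(G)^0)$). The genuine gap is the step you yourself flag as the main obstacle: the existence of $\widetilde{E}$ with $\widetilde{E}\circ\pi=\pi\circ E$. That identity is exactly the assertion that $E(\ker\pi)\subseteq\ker\pi$, and once you have it the theorem follows in two lines from faithfulness of $E$ and injectivity of $\pi\circ\iota_r$, as you note. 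So you have reduced the theorem to a claim at least as strong as the theorem, and the sketch offered for it does not go through: an arbitrary $C^*$-homomorphism into an arbitrary $C^*$-algebra $D$ does not come with a disintegration as a direct integral of representations induced from isotropy groups (results of that flavour, of Effros--Hahn type, concern irreducible representations of \emph{full} $C^*$-algebras of amenable second countable groupoids), and even granting such a picture, the well-definedness of a fibrewise expectation on the image $\pi(C^*_r(G))$ is precisely the circular point — it presupposes that $\pi(a)$ determines $\pi(E(a))$.

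The actual proof avoids constructing any map on the image. After establishing the faithful expectation $E$, one argues by contradiction: if $a\in\ker\pi$ is positive and nonzero, then $E(a)\neq 0$, and a compression lemma produces $h\in C_c(G^{(0)})$ with $\|hah-hE(a)h\|$ small while $\|hE(a)h\|\geq\tfrac{1}{2}\|E(a)\|$; this lemma is where the openness of $\text{Iso}(G)^0$ and the \'etale structure do real work, in controlling the off-isotropy part of an element of $C_c(G)$ after compressing by a function concentrated near a well-chosen unit. Since $\pi$ is isometric on $C^*_r(\text{Iso}(G)^0)$ (being injective there) and kills $hah$, one gets
\[
\tfrac{1}{2}\|E(a)\|\leq\|\pi(hE(a)h)\|=\|\pi(hE(a)h-hah)\|\leq\|hE(a)h-hah\|,
\]
a contradiction. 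To repair your proposal you would need either to prove this compression lemma or to give an honest proof that $\ker\pi$ is $E$-invariant; as written, the central step is asserted rather than proved.
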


We want to use the above uniqueness results in combination with our characterization of ultragraph algebras as groupoid algebras to obtain generalized uniquess theorems for ultragraph algebras. This motivates our definition of the abelian core of an ultragraph algebra, which we present after we recall some relevant concepts below. 

Let $\G$ be an ultragraph and let $R$ be a unital commutative ring. Recall that $L_R(\G)=\mathrm{span}_R\{s_\alpha p_As_{\beta^*}: \text{$\alpha$, $\beta$ are paths, and $A\in\G^0$}\}$ and if $R=\mathbb{C}$, then for $C^*(\G)$ the same holds by taking the closure of the span on the right side (see \cite{imanfar2017leavitt} and \cite{MR2050134}). Denote the set of generators of the algebra by $G_\G$, that is,

\[G_\G = \{s_\alpha p_A s_{\beta^*}: \text{$\alpha$, $\beta$ are paths, $A\in\G^0$, and $r(\alpha)\cap A \cap r(\beta)\neq \emptyset$\}.}\]

\begin{definicao}
Let $\G$ be an ultragraph and let $R$ be a unital commutative ring. The diagonal subalgebra $D(L_R(\G))$  of $L_R(\G)$ (respectively $D(C^*(\G))$ of $C^*(\G)$) is the $R-$subalgebra (respectively $C^*$-subalgebra) generated by elements of $G_\G$ such that $\alpha=\beta$.  
The commutative (abelian) core of $L_R(\G)$ (respectively of $C^*(\G)$) is the subalgebra $M(L_R(\G))$ (respectively  the $C^*$-subalgebra $M(C^*(\G))$) generated by elements of $G_\G$ that satisfy:
\begin{enumerate}
\item $\alpha = \beta$;
\item  $\alpha = \beta \lambda_{\beta}$ and $\lambda_{\beta}$ is a loop without exits;
\item  $\beta = \alpha \lambda_{\alpha}$ and $\lambda_{\alpha}$ is a loop without exits.
\end{enumerate}
We denote by $G^M_\G$ the set of all elements in $G_\G$ that satisfy one of the three above conditions.
\end{definicao}

\begin{remark}
The word \emph{core} is overused in the context of ultragraph algebras. The reader should not confuse the abelian core defined above with the core subalgebra defined in \cite{MR3856223}.
\end{remark}

Our next goal is to identify the abelian core of an ultragraph algebra with the algebra of the interior of the isotropy.  For this it will be convenient to use $\mathbb{F} \ltimes_{\Delta} \ftight$ as the groupoid associated with $\G$. We start by identifying $\text{Iso}(\mathbb{F}\ltimes_{\Delta} \ftight)^0$, but for this we need a couple of auxiliary results first.

\begin{lema}\label{lem:isolated.vs.exits}
An element $\xi\in\ftight$ with associated path $\beta\gamma^{\infty}$, for some path $\beta$ and some loop $\gamma$, is isolated if and only if $\gamma$ has no exits. In this case, $V_{(\beta\gamma^n,\{s(\gamma)\},\beta\gamma^n)}=\{\xi\}$ for all $n\in\N$.
\end{lema}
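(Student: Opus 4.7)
The plan is to prove each direction using the characterization of tight filters in Proposition~\ref{prop:tight.filters}, the uniqueness of the complete family for an infinite path in Lemma~\ref{lem:infinite.family}, the principal-ultrafilter description of Lemma~\ref{lem:ultrafilter.equals.principal}, and the density result Lemma~\ref{lem:Y.dense.tight}. The ``if'' direction is obtained by proving the stronger statement that $V_{(\beta\gamma^n,\{s(\gamma)\},\beta\gamma^n)} = \{\xi\}$ for all $n\in\N$; the ``only if'' direction is done by contrapositive.

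For the ``if'' direction, assume $\gamma$ has no exits, fix $n\in\N$, and take $\eta\in V_{(\beta\gamma^n,\{s(\gamma)\},\beta\gamma^n)}$. By Remark~\ref{remark.when.in.xialpha} the labelled path $\alpha$ of $\eta$ begins with $\beta\gamma^n$ and $\{s(\gamma)\}\in\eta_{|\beta\gamma^n|}$. First, $\eta$ cannot be of finite type: $s(\gamma)$ lies on the loop $\gamma$ (so is not a sink), and since $\gamma$ has no exits the only edge with source $s(\gamma_1)=s(\gamma)\in r(\gamma_{|\gamma|})$ is $\gamma_1$ (interpreting $\gamma_{|\gamma|+1}$ cyclically as $\gamma_1$), so none of the three conditions of Proposition~\ref{prop:tight.filters} applied to the witness $\{s(\gamma)\}$ can hold. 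Hence $\alpha$ is infinite; by Lemma~\ref{lem:infinite.family} the source of $\alpha_{|\beta\gamma^n|+1}$ equals $s(\gamma)$, and no-exits force $\alpha_{|\beta\gamma^n|+1}=\gamma_1$. Iterating the no-exit condition at each $r(\gamma_i)$ pins down every subsequent edge of $\alpha$, giving $\alpha=\beta\gamma^\infty$ and $\eta=\xi$. Therefore $V_{(\beta\gamma^n,\{s(\gamma)\},\beta\gamma^n)}=\{\xi\}$ is open and $\xi$ is isolated.

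For the contrapositive of the ``only if'' direction, assume $\gamma$ has an exit and let
\[U = V_{(\beta',A,\beta'):(\delta_1,A_1,\delta_1),\ldots,(\delta_m,A_m,\delta_m)}\]
be a basic open neighborhood of $\xi$. Choose $N\in\N$ with $|\beta\gamma^N| > \max\{|\beta'|,|\delta_1|,\ldots,|\delta_m|\}$. If the exit is an edge $e\neq \gamma_{i+1}$ with $s(e)\in r(\gamma_i)$ (again with $\gamma_{|\gamma|+1}$ read cyclically as $\gamma_1$), apply Lemma~\ref{lem:Y.dense.tight} to pick $\eta\in V_{(\beta\gamma^N\gamma_{1,i}e,\,r(e),\,\beta\gamma^N\gamma_{1,i}e)}$. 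If the exit is a sink $w\in r(\gamma_i)$, let $\eta$ be the finite-type tight filter associated to the pair $(\beta\gamma^N\gamma_{1,i},\usetr{\{w\}}{\acfrg{\beta\gamma^N\gamma_{1,i}}})$, which is tight by Proposition~\ref{prop:tight.filters}(i). In either case the path of $\eta$ diverges from $\beta\gamma^\infty$ at position $|\beta\gamma^N\gamma_{1,i}|+1$, so $\eta\neq\xi$. The membership $\eta\in U$ is verified via Lemma~\ref{lem:ultrafilter.equals.principal}: for every $j<|\beta\gamma^N|$ the filters $\eta_j$ and $\xi_j$ coincide with the principal ultrafilter $\usetr{\{s((\beta\gamma^\infty)_{j+1})\}}{\acfrg{(\beta\gamma^\infty)_j}}$, so $A\in\eta_{|\beta'|}$ exactly as for $\xi$, and each $A_j\notin\eta_{|\delta_j|}$ for the same reason whenever $\delta_j$ is a beginning of $\eta$'s path (with the remaining case being immediate).

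The main obstacle will be the careful bookkeeping in the backward direction: verifying that the constructed $\eta$ actually lies in the specified neighborhood $U$ by comparing the complete families of $\eta$ and $\xi$ at every level strictly below the divergence point. This is ultimately a direct consequence of Lemma~\ref{lem:ultrafilter.equals.principal} together with the fact that infinite-type tight filters have complete families of principal ultrafilters on singletons, but writing it out requires handling separately the case when $\delta_j$ fails to be a beginning of $\eta$'s path.
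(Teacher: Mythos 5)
Your proposal is correct and follows essentially the same route as the paper: the ``if'' direction uses the no-exit condition to pin down every level of a tight filter in $V_{(\beta\gamma^n,\{s(\gamma)\},\beta\gamma^n)}$, forcing it to equal $\xi$, and the ``only if'' direction uses the exit (edge or sink) to plant a distinct tight filter in an arbitrary basic neighborhood of $\xi$. You simply spell out the membership and neighborhood-basis verifications that the paper's proof asserts without detail.
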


\begin{proof}
Suppose first that $\gamma$ has an exit. Then for any open neighborhood $V$ of $\xi$, for $n$ sufficiently large, $\xi\in V_{(\beta\gamma^n,\{s(\gamma)\},\beta\gamma^n)}\scj V$. We can then use the exit to build an element of $\ftight$ different from $\xi$ which is in $V_{(\beta\gamma^n,\{s(\gamma)\},\beta\gamma^n)}$, so that $\xi$ is not isolated.

Now suppose that $\gamma$ has no exits. This means that for all $i=1,\ldots,|\gamma|$ we have that $s^{-1}(s(\gamma_i))=\gamma_i$, which then implies that $V_{(\beta\gamma^n,\{s(\gamma)\},\beta\gamma^n)}=\{\xi\}$ for all $n\in\N$, and in particular $\xi$ is an isolated point.
\end{proof}

\begin{lema} \label{lem:interior.isotropy}
For $(t,\xi)\in \mathbb{F} \ltimes_{\Delta} \ftight$ such that $t\neq\eword$, we have that $(t,\xi)\in \text{Iso}(\mathbb{F} \ltimes_{\Delta} \ftight)^0$ if, and only if, there exist a path $\beta$ and a loop without exits $\gamma$ such that $t=\beta\gamma\beta^{-1}$ or $t=\beta\gamma^{-1}\beta^{-1}$, and $\xi$ is the element of $\ftight$ associated to $\beta\gamma^{\infty}$. In this case $(t,\xi)$ is an isolated point of $\text{Iso}(\mathbb{F} \ltimes_{\Delta} \ftight)^0$.
\end{lema}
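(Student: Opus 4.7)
The plan is to combine Lemma~\ref{lem:isolated.vs.exits}, which pins down exactly when a filter associated to an eventually periodic path is isolated in $\ftight$, with a direct analysis of the fixed-point equation $\varphi_t(\xi)=\xi$. Observe that $(t,\xi)\in\text{Iso}(\mathbb{F}\ltimes_{\Delta}\ftight)$ if and only if $\varphi_t(\xi)=\xi$, and since $\mathbb{F}$ carries the discrete topology, basic open neighborhoods of $(t,\xi)$ in the transformation groupoid have the form $\{t\}\times U$ with $U\subseteq V_t$ open.

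For the reverse direction, suppose $t=\beta\gamma^{\epsilon}\beta^{-1}$ with $\epsilon\in\{+1,-1\}$, $\gamma$ a loop without exits, and let $\xi$ be the unique tight filter associated to the infinite path $\beta\gamma^{\infty}$ (Proposition~\ref{prop:tight.filters}). The intuitive description of the partial action given immediately after~(\ref{eqn:labelled.top.partial.action}) yields $\varphi_t(\xi)=\xi$, placing $(t,\xi)$ in the isotropy. Lemma~\ref{lem:isolated.vs.exits} shows that $\xi$ is isolated in $\ftight$, hence $\{t\}\times\{\xi\}$ is an open subset of the groupoid that sits inside the isotropy. Therefore $(t,\xi)$ is an isolated point of $\text{Iso}(\mathbb{F}\ltimes_{\Delta}\ftight)^0$.

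For the forward direction, I would first bring $t\neq\eword$ to reduced form in $\mathbb{F}$. Since $V_t\neq\emptyset$, \cite[Lemma~3.11]{MR4109095} leaves three cases: $t=\alpha$, $t=\alpha^{-1}$ (with $\alpha\in\awplus$), or $t=\alpha\delta^{-1}$ (with $\alpha,\delta\in\awplus$ and $r(\alpha)\cap r(\delta)\neq\emptyset$). Let $\mu$ be the labelled path of $\xi$. Using the description of $\varphi_t$ as a cut-and-glue map on initial segments, $\varphi_t(\xi)=\xi$ is equivalent to a word equation involving $\mu$. In each case a length comparison rules out finite-type filters (the two sides of the equation would have different lengths), forcing $\mu$ to be infinite; a letter-by-letter comparison then yields $\mu=\beta\gamma^{\infty}$ with $\gamma$ a loop, and $t=\beta\gamma\beta^{-1}$ (when $|\alpha|\geq|\delta|$) or $t=\beta\gamma^{-1}\beta^{-1}$ (when $|\delta|>|\alpha|$). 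In particular $\xi$ is uniquely determined by $t$ as the sole tight filter associated to $\beta\gamma^{\infty}$, so $\xi$ is the only fixed point of $\varphi_t$.

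To close the forward direction, use that $(t,\xi)$ is in the interior of the isotropy to pick a basic open neighborhood $\{t\}\times U\subseteq\text{Iso}(\mathbb{F}\ltimes_{\Delta}\ftight)$ with $U\subseteq V_t$ open. Every $\eta\in U$ then satisfies $\varphi_t(\eta)=\eta$, so by uniqueness $U=\{\xi\}$. Hence $\xi$ is isolated in $V_t$, and therefore in $\ftight$ since $V_t$ is open, so Lemma~\ref{lem:isolated.vs.exits} forces $\gamma$ to have no exits. The main obstacle in this plan is the word-equation analysis: one must carefully exploit the reduced form of $t$, rule out finite-type filters via length considerations, and extract a genuine loop $\gamma$ on the ultragraph (i.e.\ verify $s(\gamma_1)\in r(\gamma_{|\gamma|})$) from the repeating block of $\mu$.
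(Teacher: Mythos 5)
Your proposal is correct and follows essentially the same route as the paper: the reverse direction is identical (a fixed point of $\varphi_t$ together with Lemma~\ref{lem:isolated.vs.exits} makes $\{(t,\xi)\}$ open inside the isotropy), and the forward direction likewise passes through the classification of nontrivial isotropy elements as $t=\beta\gamma^{\pm1}\beta^{-1}$ with $\xi$ the unique tight filter over $\beta\gamma^{\infty}$, then uses openness of the resulting singleton and Lemma~\ref{lem:isolated.vs.exits} to conclude $\gamma$ has no exits. The only difference is that where you sketch the word-equation/fixed-point analysis by hand, the paper simply cites \cite[Remark~6.11]{MR4109095} for that classification.
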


\begin{proof}
Suppose first that $(t,\xi)\in \text{Iso}(\mathbb{F} \ltimes_{\Delta} \ftight)^0$. By \cite[Remark~6.11]{MR4109095}, there exist a path $\beta$ and a loop $\gamma$ such that $t=\beta\gamma\beta^{-1}$ or $t=\beta\gamma^{-1}\beta^{-1}$, and the associated path of $\xi$ is  $\beta\gamma^{\infty}$. By Proposition \ref{prop:tight.filters}, $\xi$ is the unique element of $\ftight$ with associated path $\beta\gamma^{\infty}$ so that $(\{t\}\times\ftight)\cap \text{Iso}(\mathbb{F} \ltimes_{\Delta} \ftight)=\{(t,\xi)\}$. Since $(t,\xi)\in \text{Iso}(\mathbb{F} \ltimes_{\Delta} \ftight)^0$, there exists an open set $U$ of $\mathbb{F} \ltimes_{\Delta} \ftight$ such that $(t,\xi)\in U\scj \text{Iso}(\mathbb{F} \ltimes_{\Delta} \ftight)$. This implies that $\{(t,\xi)\}=(\{t\}\times\ftight)\cap U$ is open and so is its projection in the second coordinate $\{\xi\}$. By Lemma \ref{lem:isolated.vs.exits}, $\gamma$ has no exits.

Suppose now that there exist a path $\beta$ and a loop without exits $\gamma$ such that $t=\beta\gamma\beta^{-1}$ or $t=\beta\gamma^{-1}\beta^{-1}$, and $\xi$ is the element of $\ftight$ associated to $\beta\gamma^{\infty}$. Notice that $\varphi_t(\xi)=\xi$ so that $(t,\xi)\in\text{Iso}(\mathbb{F} \ltimes_{\Delta} \ftight)$. By Lemma~\ref{lem:isolated.vs.exits}, $\{\xi\}$ is open in $\ftight$, and hence $\{(t,\xi)\}$ is an open neighborhood of $(t,\xi)$ inside $\text{Iso}(\mathbb{F} \ltimes_{\Delta} \ftight)$, from where the result follows.
\end{proof}

Since the unit space $(\mathbb{F} \ltimes_{\Delta} \ftight)^{(0)}$ of $\mathbb{F} \ltimes_{\Delta} \ftight$ is identified with $\ftight$, it follows that $A_R((\mathbb{F} \ltimes_{\Delta} \ftight)^{(0)})\cong\Lc(\ftight,R)$. 

\begin{proposition} \label{prop:diag.alg.isom}
The diagonal algebra $D(L_R(\G))$ is isomorphic to $\Lc(\ftight,R)$ and the diagonal C*-algebra $D(C^*(\G))$ is isomorphic to  $C_0(\ftight)$.
\end{proposition}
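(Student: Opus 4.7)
The plan is to show that the ring isomorphism $\kappa\colon L_R(\G)\to A_R(\mathbb{F}\ltimes_\Delta\ftight)$ of Theorem~\ref{kart} restricts to an isomorphism between $D(L_R(\G))$ and $A_R((\mathbb{F}\ltimes_\Delta\ftight)^{(0)})\cong\Lc(\ftight,R)$, and to run the parallel argument with $\kappa_*\colon C^*(\G)\to C^*(\mathbb{F}\ltimes_\Delta\ftight)$ from Theorem~\ref{thm:isom.cstar.algs} in the C*-setting. The inclusion $\kappa(D(L_R(\G)))\subseteq \Lc(\ftight,R)$ is immediate: applying equation~\eqref{eqn:isom.general.products} with $\alpha=\beta$ gives $\kappa(s_\alpha p_A s_\alpha^*)=1_{\{\eword\}\times V_{(\alpha,A\cap r(\alpha),\alpha)}}$, which is supported on the unit space, and injectivity on the diagonal is automatic from Theorem~\ref{kart}. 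The work is therefore to prove surjectivity.

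For surjectivity, I first recall that $\ftight$ is a second-countable, locally compact Hausdorff space with basis of compact opens $\{V_{\mathbf{e}}:\mathbf{e}\in E(S)^+\}$ from Section~\ref{subsection:filters.E(S)}. Hence $\Lc(\ftight,R)$ is the $R$-module spanned by $\{1_{V_{\mathbf{e}}}\}$, and by inclusion-exclusion combined with $V_e\cap V_{e'}=V_{ee'}$, it reduces to showing that $1_{V_e}\in\kappa(D(L_R(\G)))$ for every $e=(\alpha,B,\alpha)\in E(S)$. By Lemma~\ref{lem:description.B}, we may write $B=\bigcup_{i=1}^n(A_i\setminus C_i)$ with $A_i,C_i\in\G^0$ and $A_i,C_i\subseteq r(\alpha)$; after replacing each $C_i$ by $A_i\cap C_i\in\G^0$ we may further assume $C_i\subseteq A_i$.

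The key observation is that the assignment $D\mapsto V_{(\alpha,D,\alpha)}$ from $\acfra$ to subsets of $\ftight$ is a Boolean-algebra homomorphism: it preserves intersections by the product rule in $E(S)$, and it preserves unions and relative complements because the filter $\xi_{|\alpha|}$ of every tight filter $\xi\in V_{(\alpha,D,\alpha)}$ is an ultrafilter in $\acfra$ by Theorem~\ref{thm:TightFiltersType}, hence prime by Proposition~\ref{prop:ultrafilter}. Therefore $1_{V_{(\alpha,A_i\setminus C_i,\alpha)}}=1_{V_{(\alpha,A_i,\alpha)}}-1_{V_{(\alpha,C_i,\alpha)}}$, and a standard inclusion-exclusion expresses $1_{V_e}$ as an alternating sum of products of such characteristic functions. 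Since each $1_{V_{(\alpha,A_i,\alpha)}}=\kappa(s_\alpha p_{A_i}s_\alpha^*)$ and $1_{V_{(\alpha,C_i,\alpha)}}=\kappa(s_\alpha p_{C_i}s_\alpha^*)$ already lies in $\kappa(D(L_R(\G)))$, so does $1_{V_e}$, proving surjectivity.

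For the C*-algebraic statement, the analogous use of equation~\eqref{eqn:isom.cstar.generators} yields $\kappa_*(D(C^*(\G)))\subseteq C_0(\ftight)$, and the argument above shows that the image contains $1_{V_e}$ for every $e\in E(S)$, hence the entire $*$-subalgebra of compactly supported locally constant functions, which is norm-dense in $C_0(\ftight)$. Because $D(C^*(\G))$ is a C*-subalgebra of $C^*(\G)$, its image under $\kappa_*$ is norm-closed, giving the desired equality $\kappa_*(D(C^*(\G)))=C_0(\ftight)$. The main obstacle is the Boolean-algebra bookkeeping: checking that algebraic operations on the generators $s_\alpha p_A s_\alpha^*$ faithfully realize every Boolean combination of the sets $V_{(\alpha,D,\alpha)}$ for $D\in\acfra$, which hinges on the primeness of the ultrafilters $\xi_{|\alpha|}$ in tight filters together with the fact that, although $\G^0$ is not closed under relative complements, its relative complements do lie in $\acf$ by Lemma~\ref{lem:description.B}.
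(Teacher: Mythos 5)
Your argument is correct, and the containment $\kappa(D(L_R(\G)))\scj \Lc(\ftight,R)$ via equation~\eqref{eqn:isom.general.products} with $\alpha=\beta$ is exactly what the paper does; the difference lies in how surjectivity is obtained. The paper imports it wholesale from Section~4: Propositions~\ref{prop:isom.ideals} and~\ref{prop:iso.dual.D.with.tight} (which rest on the Stone duality for $\widehat{\acfg{D}}$ and the density of the set $Y$ in $\ftight$) already give $\Lc(\ftight,R)=\mathrm{span}\{1_{V_{(\alpha,A,\alpha)}}:\alpha\in\awstar,\ A\in\acfra\}$, and the proof ends by observing that $\kappa$ sends generators to generators. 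You instead prove the spanning statement from scratch inside $\ftight$: you decompose compact opens into basic sets $V_{\mathbf{e}}$, reduce these by inclusion--exclusion and the identity $V_e\cap V_f=V_{ef}$ to single sets $V_{(\alpha,B,\alpha)}$, and then use Lemma~\ref{lem:description.B} together with the fact that $D\mapsto V_{(\alpha,D,\alpha)}$ respects unions and relative complements --- which, as you correctly note, follows from the primeness (Proposition~\ref{prop:ultrafilter}) of the ultrafilters occurring in tight filters (Theorem~\ref{thm:TightFiltersType}, plus Lemma~\ref{lem:ultrafilter.equals.principal} for the intermediate filters) --- to land back on generators $s_\alpha p_{A}s_{\alpha^*}$ with $A\in\G^0$. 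This is more self-contained and has the virtue of making explicit the passage from $\acf$ back to $\G^0$, which the paper handles only implicitly through Proposition~\ref{prop:isom.leavitt.path.algs} and Lemma~\ref{lem:boolean.algebra.function}. For the C*-statement the paper simply cites \cite[Theorem 6.9]{MR3680957}, whereas your density-plus-closed-range argument proves it directly (and mirrors what the paper itself does later in the C*-half of Proposition~\ref{viajar}). Both routes are sound; yours trades reliance on the Section~4 machinery for some explicit Boolean bookkeeping.
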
 
    \begin{proof}
    The C*-algebra case is proved in \cite[Theorem 6.9]{MR3680957}. 
    
    We show that $D(L_R(\G))\cong \Lc(\ftight,R)$. Let $D$ be the $R$-algebra defined in (\ref{dfn:D.genrating.set}). By Proposition~\ref{prop:isom.ideals}, $\Lc(\ftight,R)$ is ismorphic to $D$ and, by Lemma~\ref{prop:iso.dual.D.with.tight}, this isomorphism sends  $1_{Y_{(a,A,a)}}\in D \mapsto 1_{V_{(a,A,a)}}\in \Lc(\ftight,R)$ for every $A\in \G^{(0)}$ and $\alpha\in \awstar$. Hence, 
    \[\Lc(\ftight,R)=\mathrm{span}\{1_{V_{(\alpha,A,\alpha)}}: \alpha\in \awstar,A\in \acf_{\alpha}\}.  \]
    Let $\kappa$ be the isomorphism given in Theorem~\ref{kart} and fix an $\alpha\in \awstar$ and an $A\in \acf$. Then, it follows from Equation~(\ref{eqn:isom.general.products}) that
    \[\kappa(s_{\alpha}p_{A}s_{\alpha^*})= 1_{\{\eword\}\times V_{(\alpha,A,\alpha)}}.\]
    That is, $1_{V_{(\alpha,A,\alpha)}}\in A_{R}(\{\eword\}\times \ftight) =A_R((\mathbb{F} \ltimes_{\Delta} \ftight)^{(0)})$.
    By identifying $(\mathbb{F} \ltimes_{\Delta} \ftight)^{(0)}$ with $\ftight$, it follows  that $1_{V_{(\alpha,A,\alpha)}}$ is a function in $\Lc(\ftight,R)$. Then, since $\kappa$ maps the generators of $D(L_R(\G))$ onto a generating set of $\Lc(\ftight,R)$, we have that $D(L_R(\G))\cong \Lc(\ftight,R)$. 
    \end{proof}


\begin{proposicao}\label{viajar}
 Let $R$ be a unital commutative ring. The abelian core $M(L_R(\G))$ is isomorphic to $A_R(\text{Iso}(\mathbb{F} \ltimes_{\Delta} \ftight)^0)$, and the abelian core  $M(C^*(\G))$ is isomorphic to $ C^*(\text{Iso}(\mathbb{F} \ltimes_{\Delta} \ftight)^0)$.
\end{proposicao}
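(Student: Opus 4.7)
The plan is to invoke the isomorphisms $\kappa$ from Theorem~\ref{kart} and $\kappa_*$ from Theorem~\ref{thm:isom.cstar.algs} and reduce both statements to showing that these isomorphisms restrict to isomorphisms of the abelian cores onto $A_R(\text{Iso}(\mathbb{F}\ltimes_\Delta\ftight)^0)$ and $C^*(\text{Iso}(\mathbb{F}\ltimes_\Delta\ftight)^0)$ respectively. Since $\kappa$ and $\kappa_*$ are already isomorphisms, injectivity is automatic; the content is to match generators on both sides.

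For the forward inclusion $\kappa(M(L_R(\G)))\subseteq A_R(\text{Iso}(\mathbb{F}\ltimes_\Delta\ftight)^0)$, I would compute $\kappa$ explicitly on the three types of generators in $G_\G^M$ using Eq.~(\ref{eqn:isom.general.products}). Type~1 generators (with $\alpha=\beta$) map to $1_{\{\eword\}\times V_{(\alpha,A,\alpha)}}$, supported on the unit space $\{\eword\}\times\ftight\subseteq \text{Iso}(\mathbb{F}\ltimes_\Delta\ftight)^0$. For Types~2 and 3, the key preliminary observation is that a loop $\lambda$ without exits satisfies $r(\lambda_i)=\{s(\lambda_{i+1})\}$ for every $i$ (any other vertex in $r(\lambda_i)$ would be either a sink or the source of an edge distinct from $\lambda_{i+1}$, i.e., an exit), forcing $r(\lambda)=\{s(\lambda)\}$ to be a singleton. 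Combined with Lemma~\ref{lem:isolated.vs.exits}, the image of a Type~2 generator $s_{\beta\lambda}p_As_{\beta^*}$ reduces to $1_{\{(\beta\lambda\beta^{-1},\xi)\}}$, where $\xi$ is the unique tight filter with associated path $\beta\lambda^{\infty}$, and Lemma~\ref{lem:interior.isotropy} places this singleton in $\text{Iso}(\mathbb{F}\ltimes_\Delta\ftight)^0$; Type~3 generators produce $1_{\{(\beta\lambda^{-1}\beta^{-1},\xi)\}}$ analogously.

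For the reverse inclusion I would analyze the compact open bisections of $\text{Iso}(\mathbb{F}\ltimes_\Delta\ftight)^0$: since $\mathbb{F}$ is discrete, any such $K$ decomposes as a finite disjoint union $K=\bigsqcup_i(\{t_i\}\times K_i)$ with each $K_i\subseteq\ftight$ compact open. When $t_i=\eword$, any such $K_i$ is allowed, and its characteristic function lies in the image by Proposition~\ref{prop:diag.alg.isom} applied to the Type~1 generators, which already exhaust $\kappa(D(L_R(\G)))\cong\Lc(\ftight,R)$. When $t_i\neq\eword$, Lemma~\ref{lem:interior.isotropy} forces $t_i=\beta\lambda^{\pm m}\beta^{-1}$ for some loop without exits $\lambda$, and constrains $K_i$ to equal the singleton containing the unique tight filter with path $\beta\lambda^{\infty}$; since $\lambda^m$ is itself a loop without exits for every $m\geq 1$, such singletons arise from Type~2 (for $m>0$) or Type~3 (for $m<0$) generators, yielding $\kappa(M(L_R(\G)))=A_R(\text{Iso}(\mathbb{F}\ltimes_\Delta\ftight)^0)$.

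The C*-algebraic statement runs in parallel for $\kappa_*$: the same combinatorial argument identifies the generators, and since $\text{Iso}(\mathbb{F}\ltimes_\Delta\ftight)^0$ is ample, $C^*(\text{Iso}(\mathbb{F}\ltimes_\Delta\ftight)^0)$ is the norm closure of the span of the characteristic functions of its compact open bisections, so the C*-result follows by taking closures. The main obstacle is the combinatorial bookkeeping on the compact open bisections of the interior of the isotropy, where Lemma~\ref{lem:interior.isotropy} is indispensable for guaranteeing that every off-unit piece splits into the basic singleton shapes produced by the Type~2 and Type~3 generators.
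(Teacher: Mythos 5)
Your proposal is correct and follows essentially the same route as the paper: compute $\kappa$ (and $\kappa_*$) on the three types of generators via Equation~(\ref{eqn:isom.general.products}), using that a loop without exits forces $r(\alpha)\cap r(\beta)\cap A=\{s(\gamma)\}$ together with Lemma~\ref{lem:isolated.vs.exits} to reduce Types~2 and~3 to singleton characteristic functions, and then reverse the inclusion by decomposing a compact open bisection of $\text{Iso}(\mathbb{F}\ltimes_{\Delta}\ftight)^0$ into a unit-space part (handled by Proposition~\ref{prop:diag.alg.isom}) plus finitely many isolated off-unit points supplied by Lemma~\ref{lem:interior.isotropy}. The C*-case by density and continuity of $\kappa_*$ is also exactly the paper's argument.
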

\begin{proof}

We first show that $M(L_R(\G))\cong A_R(\text{Iso}(\mathbb{F} \ltimes_{\Delta} \ftight)^0)$. Let $\kappa$ denote the isomorphism in Theorem~\ref{kart}. By Proposition~\ref{prop:diag.alg.isom}, we have that $\kappa(D(L_R(\G)))\subset A_R(\text{Iso}(\mathbb{F} \ltimes_{\Delta} \ftight)^0)$. Let $\alpha,\beta\in\awstar$ and $A\in \acf$ such that $r(\alpha) \cap r(\beta) \cap A\neq \emptyset$ and $\alpha=\beta\gamma$ for some  loop $\gamma\in \awstar$ without exits. Notice in this case that $r(\alpha)=\{s(\gamma)\}$ so that $r(\alpha)\cap r(\beta)\cap A=\{s(\gamma)\}$.
Then, by Lemma~\ref{lem:isolated.vs.exits}, $\xi^{\beta\gamma^\infty}$ is an isolated point and 
\begin{equation} \label{eqn:singleton}
    V_{(\alpha, r(\alpha) \cap r(\beta) \cap A, \alpha)}=V_{(\beta\gamma^n, r(\alpha) \cap r(\beta) \cap A, \beta\gamma^n)}=\{\xi\}, 
\end{equation}
for all $n\geq 0$.
Therefore, by (\ref{eqn:isom.general.products}) and (\ref{eqn:singleton}), we have that   
    \begin{equation}
     \begin{split}
      \kappa(s_{\alpha}p_A s_{\beta^*})&= 1_{\{\alpha\beta^{-1}\}\times V_{(\alpha,r(\alpha)\cap r(\beta)\cap A,\alpha)}}  =1_{\{(\alpha\beta^{-1},\xi)\}}, \text{ and } \\
      \kappa(s_{\beta}p_A s_{\alpha^*})&= 1_{\{\beta\alpha^{-1}\}\times V_{(\beta,r(\alpha)\cap r(\beta)\cap A,\beta)}} =1_{\{(\beta\alpha^{-1},\xi)\}} 
     \end{split}
    \end{equation} 
Hence, $\kappa(M(L_R(\G)))\subseteq A_R(\text{Iso}(\mathbb{F} \ltimes_{\Delta} \ftight)^0)$. 

To see that $A_R(\text{Iso}(\mathbb{F} \ltimes_{\Delta} \ftight)^0)\subseteq \kappa(M(L_R(\G)))$, let $U\scj \text{Iso}(\mathbb{F} \ltimes_{\Delta} \ftight)^0$ be a compact-open bisection. Then, by Lemma~\ref{lem:interior.isotropy}, 
\[U=V\cup\{(t_1,\xi_1),\ldots,(t_m,\xi_m)\},\] 
where $V$ is a compact-open subset of $\Gamma^{(0)}$ and $(t_i,\xi_i)\in \text{Iso}(\mathbb{F} \ltimes_{\Delta} \ftight)^0\setminus (\mathbb{F} \ltimes_{\Delta} \ftight)^{(0)}$ is an isolated point.  By Proposition~\ref{prop:diag.alg.isom}, we have that $\kappa^{-1}(V)\subset M(L_R(\G))$. By Lemma~\ref{lem:interior.isotropy}, for each $i=1,\ldots,n$, the labelled path associated to $\xi_i$ is of the form $\beta_i\gamma_i^{\infty}$ for some loop $\gamma_i$ without exists and   $t_i=\beta_i\gamma_i\beta_i^{-1}$ or $t_i=\beta_i \gamma_i^{-1}\beta_i^{-1}$. Put $\alpha_i=\beta_i\gamma_i$. Then $r(\alpha_i)\cap r(\beta_i)=\{s(\gamma_i)\}\neq \emptyset$. Hence, $s_{\alpha_i}p_{r(\alpha_i)}s_{\beta_i^*},s_{\beta_i}p_{r(\alpha_i)}s_{\alpha_i^*}\in  M(L_R(\G))$. Now, if $t_i=\beta_i\gamma_i\beta_i^{-1}$, then 
\[      \kappa(s_{\alpha_i}p_{r(\alpha_i)}s_{\beta_i^*})= 1_{\{\alpha_i\beta_i^{-1}\}\times V_{(\alpha_i,r(\alpha_i)\cap r(\beta_i),\alpha_i)}}  =1_{\{(\alpha_i\beta_i^{-1},\xi_i)\}}=1_{\{(t_i,\xi_i)\}}, \]
and if $t_i=\beta_i\gamma_i^{-1}\beta_i^{-1}$, then 
 \[     \kappa(s_{\beta_i}p_{r(\alpha_i)}s_{\alpha_i^*})= 1_{\{\beta_i\alpha_i^{-1}\}\times V_{(\beta_i,r(\alpha_i)\cap r(\beta_i),\beta_i)}} =1_{\{(\beta_i\alpha_i^{-1},\xi_i)\}}=1_{\{(t_i,\xi_i)\}}.\] 
Thus, $\kappa^{-1}(A_R(\text{Iso}(\mathbb{F} \ltimes_{\Delta} \ftight)^0))\subseteq M(L_R(\G))$, which completes the proof that $M(L_R(\G))\cong A_R(\text{Iso}(\mathbb{F} \ltimes_{\Delta} \ftight)^0)$.

Next we show that $M(C^*(\G))$ is isomorphic to $ C^*(\text{Iso}(\mathbb{F} \ltimes_{\Delta} \ftight)^0)$. Let $\kappa_*$ be the isomorphism given in (\ref{eqn:isom.cstar.generators}). Then, the same arguments as for the algebraic case above imply that $\kappa_*$ maps a dense *-subalgebra of $M(C^*(\G))$ onto a dense *-subalgebra of $C^*(\text{Iso}(\mathbb{F}\rtimes_{\Delta}\ftight)^0)$. Since $\kappa_*$ is continuous, we may extend it to a *-isomorphism of $M(C^*(\G))$ onto $C^*(\text{Iso}(\mathbb{F}\rtimes_{\Delta}\ftight)^0)$.

\end{proof}

As a first consequence of the above proposition we obtain a generalized uniqueness theorem for ultragraph C*-algebras (this generalizes the graph C*-algebra version given in \cite[Theorem~3.13]{saragab}).

\begin{teorema}\label{pizza}
For a *-homomorphism $\pi:C^*(\G)\rightarrow A$, the  following conditions are
equivalent:
\begin{enumerate}
    \item $\pi$ is injective;
    \item the restriction of $\pi$ to $M(C^*(\G))$ is injective;
    \item $\pi(p_A)\neq 0$ for all nonempty $A\in \G^0$ and, for any simple loop $\alpha$ without exist, the spectrum of $\pi(s_\alpha)$ contains the unit circle.
\end{enumerate}
\end{teorema}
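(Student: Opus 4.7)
The equivalence $(1) \Leftrightarrow (2)$ follows almost immediately from the groupoid machinery developed in the previous sections: Theorem~\ref{thm:isom.cstar.algs} identifies $C^*(\G)$ with $C^*(\mathbb{F}\ltimes_\Delta \ftight)$, Proposition~\ref{viajar} identifies $M(C^*(\G))$ with $C^*(\text{Iso}(\mathbb{F}\ltimes_\Delta \ftight)^0)$, and since $\mathbb{F}\ltimes_\Delta \ftight$ is an étale Hausdorff groupoid, Theorem~\ref{thm:uniqueness} (BNRSW) transfers injectivity between the two. The implication $(1) \Rightarrow (2)$ is, of course, also trivial by inclusion.

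For $(2) \Rightarrow (3)$: each $p_A$ with $A \in \G^0$ sits in $M(C^*(\G))$, so injectivity gives $\pi(p_A) \neq 0$ for nonempty $A$. For a simple loop $\alpha = \alpha_1 \cdots \alpha_n$ without exits based at $v$, each $s(\alpha_i)$ emits only $\alpha_i$, so the Cuntz--Krieger relation gives $s_{\alpha_i} s_{\alpha_i}^* = p_{s(\alpha_i)}$; telescoping yields $s_\alpha s_\alpha^* = p_v = s_\alpha^* s_\alpha$, so $s_\alpha$ is a unitary in the corner $p_v C^*(\G) p_v$. The subalgebra $C^*(s_\alpha, p_v) \cong C(\T)$ (the spectrum of $s_\alpha$ is all of $\T$ by gauge invariance) is contained in $M(C^*(\G))$, since every power of $s_\alpha$ and $s_\alpha^*$ belongs to $G_\G^M$. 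Injectivity of $\pi$ restricted to this copy of $C(\T)$ forces $\sigma(\pi(s_\alpha)) = \T$.

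The hard direction is $(3) \Rightarrow (2)$. By Lemma~\ref{lem:interior.isotropy}, every non-unit element of $\text{Iso}(\mathbb{F}\ltimes_\Delta \ftight)^0$ is isolated and parametrized by a path $\beta$ and a simple loop $\gamma$ without exits. This yields a partition of $\text{Iso}(\mathbb{F}\ltimes_\Delta \ftight)^0$ into clopen subgroupoids $(\ftight \setminus P) \sqcup \bigsqcup_{\xi \in P} \text{Iso}_\xi$, where $P = \{\xi^{\beta\gamma^\infty}\}$ is the discrete set of isolated periodic points and each $\text{Iso}_\xi \cong \Z$. Passing to C*-algebras gives an orthogonal decomposition
\[
M(C^*(\G)) \cong C_0(\ftight \setminus P) \oplus \bigoplus^{c_0}_{\xi \in P} C(\T),
\]
which reduces injectivity to a summand-by-summand check. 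For the first summand, a basis of the topology of $\ftight$ is $\{V_{(\alpha, \{v\}, \alpha)} : \alpha \in \awstar,\ v \in r(\alpha) \cap G^0\}$, and the identity $s_\alpha^*(s_\alpha p_{\{v\}} s_\alpha^*)s_\alpha = p_{\{v\}}$ combined with $\pi(p_{\{v\}}) \neq 0$ (since $\{v\} \in \G^0$) forces $\pi(s_\alpha p_{\{v\}} s_\alpha^*) \neq 0$, which gives injectivity on $C_0(\ftight \setminus P)$. For each $C(\T)$-summand at $\xi^{\beta\gamma^\infty}$, the generator is $s_\beta s_\gamma s_\beta^*$; since $b \mapsto s_\beta b s_\beta^*$ is a spectrum-preserving $*$-isomorphism between the corners $p_v C^*(\G) p_v$ and $(s_\beta p_v s_\beta^*)C^*(\G)(s_\beta p_v s_\beta^*)$, the hypothesis $\sigma(\pi(s_\gamma)) = \T$ transfers to $\sigma(\pi(s_\beta s_\gamma s_\beta^*)) = \T$, giving injectivity on this summand. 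Orthogonality of the clopen pieces assembles summand-wise injectivity into injectivity on the whole of $M(C^*(\G))$. The main obstacle is justifying the direct sum decomposition rigorously, i.e.\ verifying that each $(\ftight \setminus P)$ and $\text{Iso}_\xi$ really is a clopen subgroupoid of $\text{Iso}(\mathbb{F}\ltimes_\Delta \ftight)^0$ (the key input being that the points of $P$ are isolated in $\ftight$, by Lemma~\ref{lem:isolated.vs.exits}); once this is in place, the remaining verification is routine.
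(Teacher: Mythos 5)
Your proof of the equivalence $(1)\Leftrightarrow(2)$ is exactly the paper's: it invokes Theorem~\ref{thm:isom.cstar.algs}, Proposition~\ref{viajar} and the groupoid uniqueness theorem (Theorem~\ref{thm:uniqueness}), and that part is fine. Where you diverge is $(1)\Leftrightarrow(3)$: the paper does not prove this at all, but cites it from an external reference (\cite[Theorem~7.4]{MR3554458}), whereas you attempt a direct proof. Your sketch of $(2)\Rightarrow(3)$ is essentially correct ($r(\alpha_i)=\{s(\alpha_{i+1})\}$ for a loop without exits, so $s_\alpha$ is a unitary in $p_vC^*(\G)p_v$ generating a copy of $C(\T)$ inside $M(C^*(\G))$). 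The problem is the direction $(3)\Rightarrow(2)$.

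The first genuine gap is the claimed clopen partition. The set $P$ of isolated points $\xi^{\beta\gamma^\infty}$ is open in $\ftight$ but need not be closed: a sequence of distinct isolated periodic points can converge to a non-isolated point of finite type. For instance, take an infinite emitter $v_0$ with edges $e_n$ from $v_0$ to $v_n$ and a loop $f_n$ without exits at each $v_n$; then the isolated points $\xi^{e_nf_n^\infty}$ converge to the finite-type tight filter at $v_0$, which is not in $P$ (compare the convergence phenomenon in Example~\ref{picapau}). Hence $\ftight\setminus P$ is not open, $C_0(\ftight\setminus P)$ is only a quotient of $M(C^*(\G))\cong C^*(\text{Iso}(\mathbb{F}\ltimes_\Delta\ftight)^0)$ (one gets an extension of $C_0(\ftight\setminus P)$ by $\bigoplus_{\xi\in P}C(\T)$, not a direct sum), and the ``summand-by-summand'' reduction collapses. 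What survives is only that $\ker(\pi|_M)$ meets each fiber algebra $C(\T_\xi)$ trivially; to conclude one still has to rule out a nonzero kernel supported on the interior of $\ftight\setminus P$, i.e.\ one still needs injectivity of $\pi$ on the diagonal $C_0(\ftight)$.

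That is the second gap: your argument for injectivity on the diagonal is based on the assertion that $\{V_{(\alpha,\{v\},\alpha)}:\alpha\in\awstar,\ v\in r(\alpha)\}$ is a basis for $\ftight$. It is not — these sets do not even cover $\ftight$ when ranges of edges are infinite (in Example~\ref{picapau} the filter $(e_0,\ft)$ with $\ft$ the cofinite filter on $r(e_0)$ contains no singleton, so it lies in no such set). The actual compact open sets are the $V_{e:e_1,\ldots,e_n}$ with $e_i\in E(S)$ and $A\in\acf$ arbitrary, and showing $\pi(1_U)\neq 0$ for every nonempty such $U$ using only the hypothesis $\pi(p_A)\neq 0$ for $A\in\G^0$ is precisely the nontrivial Cuntz--Krieger-type argument that the cited Theorem~7.4 of \cite{MR3554458} supplies; the identity $s_\alpha^*(s_\alpha p_{\{v\}}s_\alpha^*)s_\alpha=p_{\{v\}}$ does not reach these sets. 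So the equivalence $(1)\Leftrightarrow(2)$ in your write-up stands, but the direct proof of $(3)\Rightarrow(2)$ is not salvageable as written; either repair both points above or, as the paper does, quote the external uniqueness theorem for $(1)\Leftrightarrow(3)$.
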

\begin{proof}
That 1. is equivalent to 2. follows from Theorem~\ref{thm:uniqueness} and Proposition~\ref{viajar}. The equivalence between 1 and 3 is given in \cite[Theorem~7.4]{MR3554458}.
\end{proof}

\begin{remark}
A version of the above theorem has been proved for higher rank graphs in \cite{BROWN20142590}.
\end{remark}

We now extend the generalized uniqueness theorem for Leavitt path algebras (\cite[Theorem~5.2]{GILCANTO2018227} to ultragraph Leavitt path algebras.

 \begin{teorema}\label{uniqueultra} Let $\G$ be an ultragraph and $R$ be a unital commutative ring. Consider $\Phi: L_R(\G) \rightarrow \mathcal{A}$ a ring homomorphism. Then  $\Phi$ is injective if, and only if,  the restriction of $\Phi$ to $M(L_R(\G))$ is injective.
\end{teorema}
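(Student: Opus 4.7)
The plan is to deduce Theorem~\ref{uniqueultra} as a direct application of the generalized uniqueness theorem for Steinberg algebras (Theorem~\ref{thm:Uniqueness}) via the identifications provided by Theorem~\ref{kart} and Proposition~\ref{viajar}. Concretely, I will transport the homomorphism $\Phi$ to a ring homomorphism on $A_R(\mathbb{F}\ltimes_\Delta\ftight)$ and check that, under this transport, the restriction to $M(L_R(\G))$ corresponds to the restriction to the algebra of the interior of the isotropy bundle.

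First, I would verify that the groupoid $\mathbb{F}\ltimes_\Delta\ftight$ meets the hypotheses of Theorem~\ref{thm:Uniqueness}. Hausdorffness and ampleness were already invoked in the proof of Theorem~\ref{kart} (via \cite[Lemma~5.4]{MR4109095}); second countability follows from the fact that $E(S)$ is countable for an ultragraph, so $\ftight$ has a countable basis (the sets $V_{\mathbf{e}}$), and the set of elements $t\in\mathbb{F}$ yielding nonempty $V_t$ is countable as well. Hence Theorem~\ref{thm:Uniqueness} applies to $\mathcal{H}=\mathbb{F}\ltimes_\Delta\ftight$.

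Next, let $\kappa:L_R(\G)\to A_R(\mathbb{F}\ltimes_\Delta\ftight)$ be the isomorphism of Theorem~\ref{kart}, and define $\widetilde\Phi=\Phi\circ\kappa^{-1}$, so that $\Phi$ is injective if and only if $\widetilde\Phi$ is. By Theorem~\ref{thm:Uniqueness}, $\widetilde\Phi$ is injective if and only if $\widetilde\Phi\circ\iota$ is injective, where $\iota\colon A_R(\mathrm{Iso}(\mathbb{F}\ltimes_\Delta\ftight)^0)\hookrightarrow A_R(\mathbb{F}\ltimes_\Delta\ftight)$ is the inclusion. The key bookkeeping step is to observe that $\kappa$ restricts to the isomorphism $M(L_R(\G))\xrightarrow{\sim} A_R(\mathrm{Iso}(\mathbb{F}\ltimes_\Delta\ftight)^0)$ of Proposition~\ref{viajar}; this is precisely how that isomorphism was constructed, using the explicit formulas~\eqref{eqn:isom.general.products} on generators $s_\alpha p_A s_{\beta^*}\in G^M_\G$. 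Consequently, the composition $\widetilde\Phi\circ\iota$ equals $\Phi|_{M(L_R(\G))}$ transported by the restricted isomorphism, and injectivity of one is equivalent to injectivity of the other.

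The main (and only) obstacle is the compatibility check in the previous paragraph, namely that $\kappa(M(L_R(\G)))=A_R(\mathrm{Iso}(\mathbb{F}\ltimes_\Delta\ftight)^0)$ as \emph{subalgebras} of $A_R(\mathbb{F}\ltimes_\Delta\ftight)$, so that $\iota$ corresponds under $\kappa$ to the inclusion $M(L_R(\G))\hookrightarrow L_R(\G)$. But this is exactly the content of Proposition~\ref{viajar} together with formula~\eqref{eqn:isom.general.products}, so no extra work is needed. Chaining the two equivalences (Theorem~\ref{kart} plus Theorem~\ref{thm:Uniqueness} plus Proposition~\ref{viajar}) then yields the statement: $\Phi$ is injective iff $\Phi|_{M(L_R(\G))}$ is injective.
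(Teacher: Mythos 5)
Your proposal is correct and follows exactly the paper's own route: the paper's proof is the one-line "This follows from Theorem~\ref{thm:Uniqueness} and Proposition~\ref{viajar}," and you have simply made explicit the hypotheses check on $\mathbb{F}\ltimes_\Delta\ftight$ and the compatibility of $\kappa$ with the inclusion of the abelian core, which is indeed how Proposition~\ref{viajar} is constructed.
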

\begin{proof}
This follows from Theorem~\ref{thm:Uniqueness} and Proposition~\ref{viajar}.
\end{proof}

We finish the paper applying the the results of \cite{HazratLi} to obtain an extension of \cite[Theorem~4.13]{GILCANTO2018227} to ultragraph Leavitt path algebra, and to describe when the core of an ultragraph Leavitt path algebra is equal to the center of the algebra.

\begin{corolario}
Let $\G$ be an ultragraph and $L_R(\G)$ be the ultragraph Leavitt path algebra associated to $\G$. Then 

\begin{enumerate}[\upshape(i)]

\item The centraliser of the diagonal algebra $D(L_R(\G))$ is the core algebra $M(L_R(\G))$. 

\item The core algebra $M(L_R(\G))$ is a maximal commutative subalgebra of $L_R(\G)$. 

\item If $Z(L_R(\G))= M(L_R(\G))$ and $\G$ is connected, then $L_R(\G)$ is either $R$ or $R[x,x^{-1}]$, i.e, the ultragraph $\G$ is either a single vertex or a vertex and an edge.

\end{enumerate}
\end{corolario}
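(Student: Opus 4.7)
The plan is to transport each statement to the Steinberg algebra $A_R(\mathbb{F}\ltimes_{\Delta}\ftight)$ via the isomorphism $\kappa$ of Theorem~\ref{kart}. Under $\kappa$, the diagonal $D(L_R(\G))$ corresponds to $\Lc(\ftight,R) = A_R((\mathbb{F}\ltimes_{\Delta}\ftight)^{(0)})$ by Proposition~\ref{prop:diag.alg.isom}, and the abelian core $M(L_R(\G))$ corresponds to $A_R(\text{Iso}(\mathbb{F}\ltimes_{\Delta}\ftight)^0)$ by Proposition~\ref{viajar}. With these identifications, parts (i) and (ii) follow from general Steinberg algebra results established in \cite{HazratLi}, whereas part (iii) requires an additional argument specific to ultragraphs.

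For (i), I would invoke the Hazrat--Li theorem that, for any ample Hausdorff groupoid $\Gamma$ and any commutative unital ring $R$, the centralizer of $A_R(\Gamma^{(0)})$ inside $A_R(\Gamma)$ is precisely $A_R(\text{Iso}(\Gamma)^0)$. For (ii), I would first observe that $M(L_R(\G))$ is commutative: by Lemma~\ref{lem:interior.isotropy} the fiber of $\text{Iso}(\mathbb{F}\ltimes_{\Delta}\ftight)^0$ over any unit is either trivial or infinite cyclic, generated by an element of the form $\beta\gamma\beta^{-1}$ attached to an exit-free loop, so every isotropy group is abelian and hence $A_R(\text{Iso}^0)$ is commutative. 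Maximality is then purely formal: if $A\subseteq B$ and $C := \mathrm{Cent}_B(A)$ is commutative, then any $b\in B$ commuting with all of $C$ in particular commutes with all of $A\subseteq C$, so $b\in C$; combined with commutativity of $C$ this gives $C$ maximal commutative.

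For (iii), assume $Z(L_R(\G)) = M(L_R(\G))$. Since $D(L_R(\G))\subseteq M(L_R(\G)) = Z(L_R(\G))$, each $p_v$ with $v\in G^0$ is central. A direct computation with the defining relations of $L_R(\G)$ shows that $p_v s_e = s_e p_v$ for every $v\in G^0$ forces $r(e)=\{s(e)\}$: when $v = s(e)$, left-multiplying the identity $s_e p_v = s_e$ by $s_e^*$ yields $p_{r(e)\cap\{v\}} = p_{r(e)}$, i.e., $r(e)\scj\{v\}$; when $v\neq s(e)$ with $v\in r(e)$, one has $p_v s_e = 0$ while $(s_e p_v)^*(s_e p_v) = p_v \neq 0$, a contradiction. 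Thus every edge is a loop at its source with singleton range. If $G^0$ contained distinct vertices $v\neq w$, connectedness would give a finite path from $v$ to $w$, but the ranges of successive edges along any such path remain inside $\{v\}$, contradicting $w\in r(\gamma)$. Hence $G^0 = \{v\}$ is a single vertex and every edge is a loop at $v$ with range $\{v\}$.

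It remains to rule out two distinct loops $e\neq f$ at $v$. In that case $s_e s_e^*$ lies in $M(L_R(\G))$ (it is of the form $s_\alpha p_A s_{\alpha^*}$ with $\alpha = e$), but $(s_e s_e^*)s_f = s_e(s_e^* s_f) = 0$ whereas $s_f(s_e s_e^*) = (s_f s_e)s_e^*$ is non-zero: applying $\kappa$, its support is the non-empty open bisection $\{f\}\times V_{(fe,r(e),fe)}$, coming from the composition of the bisections for $fe$ and $e^{-1}$ in the partial action. This contradicts $M=Z$, so $v$ emits at most one edge. Collecting cases gives $L_R(\G)\cong R$ if $v$ has no edges and $L_R(\G)\cong R[x,x^{-1}]$ (via $x=s_e$) if $v$ has exactly one loop, as claimed. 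The main obstacle I anticipate is precisely this last non-vanishing step: deriving $s_f s_e s_e^*\neq 0$ directly from the defining relations of $L_R(\G)$ is subtle, but becomes routine once one passes through $\kappa$ to $A_R(\mathbb{F}\ltimes_{\Delta}\ftight)$, where non-vanishing reduces to the visibly non-empty open bisection above.
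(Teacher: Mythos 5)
Your proposal is correct. Parts (i) and (ii) follow the paper's route exactly: the paper proves (i) from Proposition~\ref{prop:diag.alg.isom} together with \cite[Theorem~2.2]{HazratLi}, and (ii) from Proposition~\ref{viajar} together with \cite[Corollary~2.3]{HazratLi}; your only addition is to unpack the short formal argument (commutativity of the fibers of $\text{Iso}(\mathbb{F}\ltimes_{\Delta}\ftight)^0$ via Lemma~\ref{lem:interior.isotropy}, plus the standard ``centralizer of a commutative subalgebra that is itself commutative is maximal commutative'' step) rather than citing the corollary as a black box. Part (iii) is where you genuinely diverge. The paper first invokes \cite[Corollary~2.3]{HazratLi} to conclude that $Z=M$ forces $L_R(\G)$ to be \emph{commutative}, and then contradicts commutativity twice: once with $p_w s_\alpha=0\neq s_\alpha p_w$ along a connecting path to get a single vertex, and once with $s_as_b\neq s_bs_a$ (distinct elements $ab\neq ba$ of $\mathbb{F}$) to get at most one edge. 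You instead never pass through global commutativity: you exploit centrality of the specific elements $p_v\in D\scj M=Z$ to prove the local structural fact $r(e)=\{s(e)\}$ for every edge, feed that into connectedness to collapse $G^0$ to a point, and then use centrality of $s_es_e^*\in M$ to exclude a second loop. Both arguments ultimately rest on the same non-vanishing inputs from the groupoid model (Equation~\eqref{eqn:isom.general.products} and Remark~\ref{rmk:Ve.not.empty}); yours is slightly longer but avoids a second appeal to \cite{HazratLi} and makes the role of the defining relations explicit. One small point worth making explicit in your first sub-case: the step from $p_{r(e)\cap\{v\}}=p_{r(e)}$ to $r(e)\scj\{v\}$ requires knowing $p_A\neq 0$ for nonempty $A\in\G^0$ (multiply by $p_{\{w\}}$ for $w\in r(e)\setminus\{v\}$ and invoke Remark~\ref{rmk:Ve.not.empty} or Theorem~\ref{acara}); this is routine and consistent with what you use elsewhere, so it is not a gap, just a line to add.
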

\begin{proof}
(i) It follows from Proposition \ref{prop:diag.alg.isom} and \cite[Theorem 2.2]{HazratLi}.

(ii) It follows from Proposition \ref{viajar} and \cite[Corollary 2.3]{HazratLi}.

(iii) By \cite[Corollary 2.3]{HazratLi}, since the center of $L_R(\G)$ is commutative, we have that $L_R(G)$ itself is commutative. We claim that $\G$ has only one vertex. Indeed, if $v,w\in\G$ are such that $v\neq w$, then there exists a path $\alpha\in\G^*$ such that $s(\alpha)=v$ and $w\in r(\alpha)$. On one hand, we get $p_ws_{\alpha}=0$. On the other hand, $(\alpha,\{w\},\alpha)\in E(S)\setminus\{0\}$, so that, $s_{\alpha}p_w\neq 0$ by \eqref{eqn:isom.general.products} and Remark~\ref{rmk:Ve.not.empty}. Hence $p_ws_{\alpha}\neq s_{\alpha}p_w$, contradicting the commutativity of $L_R(\G)$. We conclude that $G^0$ is a singleton. Now suppose that there are two different edges $a,b\in\G^1$. In this case $ab$ and $ba$ are two different paths on $\G$, which also correspond to two different elements in $\mathbb{F}$. Again, using \eqref{eqn:isom.general.products} and Remark~\ref{rmk:Ve.not.empty}, we conclude that $s_as_b\neq s_bs_a$ contradicting the commutativity of $L_R(\G)$. Therefore, either $\G$ consists of only one vertex and no edges, in which case $L_R(\G)$ is $R$, or $\G$ consists of only a vertex and an edge, in which case $L_R(\G)$ is $R[x,x^{-1}]$.
\end{proof}

\bibliographystyle{abbrv}
\bibliography{ref}

Gilles Gon\c{c}alves de Castro, Departamento de Matem\'atica, Universidade Federal de Santa Catarina, Florian\'opolis, 88040-900, Brazil.

Email: gilles.castro@ufsc.br	

\vspace{0.5pc}

Daniel Gon\c{c}alves, Departamento de Matem\'atica, Universidade Federal de Santa Catarina, Florian\'opolis, 88040-900, Brazil.

Email: daemig@gmail.com

\vspace{0.5pc}

Daniel W van Wyk, Department of Mathematics, Dartmouth College, Hanover, NH 03755-3551, USA.

Email: daniel.w.van.wyk@dartmouth.edu

\end{document}